\documentclass[reqno]{amsart}
\usepackage{amssymb,amsmath,hyperref}
\usepackage{amsrefs, float}
\usepackage[foot]{amsaddr}
\usepackage{bbold,stackrel, multicol}
 
\newcommand{\Z}{{\textsf{\textup{Z}}}}

\newtheorem{thm}{Theorem}

\newtheorem{cor}[thm]{Corollary}
\newtheorem{defi}[thm]{Definition}

\newtheorem{rem}[thm]{Remark}
\newtheorem{nota}[thm]{Notation}

\newtheorem{princ}[thm]{Principle}

\newtheorem{ack}[thm]{Acknowledgement}

\newtheorem*{tempo*}{Template}

\newcommand\be{\begin{equation}}
\newcommand\ee{\end{equation}} 

\usepackage{amsmath,amsfonts} 

\usepackage[applemac]{inputenc}

\def\bdefi{\begin{defi}\rm}
\def\edefi{\end{defi}}
\def\bnota{\begin{nota}\rm}
\def\enota{\end{nota}}

\def\FIVE{\Pi_{1}^{1}\text{-\textup{\textsf{CA}}}_{0}}
\def\SIX{\Pi_{2}^{1}\text{-\textsf{\textup{CA}}}_{0}}

\def\SIXK{\Pi_{k}^{1}\text{-\textsf{\textup{CA}}}_{0}^{\omega}}

\def\ATR{\textup{\textsf{ATR}}}

\def\PHP{\textup{\textsf{PHP}}}
\def\ZF{\textup{\textsf{ZF}}}
\def\ZFC{\textup{\textsf{ZFC}}}

\def\L{\textsf{\textup{L}}}

 \def\r{\mathbb{r}}

\def\RCA{\textup{\textsf{RCA}}}
\def\({\textup{(}}
\def\){\textup{)}}

\def\RCAo{\textup{\textsf{RCA}}_{0}^{\omega}}
\def\ACAo{\textup{\textsf{ACA}}_{0}^{\omega}}

\def\WKL{\textup{\textsf{WKL}}}

\def\WWKL{\textup{\textsf{WWKL}}}
\def\N{{\mathbb  N}}
\def\Q{{\mathbb  Q}}
\def\R{{\mathbb  R}}
\def\A{{\textsf{\textup{A}}}}

\def\SS{\textup{\textsf{S}}}

\def\di{\rightarrow}

\def\asa{\leftrightarrow}

\def\ACA{\textup{\textsf{ACA}}}

\def\QFAC{\textup{\textsf{QF-AC}}}

\def\SAC{\Sigma_{1}^{1}\textup{\textsf{-AC}}_{0}}

\def\BNC{\textup{\textsf{BNC}}}
\def\ENC{\textup{\textsf{ENC}}}

\def\SIND{\Sigma\textup{\textsf{-IND}}}

\def\OC{\textup{\textsf{OC}}}
\def\SSEP{\Sigma\textup{\textsf{-SEP}}}

\def\M{\mathcal{M}}

\def\SAC{\textup{\textsf{$\Sigma_{1}^{1}$-AC$_{0}$}}}

\def\cocode{\textup{\textsf{cocode}}}

\def\NIN{\textup{\textsf{NIN}}}

\def\BCT{\textup{\textsf{BCT}}}

\def\open{\textup{\textsf{open}}}

\def\KL{\textup{\textsf{KL}}}
\def\BOOT{\textup{\textsf{BOOT}}}

\def\IND{\textup{\textsf{IND}}}
\def\NFP{\textup{\textsf{NFP}}}

\def\HBU{\textup{\textsf{HBU}}}
\def\HBT{\textup{\textsf{HBT}}}

\def\WHBU{\textup{\textsf{WHBU}}}

\def\mod{\textup{\textsf{mod}}}

\def\fin{\textup{\textsf{fin}}}

\def\eps{\varepsilon}

\def\X{\textup{\textsf{X}}}

\def\CC{\textup{\textsf{CC}}}

\def\ADS{\textup{\textsf{ADS}}}

\newcommand{\rinf}{\rightarrow \infty}

\usepackage{mathrsfs}  

\usepackage{graphicx}
\usepackage{tikz}
\usetikzlibrary{matrix, shapes.misc}
\usepackage{comment,tikz-cd}

\numberwithin{equation}{section}
\numberwithin{thm}{section}

\begin{document}
\title{Numerical choice, Riemann integration, and Reverse Mathematics}
\author{Sam Sanders}
\address{Department of Philosophy II, RUB Bochum, Germany}
\email{sasander@me.com}
\author{Dmytro Taranovsky}
\email{dmytro@alum.mit.edu}

\keywords{numerical choice, Riemann integration, Reverse Mathematics}
\subjclass[2020]{Primary: 03B30, 03F35}

\begin{abstract}
Riemann integration remains a well-known part of mathematics for both historical and conceptual reasons.  
We study basic properties like boundedness of Riemann integrable functions and related classes in mathematical logic.  
On one hand, weak logical systems already establish that a Riemann integrable function on the unit interval is bounded or dominated by a continuous function.  
On the other hand, the following slight generalisation already implies a rather strong logical system, namely the `Big Five' system $\ATR_{0}$ which accommodates \emph{transfinite recursion}.
\begin{center}
\emph{For $f:\R\di \R$ Riemann integrable on any interval $[-a, a]$ for $a>0$, there is continuous $g:\R\di \R$ with $f(x)\leq g(x)$ for all $x\in \R$.}
\end{center}
As part of the \emph{Reverse Mathematics} program, we obtain equivalences for the centred statement and variations involving the axiom of numerical choice.
A central result is that numerical choice for $\Pi_{1}^{1}$-formulas is equivalent to $\ATR_{0}$.  
We also obtain equivalences involving basic properties of metric spaces and establish connections to Kohlenbach's generalisations of weak K\"onig's lemma, Cousin's lemma, and the representation of open sets.    
\end{abstract}
%
\maketitle              

\section{Introduction}
\subsection{Aim and motivation}\label{intro}
The Riemann integral boasts an impressive historical role and remains an important step in analysis for many high-school and undergraduate students.    
In this paper, we study Riemann integration from the point of view of mathematical logic; the following basic statement is central.
\be\label{diff}\tag{\textsf{RB}}
\begin{array}{c}\textup{\emph{For $f:\R\di \R$ Riemann integrable on any interval $[-a, a]$ for $a>0$,}} \\ \textup{\emph{there is continuous $g:\R\di \R$ with $f(x)\leq g(x)$ for all $x\in \R$.}} \end{array}
\ee
In particular, given the central role of the Riemann integral, the juxtaposition between items (a)-(b) is of general interest to any scientist, and of particular interest to anyone interested
in the logical and computational properties of mathematics. 
\begin{itemize}
\item[(a)] Weak logical systems can prove that a Riemann integrable function on $[0,1]$ is bounded or dominated by a continuous function on $[0,1]$.
\item[(b)] Rather strong logical systems are needed to prove the statement \eqref{diff}.  
\end{itemize}
We also study variations of (a)-(b) for other function classes, like sub-continuity, and basic properties of metric spaces, with similar results. 

\smallskip

A more detailed study of these items proceeds as follows: the  program \emph{Reverse Mathematics} (often abbreviated RM; see \cite{simpson2, damurm, kohlenbach2, stillebron} for an introduction) seeks to identify the minimal axioms 
needed to prove theorems of ordinary mathematics.   As a contribution to RM, we obtain equivalences involving the statement \eqref{diff} and variations, and the axiom of numerical choice.  
A central result is our equivalence between the fragment of numerical choice for $\Pi_{1}^{1}$-formulas and the well-known and relatively strong system $\ATR_{0}$, which accommodates transfinite recursion. 

\smallskip

We discuss some preliminaries and definitions in Section \ref{prelim} while our main results are in Sections \ref{cabout} and \ref{duko}.  
In more detail, we shall establish the following.  
\begin{itemize}
\item The system $\ATR_{0}$ is equivalent to various versions of (second-order) numerical choice for $\Pi_{1}^{1}$-formulas (Theorem \ref{hofff}).  
\item In Kohlenbach's higher-order RM, the associated higher-order numerical choice axioms yield a conservative extension of $\ATR_{0}$ (Theorem \ref{hunterX}).  
\item In Kohlenbach's higher-order RM, weak numerical choice is equivalent to \eqref{diff} and variations (Theorem \ref{Taranie}), as well as basic properties of metric spaces (Theorem \ref{tomore}).
\item The previous item yields equivalences for $\ATR_{0}$ and \eqref{diff} restricted to effectively Baire 2 functions (Theorem \ref{coredesigner}).    
\item Higher-order numerical choice is connected to Cousin's lemma (\cite{cousin1}) and Kohlenbach's generalisation of weak K\"onig's lemma (\cite{kohlenbach4}), and the coding of open sets (\cite{dagsamVII}), as discussed in Section \ref{duko}.
\end{itemize}
The penultimate item is interesting as it connects second- and higher-order RM. 
Finally, we provide a graphical overview with some foundational discussion in Section~\ref{grafo}, while suggestions for future research can be found in Section \ref{dasfutur}.

\subsection{Preliminaries and definitions}\label{prelim}
\subsubsection{On higher-order arithmetic}\label{ohoam}
We assume familiarity with Kohlenbach's higher-order RM, the base theory $\RCAo$ in particular. 
The original text is \cite{kohlenbach2} while more recent introductions are in \cites{dagsamXIV, sammetric}. 
A monograph on this topic is forthcoming (\cite{samBOOK}).

\smallskip

Zeroth of all, higher-order RM is officially formulated using types rather than sets.  
Hence, both `$n\in \N$' and `$n^{0}$' are used for `$n$ is a natural number'.  
Similarly, elements of Baire space are identified as `$f^{1}$' or `$f\in \N^{\N}$', while mappings from Baire space to Baire space are denoted $\Phi^{1\di 1}$ or $\Phi:\N^{\N}\di \N^{\N}$.  
We will not use objects of higher rank in this paper and the use of types is kept to a minimum.  We do introduce (standard) sequence notation in the below (Notation \ref{taxio}).

\smallskip

First of all, real numbers and real equality `$=_{\R}$' are defined in $\RCAo$ in the same way as in second-order RM, i.e.\ as fast-converging Cauchy sequences.  
A function from reals to reals, denoted $F:\R\di \R$, is then given by $\Phi:\N^{\N}\di \N^{\N}$ such that 
\[
(\forall x, y\in \R)(x=_{\R}y\di \Phi(x)=_{\R} \Phi(y)).
\]
Secondly, we consider the following essential axiom where the functional $E$ is also called \emph{Kleene's quantifier $\exists^{2}$} and is discontinuous on Baire space:
\be\tag{$\exists^{2}$}
(\exists E:\N^{\N}\di \{0,1\})(\forall f\in\N^{\N})( E(f)=0 \asa (\exists n\in \N)(f(n)=0)).
\ee
We write $\ACAo\equiv \RCAo+(\exists^{2})$ and observe that the latter proves the same second-order sentences as $\ACA_{0}$ (see \cite{hunterphd}).
We shall mostly work in $\ACAo$, which is convenient as set of reals are then given via characteristic functions, as follows.  
\bdefi[Sets]\label{char}~
\begin{enumerate}
\renewcommand{\theenumi}{\alph{enumi}}
\item A set of reals $A$ is given by a function $F_{A}:\R\di \{0,1\}$; we write $x\in A$ for $ F_{A}(x)=1$, for any $x\in \R$.\label{tinkzzz}
\item We write `$A\subseteq B$' if we have $(\forall x\in \R)(x\in A\di x\in B)$.  
\item A set $O\subseteq \R$ is \emph{open} in case $x\in O$ implies that there is $k\in \N$ such that $B(x, \frac{1}{2^{k}})\subseteq O$.\label{qzopen}  
\item The complement $O^{c}$ of an open set $O\subset \R$ is called closed.  
\item A set $O\subseteq \R$ is \emph{RM-open} if there are sequences $(a_{n})_{n\in \N}, (b_{n})_{n\in \N}$ of reals such that $x\in O$ if and only if $x\in \cup_{n\in \N}(a_{n}, b_{n})$ for all $x\in \R$ (\cite{simpson2}*{II.5.6}).\label{daz}
\item The complement $O^{c}$ of an RM-open set $O\subset \R$ is called RM-closed.  
\item A set $A\subset \R$ has \emph{measure zero} if for any $\eps>0$ there is a sequence of open intervals $(I_{n})_{n\in \N}$ such that $\cup_{n\in \N}I_{n}$ covers $A$ and $\eps>\sum_{n=0}^{\infty}|I_{n}|$. 
\item A set $A\subset \R$ is \emph{enumerable} if there is a sequence $(x_{n})_{n\in \N}$ that includes all elements of $A$. 
\item A set $A\subset \R$ is \emph{countable} if there is $Y:\R\di \N$ satisfying
\[
(\forall x, y\in A)(Y(x)=_{\N}Y(y)\di x=_{\R}y),
\]
where $Y$ is called `injective on $A$'.  
\end{enumerate}
\edefi
With this convention in place, we can adopt the usual definitions of nowhere dense set, $\bf{F}_{\sigma}$-set, the Baire property, et cetera from the literature. 
We do feel the need to recall some standard sequence notation from type theory.
\begin{nota}\label{taxio}\rm
Finite sequences of naturals are denoted $\sigma\in \N^{<\N}$ or $\sigma^{0^{*}}$ with the empty sequence being `$\langle \rangle$'.  
We do not always distinguish between a natural number $n$ and the associated one-element sequence $\langle n\rangle$. 
For $\sigma^{0^{*}}$ and $\tau^{0^{*}}$, the `concatenation of $\sigma$ followed by $\tau$' is denoted $\sigma*\tau$. 
For $\sigma^{0^{*}}=\langle n_{0}, \dots n_{k}\rangle$, the `length $|\sigma|$' is $k+1$ while $\overline{\sigma}m$ for $m<|\sigma|$ is $\sigma$ `cut off' after the first $m+1$ elements.  
We assume a standard pairing function is given, which codes finite sequences as natural numbers.  
We use similar notations for infinite sequences of naturals with their obvious meaning. 
\end{nota}
Thirdly, following Notation \ref{taxio}, consider the following axiom where the functional $\SS^{2}$ is often called \emph{the Suslin functional} (\cite{kohlenbach2, avi2, yamayamaharehare}):
\be\tag{$\SS^{2}$}
(\exists\SS:\N^{\N}\di \{0,1\})(\forall f \in \N^{\N})\big[  (\exists g \in \N^{\N})(\forall n \in \N)(f(\overline{g}n)=0)\asa \SS(f)=0  \big].
\ee
By definition, the Suslin functional $\SS^{2}$ can decide whether a $\Sigma_{1}^{1}$-formula in normal form, i.e.\ as in the left-hand side of $(\SS^{2})$, is true or false.   
The system $\FIVE^{\omega}\equiv \RCAo+(\SS^{2})$ proves the same second-order sentences as $\FIVE$ via a straightforward variation of the conservation results from \cite{hunterphd}.  

\smallskip

We also consider the functional $\SS_{k}^{2}$ which decides the truth or falsity of $\Sigma_{k}^{1}$-formulas in normal form; we define 
the system $\SIXK$ as $\RCAo+(\SS_{k}^{2})$, where  $(\SS_{k}^{2})$ expresses that $\SS_{k}^{2}$ exists.  
We define $\Z_{2}^{\omega}$ as $\cup_{k}\SIXK$ as one possible higher-order version of $\Z_{2}$.
The functionals $\nu_{n}$ from \cite{boekskeopendoen}*{p.\ 129} are essentially $\SS_{n}^{2}$ strengthened to return a witness (if existent) to the $\Sigma_{n}^{1}$-formula at hand.  
The operator $\nu_{n}$ is Hilbert-Bernays' $\nu$ from \cite{hillebilly2}*{p.\ 479} restricted to $\Sigma_{n}^{1}$-formulas. 

\smallskip

\noindent
Fourth, we introduce Kleene's quantifier $\exists^{3}$ as follows:
\be\tag{$\exists^{3}$}
(\exists E: (\N^{\N}\di \N)\di \N)(\forall Y:\N^{\N}\di \N)\big[  (\exists f \in \N^{\N})(Y(f)=0)\asa E(Y)=0  \big].
\ee
Both $\Z_{2}^{\Omega}\equiv \RCAo+(\exists^{3})$ and $\Z_{2}^\omega\equiv \cup_{k}\SIXK$ are conservative over $\Z_{2}$ as shown in \cite{hunterphd}.
The functional from $(\exists^{3})$ is also called `$\exists^{3}$', and we use the same convention for other functionals.  Hilbert-Bernays' operator $\nu$ from \cite{hillebilly2}*{p.\ 479} is essentially Kleene's $\exists^{3}$, modulo a non-trivial fragment of the Axiom of (quantifier-free) Choice.    

\smallskip

Fifth, like its second-order counterpart, the development of higher-order RM sometimes takes place over an \emph{extension} of the base theory.  
The following fragment of countable choice, not provable\footnote{One readily proves that $\QFAC^{0,1}$ is equivalent to $\CC(\R)$, i.e.\ countable choice for countable unions of sets of reals, over $\ZF$ (\cite{heerlijkheid}).} in $\ZF$, plays an important role.
\begin{princ}[$\QFAC^{0,1}$]
Let $\varphi$ be quantifier-free with $(\forall n\in \N)(\exists f\in \N^{\N})\varphi(f, n)$, then there exists a sequence $  (f_{n})_{n\in \N}$ in $\N^{\N}$ with $(\forall n\in \N)\varphi(f_{n}, n)$.
\end{princ}
The local equivalence between sequential and `epsilon-delta' continuity cannot be proved in $\ZF$ (\cite{heerlijkheid}), but can be established in $\RCAo+\QFAC^{0,1}$, as first shown by Kohlenbach in \cite{kohlenbach2}.  
In this light, it should not be a surprise that $\RCAo+\QFAC^{0,1}$ often occurs as a base theory.  Similarly, extra induction axioms are sometimes used in second-order RM (\cite{neeman}) and we will often use the following fragment of induction, where formulas of the same form as $\varphi(n)$ are called\footnote{The usual formula hierarchy based on the classes $\Sigma_{k}^{i}$ for $i=0, 1$ only allows for first- and second-order parameters.  
As a generalisation of $\Sigma_{1}^{1}$ to higher-order parameters, we let a \emph{$\Sigma$-formula} be any formula of the form $(\exists f\in 2^{\N})(Y(f, n)=0)$ for third-order $Y$. } a \emph{$\Sigma$-formula}; the negation of the latter is called a \emph{$\Pi$-formula}. 
\begin{princ}[$\SIND$]  
The induction axiom for formulas of the form $\varphi(n)\equiv (\exists f\in \N^{\N})(Y(f, n)=0)$ for any $Y^{2}$.   
\end{princ}
Finally, it is an empirical observation that, on one hand, many third-order theorems about (possibly) discontinuous functions are equivalent to the Big Five of RM (\cite{dagsamXIV}), 
while on the other hand many third-order theorems are provable in $\Z_{2}^{\Omega}$ but not in $\Z_{2}^{\omega}+\QFAC^{0,1}$.  
The weakest such `natural' theorem is well-known as the \emph{uncountability of the reals}, formulated as follows:
\begin{center} 
$\NIN_{[0,1]}$: there is no injection from $[0,1]$ to $\N$.  
\end{center}
A long list of theorems that imply $\NIN_{[0,1]}$ can be found in \cites{dagsamX, samBOOK}.  A surprisingly strong and natural principle not provable in $\Z_{2}^{\omega}+\QFAC^{0,1}$ is the coding principle $\open$, studied in \cite{dagsamVII, samBOOK}, which expresses that an open set as in Definition \ref{char} is RM-open.  
The principle $\open$ is equivalent to e.g.\ the supremum principle for semi-continuous functions, the Urysohn lemma, and the Tietze extension theorem, all formulated in third-order arithmetic.  

\subsubsection{Definitions and basic results}
We use the standard definition of the Riemann integral.  
A modulus is well-known in RM and related areas (see e.g.\ \cite{sayo, bish1, aberth}). 
\begin{defi}[Riemann integral] Let $a<_{\R}b$ be given.  
\begin{itemize}
\item A finite sequence $P:=(t_{0}, I_{0}, \dots, t_{k}, I_{k})$ is a \emph{tagged partition} of $[a,b]$ if the `tag' $t_{i}\in \R$ is in the interval $ I_{i}$ for $i\leq k$, and the $I_{i}$ partition $I$.
\item The \emph{mesh} $\|P\|$ of a tagged partition $P$ is $\max_{1\leq i\leq k} | I_{i}|$.
\item For a tagged partition $P$ and $f:[a,b]\di \R$, the \emph{Riemann sum} is $S(f, P, a, b):=\sum_{i=1}^{k} f(t_{i-1})\cdot |I_{i}|$.
\item A function $f:[a,b]\di \R$ is \emph{Riemann integrable} on $[a, b]$ if for any $k\in \N$ there is $N\in \N$ such that for tagged partitions $P, Q$ of $[a,b]$ with $\|P\|, \|Q\|\leq \frac{1}{2^{N}}$, we have $|S(f, P, a, b)-S(f, Q, a, b)|<\frac{1}{2^{k}}$.
\item A function $f:[a,b]\di \R$ is \emph{effectively} Riemann integrable if there is $g\in \N^{\N}$, called a modulus, such that we can take $N=g(k)$ for all $k\in \N$ in the previous item. 
\item A function $f:\R\di \R$ is Riemann integrable \emph{everywhere} if it is Riemann integrable on each real interval $[a, b]\subset \R$. 
\item A function $f:\R\di \R$ is \emph{effectively} Riemann integrable everywhere if there is $g:( \R^{2}\times \N)\di \N$ such that $g(k,a, b)$ is a modulus for all $a, b\in \R$.
\end{itemize}
\edefi
The Vitali-Lebesgue theorem states that Riemann integrability is equivalent to being continuous almost everywhere and bounded on every interval.
Now, a function $f:\R\di \R$ is usually called continuous almost everywhere (ae) if the set of discontinuity points $D_{f}$ has measure zero.    
However, the set $D_{f}$ need not exist in weak logical systems as its definition involves quantifiers over $\R$.
The following definition avoids introducing the set $D_{f}$ in the same way the usual definition of `measure zero set' avoids introducing the Lebesgue measure.  
\bdefi
A function $f:\R\di \R$ is \emph{continuous ae} in case for any $\eps>0$, there is a sequence of intervals $(I_{n})_{n\in \N}$ of total length at most $\eps$ such that if $f$ is discontinuous at $x\in \R$, we have $x\in \cup_{n\in \N}I_{n}$.  
\edefi
\bdefi\label{zepsco}
A function $f:\R\di \R$ is \emph{$\eps$-almost continuous ae} if for any $\eps'>0$, there is a sequence of intervals $(I_{n})_{n\in \N}$ of total length at most $\eps'$ such that for $x\not \in \cup_{n\in \N}I_{n}$, 
we have that $\limsup_{y\di x} |f(x)-f(y)|\leq \eps$.
\edefi
One direction of the Vitali-Lebesgue theorem is then readily proved.  
\begin{thm}[$\RCAo+\WKL+\QFAC^{0,1}$]\label{bengas2}
Let $f:[0,1]\di \R$ be continuous ae and bounded.
Then $f$ is Riemann integrable on $[0,1]$.
\end{thm}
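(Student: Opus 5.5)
Let $M$ bound $|f|$ on $[0,1]$ and fix $k\in\N$. The idea is to follow the textbook proof and use $\WKL$ to get the compactness it needs, while $\QFAC^{0,1}$ supplies the moduli of continuity. Put $\eps:=1/2^{k}$ and apply continuity ae to $\eps$. This gives a sequence of open intervals $(I_{n})_{n\in\N}$ of total length at most $\eps$ that covers every discontinuity point of $f$. Where needed we may replace each $I_n$ by a slightly larger open interval with rational endpoints, at the cost of doubling $\eps$. The goal is an $N$ such that any two tagged partitions $P,Q$ of mesh at most $1/2^{N}$ satisfy $|S(f,P,0,1)-S(f,Q,0,1)|\leq C\cdot\eps$, where $C$ depends only on $M$.

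First we cover the rest of the interval. Every $x\in[0,1]\setminus\cup_n I_n$ is a point of continuity of $f$, so for each such $x$ there is a rational $\delta_x$ with $|f(y)-f(x)|<\eps$ whenever $|y-x|<\delta_x$. Such a point-by-point $\delta$ is not available as a function in $\RCAo$. Since $f$ is given by a functional on codes, however, continuity at $x$ can be expressed by an arithmetical formula over rationals. We then invoke Kohlenbach's result from $\RCAo+\QFAC^{0,1}$ that epsilon-delta and sequential continuity agree, and apply $\QFAC^{0,1}$ to a countable, rationally indexed family. The intervals $J_{q,m}:=(q-2^{-m},q+2^{-m})$ with $q\in\Q$ and $m\in\N$ on which the oscillation of $f$ is at most $2\eps$ then cover every point outside $\cup_nI_n$. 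Together with the $I_n$, this gives a countable open cover of $[0,1]$ by rational intervals.

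Next we extract a finite subcover and a mesh bound. By Heine--Borel for countable covers, which is equivalent to $\WKL$, there is a finite subcover $I_{0},\dots,I_{n_0}$ together with finitely many $J$'s. We take $1/2^{N}$ to be below a Lebesgue number of this finite cover, which can be computed because it is a finite rational cover. For a tagged partition with mesh at most $1/2^{N}$, every subinterval lies inside one cover member. Subintervals lying in some $J$ contribute at most $2\eps$ times their length to the difference between the Riemann sum and, say, the corresponding upper-type sum. Subintervals lying only in some $I_i$ have total length at most $2\eps$, since the subintervals overlap only at endpoints and each meets at most boundedly many $I_i$. Their contribution is therefore at most $2M\cdot 2\eps$. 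Comparing $P$ and $Q$ through their common refinement, that is, by the triangle inequality on each subinterval of the refinement, gives $|S(f,P)-S(f,Q)|\leq (4+8M)\eps$. Choosing $\eps$ appropriately small yields the definition of Riemann integrability.

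The main obstacle is the first step: the cover of the continuity points must be a \emph{sequence}, so that $\WKL$ applies. The set of continuity points cannot be formed directly, and selecting a $\delta_x$ for each $x$ is not a countable choice. My first attempt is to quantify only over rational pairs $(q,m)$. The statement ``the oscillation of $f$ on $J_{q,m}$ is at most $2\eps$'' is $\Pi$ over reals and hence not decidable. To handle this, I would use the dense-rational approximation of oscillation, which works at continuity points via $\QFAC^{0,1}$, or else combine it with $\WKL$'s $\Sigma^0_1$-separation to obtain the finite subcover without deciding membership. If that fails, a fallback is to argue by contradiction. Suppose fine partitions $P_j,Q_j$ have Riemann sums differing by more than $C\eps$. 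Then $\WKL$, through sequential compactness of $[0,1]$ available for sequences given by $\QFAC^{0,1}$, produces a limit point of bad tags. That point must lie outside $\cup_nI_n$, yet it is then a continuity point, which is a contradiction.
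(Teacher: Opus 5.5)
Your main route fails at its first step, and none of the repairs you suggest closes the gap. Whether $f$ has oscillation at most $2\eps$ on $J_{q,m}$ is a universal statement over all reals in $J_{q,m}$. For discontinuous $f$, not even $(\exists^{2})$ decides it. $\QFAC^{0,1}$ only produces witnesses for statements $(\forall n\in\N)(\exists g\in\N^{\N})\varphi(g,n)$ with $\varphi$ quantifier-free, so it never tells you which pairs $(q,m)$ qualify.

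Your claim that continuity at $x$ is arithmetical over rationals is false for discontinuous $f$. The indicator function of the irrationals satisfies the rational-restricted condition at every rational point, yet it is discontinuous there. The rational-oscillation variant fails because an interval on which $f$ is small at rationals can contain points of $\cup_{n}I_{n}$ where $f$ jumps, and such points can be tags. $\Sigma^{0}_{1}$-separation does not help, since the family is not $\Sigma^{0}_{1}$ in any second-order parameter. Choosing $\delta_{x}$ for each $x$ and then extracting a finite subcover is an instance of Cousin's lemma $\HBU$, which is unprovable even in $\Z_{2}^{\omega}+\QFAC^{0,1}$; a correct proof has to avoid it.

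There is also a smaller slip. The cells lying in $\cup_{i\leq n_{0}}I_{i}$ have total length at most $\sum_{i}|I_{i}|$ because they are non-overlapping and contained in that union, not because each meets boundedly many $I_{i}$.

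Your fallback points the right way, but as stated it is wrong. Bad tags can accumulate at a discontinuity point inside $\cup_{n}I_{n}$, which is exactly where $f$ oscillates, so nothing forces the limit to lie outside $\cup_{n}I_{n}$. The paper isolates the correct lemma: $f$ is uniformly continuous on the closed set $C=[0,1]\setminus\cup_{n}I_{n}$ relative to $[0,1]$. That is, some $\delta>0$ gives $|f(x)-f(y)|<\eps/8$ whenever $x\in C$, $y\in[0,1]$ and $|x-y|<\delta$.

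If this fails, $\QFAC^{0,1}$ (choice over $N\in\N$, not over $x\in C$) yields $x_{N}\in C$ and $y_{N}$ with $|x_{N}-y_{N}|<2^{-N}$ and $|f(x_{N})-f(y_{N})|\geq 2^{-k_{0}}$. Since the $x_{N}$ were chosen in the closed set $C$, the limit $z$ of a convergent subsequence lies in $C$, contradicting continuity of $f$ at $z$.

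Now take $(I_{n})$ of total length $\eps/8M$. Cells of the common refinement of $P$ and $Q$ that meet $C$ contribute at most $\eps/4$. Cells lying inside $\cup_{n}I_{n}$ contribute at most $2M\cdot\eps/8M$.

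Finally, sequential compactness of $[0,1]$ is $\ACA_{0}$, not $\WKL$. The paper gets it by cases: if $f$ is continuous, it uses the known result; otherwise it uses Kohlenbach's theorem that a discontinuous function yields $(\exists^{2})$.
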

\begin{proof}
In case $f$ is continuous, the theorem follows by \cite{dagsamXIV}*{\S2}.  In case $f$ is discontinuous, we have access to $(\exists^{2})$ by \cite{kohlenbach2}*{Prop.\ 3.14}.  
Suppose $|f(x)|\leq M\in \N$ for all $x\in [0,1]$ and fix $\eps>0$.  Since $f$ is continuous ae, let $(I_{n})_{n\in \N}$ be the associated sequence of open intervals with total length $\eps/8M$.   
Define the closed set $C$ as $[0,1]\setminus \cup_{n\in \N}I_{n}$.  Now assume that $f$ is uniformly continuous on $C$ (relative to $\R$), 
i.e.\ there is $\delta>0$ such that for $x\in C, y\in [0,1]$, we have $|x-y|<\delta\di |f(x)-f(y)|<\eps/8$.    
Consider tagged partitions $P, Q$ with mesh at most $\delta$ and verify that $|S(f, P)-S(f, Q)|<\eps$ using the common refinement of $P$ and $Q$, i.e.\ $f$ is Riemann integrable as required.

\smallskip

Finally, to show that $f$ is uniformly continuous on $C$ (relative to $\R$), suppose not, i.e.\ 
there is $k_{0}\in \N$ such that for all $N\in \N$, there are $x\in C$ and $y\in B(x, \frac{1}{2^{N}})$ such that $|f(x)-f(y)|\geq \frac{1}{2^{k_{0}}}$.
Apply $\QFAC^{0,1}$ to obtain sequences $(x_{N})_{N\in \N}$ and $(y_{N})_{N\in \N}$ such that for all $N\in \N$, $x_{N}\in C$, $y_{N}\in B(x_{N}, \frac{1}{2^{N}})$, and $|f(x_{N})-f(y_{N})|\geq \frac{1}{2^{k_{0}}}$.
By sequential compactness, $(x_{N})_{N\in \N}$ has a convergent sub-sequence $(z_{N})_{N\in \N}$, say with limit $z\in [0,1]$.  
We let $(w_{N})_{N\in \N}$ be the associated sub-sequence of $(y_{N})_{N\in \N}$.  
Since $C$ is closed, we have $z\in C$ but also that $|f(z)-f(w_{N})|>\frac{1}{2^{k_{0}}}$ for arbitrary large $N\in \N$, a contradiction as $f$ is continuous at all reals in $ C$. 
\end{proof}
One readily shows that the other direction of the Vitali-Lebesgue theorem, i.e.\ Riemann integrability implies continuity ae, implies $\NIN_{[0,1]}$ from Section \ref{ohoam} and is therefore not provable from the Big Five, or $\Z_{2}^{\omega}+\QFAC^{0,1}$.  The RM of the Vitali-Lebesgue theorem is connected to Tao's pigeon hole principle for measure (\cite{taomes}) and explored in \cite{samBIG2}.  
We also have the following result; Patrick Uftring has provided a proof\footnote{Apply the definition of Riemann integrability for $k=0$ and let $N_{0}\in \N$ be the resulting number.  
Let $M_{0}$ be an upper bound for $f(t_{i})$ and $f(x_{i})$ where the $x_{i}$ and $t_{i}$ come from an equidistant partition with mesh $\frac{1}{2^{N_{0}}}$.
Now verify that $f$ is bounded on $[0,1]$ by $2^{N_{0}}+M_{0}$.\label{uftring}} that Riemann integrable functions are bounded in $\RCAo$.  
\begin{thm}[$\RCAo+\WKL_{0}+\QFAC^{0,1}+\SIND$]\label{flalala}
A function is Riemann integrable iff it is bounded and for any $\eps>0$, it is $\eps$-almost continuous ae.  
\end{thm}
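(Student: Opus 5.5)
We treat the two directions separately, working on $[0,1]$. As in the proof of Theorem \ref{bengas2}, if $f$ is continuous then both sides hold trivially: Riemann integrability by \cite{dagsamXIV}*{\S2}, and $\eps$-almost continuity ae with the empty cover. So we may assume $f$ is discontinuous, and then $(\exists^{2})$ is available by \cite{kohlenbach2}*{Prop.\ 3.14}.

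\emph{Reverse direction} (bounded and $\eps$-almost continuous ae for all $\eps$ $\Rightarrow$ integrable). We mimic the proof of Theorem \ref{bengas2}. Fix $\eps>0$ and $M$ with $|f|\le M$. Take intervals $(I_{n})$ of total length $\le\eps/8M$ outside of which $\limsup_{y\to x}|f(x)-f(y)|\le\eps/16$, and let $C=[0,1]\setminus\cup_n I_n$. The main step is a uniform version: there is $\delta>0$ such that for $x\in C$ and $|x-y|<\delta$ we have $|f(x)-f(y)|<\eps/8$. If this fails, apply $\QFAC^{0,1}$ to obtain witnessing sequences $x_N\in C$ and $y_N\in B(x_N,2^{-N})$. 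Sequential compactness, via $(\exists^2)$ (i.e.\ $\ACA_0$), yields a limit $z\in C$. We then need a contradiction from $\limsup_{y\to z}|f(z)-f(y)|\le \eps/16$ together with $|f(x_N)-f(y_N)|\ge\eps/8$. This needs $f(x_N)$ close to $f(z)$ as well, which holds because $x_N\to z$, so $|f(x_N)-f(z)|\le\eps/16+$small. We choose the constants so that the triangle inequality gives the contradiction. After that, the Riemann sums of two partitions with mesh $<\delta$ differ by the contribution of subintervals meeting $C$ (at most $\eps/4$ per unit length) plus those contained in $\cup I_n$ (at most $2M\cdot$ total length). The latter is not immediately compact, so we first use $\WKL$ (Heine--Borel for the closed set $[0,1]$ covered by the $I_n$ together with $\delta$-balls around $C$) to reduce to finitely many $I_n$.

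\emph{Forward direction} (integrable $\Rightarrow$ bounded and $\eps$-almost continuous ae). Boundedness is Uftring's argument (footnote \ref{uftring}). For the second part, fix $\eps,\eps'>0$. Choose $N$ from Riemann integrability such that partitions of mesh $\le2^{-N}$ have sums within $\eps\eps'/4$. On the equidistant partition into intervals $J_i$ of length $2^{-N}$, comparing tag choices shows that $\sum_i \mathrm{osc}(f,J_i)|J_i|\le\eps\eps'/4$, where oscillation means sup minus inf over $J_i$. Hence the intervals with oscillation $>\eps/2$ have total length $\le\eps'/2$. Points interior to other $J_i$ satisfy $\limsup_{y\to x}|f(x)-f(y)|\le\eps/2$. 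The finitely many endpoints are covered by tiny intervals. This already gives a single finite family for $\eps$. Note that here $\eps$ itself is fixed, so no iteration over $\eps$ is needed.

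\emph{Main obstacle.} Expressing ``oscillation $>\eps/2$ on $J_i$'' requires a quantifier over reals. Selecting the bad $J_i$ among $2^N$ intervals is a bounded induction/comprehension over a $\Sigma$-formula, namely $(\exists x,y\in J_i)(|f(x)-f(y)|>\eps/2)$. This is exactly where $\SIND$ enters, via finite $\Sigma$-comprehension up to $2^N$. Similarly, turning the oscillation bound into an actual Riemann-sum comparison needs choosing near-sup/inf tags for each bad interval, which is a finite choice provable from $\SIND$. We expect this bookkeeping, rather than the analysis, to be the delicate part.
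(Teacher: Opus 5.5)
Your proposal is correct and follows essentially the paper's route. The reverse direction adapts the proof of Theorem \ref{bengas2}; your triangle-inequality step needs the $\limsup$ bound strictly below half the threshold, e.g.\ $\eps/32$ instead of $\eps/16$. The forward direction is the paper's argument: use $\SIND$ to pick near-extremal tags on an equidistant partition, then bound the total length of the high-oscillation intervals. Keeping the oscillation threshold $\eps$ separate from the length bound $\eps'$ is a cleaner way of doing the paper's $\sqrt{\eps}$ bookkeeping.
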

\begin{proof}
The proof of Theorem \ref{bengas2} establishes the reverse direction.  For the forward direction, 
given $\eps>0$, take a uniform mesh $\delta>0$ partition that guarantees that Riemann sums differ by at most $\eps$.  
For the partition, using $\SIND$, choose length $1/\delta$ sequences $x_i$ and $y_i$ that maximise the possible differences in Riemann sums for the function $f$, up to some $\eps\delta$ additive factor.  
For almost all indices $i$ (as $\eps\di 0$), we have  $|f(x_i)-f(y_i)|<\sqrt \eps$, since $\eps$ is $o(\sqrt \eps)$.  
Furthermore, as the choice of $x_{i}$ and $y_{i}$ essentially maximises possible differences, in the corresponding intervals, $f$ is $(\sqrt{\eps}+\eps)$-almost continuous, as desired.
\end{proof}
Note that $\SIND$ is not used for the reverse direction while $\QFAC^{0,1}$ is not used for the forward direction.
Remark \ref{trixie} discusses the role of the Axiom of Choice in Theorem \ref{flalala}.  Below, we make use of similar generalisations of $\WKL$ from \cite{kohlenbach4}*{\S5}.  
\begin{rem}\label{trixie}\rm
Since $\QFAC^{0,1}$ is a fragment of the Axiom of Choice not provable in $\ZF$, it is interesting to observe that the presence of $\SIND$ allows us to weaken $\QFAC^{0,1}$ to the following generalisation of $\WWKL$ (see \cite{simpson2}*{X} for the latter).
\begin{center}
\emph{Every infinite and positive measure $0$-$1$-tree $T$ given by a $\Sigma$-formula, has an infinite path.}
\end{center}
Here the path -but not necessarily the tree- exists as a set.  Positive measure means that for some $\eps>0$, for each $n\in \N$, the tree has at least $\eps 2^n$ nodes at depth $n$; this makes sense using $\SIND$.
 For the proof, the tree is formed by recursively even partitioning of the interval, keeping as branches sub-intervals on which $f$ varies by more than $\eps$ and that are not excluded by the chosen $²\eps$-length interval sequence from Definition \ref{zepsco}.  If for every $\eps$ (the tree depends on $\eps$), the tree is not of positive measure, we get Riemann integrability, while a path contradicts the continuity.
\end{rem}
Finally, we list the remaining definitions of the function classes to be studied.  
\bdefi\label{flung} 
For $f:\R\di \R$, we have the following definitions:
\begin{itemize}
\item $f$ is \emph{quasi-continuous} at $x_{0}\in\R$ if for any $k,N\in \N$, there is an open interval ${ (a, b)\subset B(x_{0}, \frac{1}{2^{N}})}$ with $(\forall x\in (a, b)) (|f(x_{0})-f(x)|<\frac{1}{2^{k}})$,
\item $f$ is \emph{locally bounded} at $x_{0}\in \R$ if $(\exists N\in \N)(\forall y\in B(x_{0}, \frac{1}{2^{N}}))(|f(y)|\leq N )$,
\item $f$ is \emph{sub-continuous} at $x_{0}\in \R$ if for any $(y_{n})_{n\in \N}$ with limit $x_{0}$, the sequence of reals $(f(y_{n}))_{n\in \N}$ has a convergent sub-sequence,
\item $f$ is \emph{measurable} if for any open $V\subset \R$, $f^{-1}(V)$ is the union of an $\bf F_{\sigma}$ and a measure zero set \(see e.g.\ \cite{taomes}*{Ex.\ 1.2.19 and Lem.\ 1.3.9}\),
\item $f$ is \emph{Baire measurable} \(\cite{hijanis}\) if for every open $V\subset \R$, the set $f^{-1}(V)$ has the Baire property, 
\item $f$ is \emph{simply continuous} \(sico\) if for any open $V\subset \R$, the set $f^{-1}(V)$ is the union of an open and a nowhere dense set \(\cites{morebors, bronnaz, biwas}\).
\end{itemize}
\edefi
Many more function classes are studied in \cite{samBOOK} and we would be interested in other classes that behave in the same way as the Riemann integrable functions.

\section{About and around numerical choice}\label{cabout}
\subsection{Introduction}
In the below, we study the logical properties of the numerical choice axiom for $\Pi_{1}^{1}$-formulas as follows. 
\begin{princ}[$\BNC$]\label{zefdes}
For any formula $\varphi \in \Pi_{1}^{1}$, we have  
\[
(\forall n\in \N)(\exists m\in \N)\varphi(n, m)\di (\exists g\in \N^{\N})(\forall n\in \N)(\exists m\leq g(n))\varphi(n, m).
\]
\end{princ}
\begin{princ}[$\ENC$]\label{zefdes2}
For any formula $\varphi \in \Pi_{1}^{1}$, we have 
\[
(\forall n\in \N)(\exists m\in \N)\varphi(n, m)\di (\exists g\in \N^{\N})(\forall n\in \N)\varphi(n, g(n)).
\]
\end{princ}
The acronyms $\BNC$ and $\ENC$ stand for `bounding' and `exact' numerical choice, in accordance with the properties of the associated choice functions.  
We also wish to study higher-order numerical choice; we let $\ENC^{\omega}$ and $\BNC^{\omega}$ be the associated second-order principles with higher-order parameters allowed in the formula $\varphi$.  

\smallskip

We shall prove the following results in the below sections. 
\begin{itemize}
\item Over $\RCA_{0}$, we have $\ATR_{0}\asa \ENC\asa \BNC$ (Theorem \ref{hofff}).
\item $\ACAo+\ENC^{\omega}+\QFAC^{0,1}$ is conservative over $\ATR_{0}$ (Theorem \ref{hunterX}).  
\item Both $\ENC^{\omega}$ and $\BNC^{\omega}$ cannot be proved in strong systems like $\Z_{2}^{\omega}+\QFAC^{0,1}$; both yield stronger comprehension like $\SIX$ when combined with higher-order comprehension like $\FIVE^{\omega}$ (Theorem~\ref{slif}). 
\item A number of natural equivalences for $\BNC^{\omega}$ exist, involving real analysis (Theorem \ref{Taranie}) and metric spaces (Theorem \ref{tomore}), including the domination of Riemann integrable functions as in \eqref{diff} from Section \ref{intro}.  
\item We connect numerical choice to Cousin's lemma (\cite{cousin1}) and Kohlenbach's generalisation of weak K\"onig's lemma $\Phi_{1}$-$\WKL$ (\cite{kohlenbach4}) in Sections \ref{notenoughleeb}-\ref{kokolema}. 
A pleasing equivalence is $\ENC^{\omega}\asa [\BNC^{\omega}+\Phi_{1}\textup{-}\WKL]$ over extra induction.  
\item We show that numerical choice is intimately related to the coding principle $\open$ from \cite{dagsamVII}, which simply expresses that open sets have RM-codes.  
\end{itemize}
We believe there to be many more potential equivalences for numerical choice. 

\subsection{Second-order equivalences and around}\label{enc1}
We show that the numerical choice principles $\BNC$ and $\ENC$ are equivalent to $\ATR_{0}$ over $\RCA_{0}$.  
We obtain a conservation result for the associated higher-order system $\ENC^{\omega}$ and $\ATR_{0}$.  

\smallskip

Firstly, we have the following second-order equivalences.   
The complexity in the final item is optimal as $\ACA_0$ proves that for any function $f$ that is the lim inf of a sequence of continuous functions, there is $g:\Q^2\di \R\cup\{-\infty,+\infty\}$ that lists the supremum of $f$ on every rational interval.
\begin{thm}[$\RCA_{0}$]\label{hofff} The following are equivalent. 
\begin{enumerate}
\renewcommand{\theenumi}{\alph{enumi}}
\item The principle $\ATR_{0}$.
\item The separation principle for $\Sigma_{1}^{1}$-formulas \(\cite{simpson2}*{V.5.1}\).
\item The principle $\BNC$ as in Principle \ref{zefdes}.
\item The principle $\ENC$ as in Principle \ref{zefdes2}.
\item Every $\R\di \R$-function that is the lim sup of a sequence of continuous functions and is non-zero for at most one argument on each unit interval, is dominated by a continuous function.\label{kuhl}
\end{enumerate}
\end{thm}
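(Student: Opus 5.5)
We argue via the cycle (a)$\di$(d)$\di$(c)$\di$(b)$\di$(a), and separately (c)$\asa$(e). The equivalence (a)$\asa$(b) is standard (\cite{simpson2}*{V.5.1}), so only the numerical choice items and item (\ref{kuhl}) need new work.

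\textbf{From $\ATR_0$ to $\ENC$, and $\ENC$ to $\BNC$.} For (a)$\di$(d), let $\varphi(n,m)\in\Pi^1_1$ with $(\forall n)(\exists m)\varphi(n,m)$. I would use the $\Sigma^1_1$-separation form (b). Consider the $\Sigma^1_1$ formulas
\[
\psi_0(\langle n,m\rangle)\equiv (\exists m'<m)\varphi(n,m') \quad\text{fails to be } \Sigma^1_1,
\]
so instead I would rely on the known fact that $\ATR_0$ proves $\Pi^1_1$-uniformization-type choice for \emph{numbers} via the pseudohierarchy/$\omega$-model method. Concretely: $\ATR_0$ proves that for each $X$ there is a countable coded $\omega$-model $M\ni X$ of $\Sigma^1_1$-$\textsf{AC}_0$ that is $\beta$-correct for the relevant $\Pi^1_1$ facts. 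The cleaner route is $\Sigma^1_1$ separation: let $A=\{\langle n,m\rangle:\neg\varphi(n,m)\}$ and $B=\{\langle n,m\rangle: (\exists m'<m)\ldots\}$. Since this is awkward, I would instead use the classical result (\cite{simpson2}*{V.8}) that $\ATR_0$ proves $\Sigma^1_1$-$\textsf{DC}$-free numerical $\Pi^1_1$ choice through the existence of hyperjump-like models: build a countable coded $\beta$-model $M$ containing the parameters (available in $\ATR_0$ for $\Pi^1_1$ reflection of the specific formula via Kleene--Brouwer comparison of trees, \cite{simpson2}*{V.6}). Then $\varphi(n,m)$ is equivalent to ``$T_{n,m}$ is well-founded'', and $\ATR_0$ compares well-orderings, so define $g(n)$ as the least $m$ with $T_{n,m}$ well-founded, decided uniformly by the comparability of countable well-orderings/pseudohierarchy argument. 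This is the step I expect to be the main obstacle: uniformly deciding well-foundedness of the relevant trees only where it matters. I would try first to choose $g(n)$ as the $m$ minimizing the Kleene--Brouwer rank-type comparison among $T_{n,0},\dots$, using that $\ATR_0$ proves comparability of well-orderings. (d)$\di$(c) is trivial with $g$ itself as bound.

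\textbf{From $\BNC$ to separation.} Given $\Sigma^1_1$ formulas $\theta_0,\theta_1$ with $\neg(\theta_0(k)\wedge\theta_1(k))$, we have $(\forall k)(\exists m)[(m=0\wedge\neg\theta_0(k))\vee(m=1\wedge\neg\theta_1(k))]$, a $\Pi^1_1$ matrix. $\BNC$ only gives a bound, which is useless for $m\in\{0,1\}$; so instead encode witnesses: write $\theta_i(k)\equiv(\exists f)(\forall j)R_i(k,\overline f j)$ and use $\BNC$ on $(\forall k)(\exists m)[\ldots]$ where $m$ bounds search in the Kleene--Brouwer ordering of the trees $T^i_k$, in the spirit of \cite{simpson2}*{V.5}: the bound $g$ lets one compute, arithmetically in $g$, which tree is ill-founded below level $g(k)$, hence a separating set via $\ACA_0$ (first derive $\ACA_0$ from $\BNC$ with arithmetical $\varphi$, since bounds for $(\forall n)(\exists m)(\exists j\le m)h(j)=n$-type statements give ranges).

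\textbf{Item (e).} For (c)$\di$(e): for each unit interval $[k,k+1]$ the function takes one non-zero value; ``$M$ bounds $f$ on $[k,k+1]$'' is $\Pi^1_1$ (quantifying over reals $x$ with $f(x)=\limsup f_n(x)$ arithmetical), so $\BNC$ gives bounds $g(k)$ and a piecewise-linear continuous dominator. For (e)$\di$(b), code a pair of $\Sigma^1_1$ sets: place at the point of $[k,k+1]$ coding a witness path $f$ for $\theta_0(k)$ a large value, realized as a lim sup of continuous spikes; the dominator's values at $k$ then separate, reducing again via $\ACA_0$ (obtained from (e) with arithmetical spikes).
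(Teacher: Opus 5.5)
Your routine steps are fine: (d)$\di$(c) is trivial, (c)$\di$(e) is how the paper treats (e) (as a special case of $\BNC$), and $\ACA_{0}$ does follow from $\BNC$ via bounds for true $\Sigma^0_1$ statements. Your formula needs the disjunct $(\forall j)(h(j)\neq n)$ to hold for all $n$. The substantive directions, however, are not established.

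\textbf{The direction (a)$\di$(d).} Countable coded $\beta$-models are not available in $\ATR_{0}$: the statement that every set lies in such a model is equivalent to $\FIVE$ over $\ACA_{0}$ \cite{simpson2}*{VII.2}. Moreover, any argument setting $g(n)$ to the \emph{least} $m$ with $T_{n,m}$ well-founded proves $\FIVE$. For $\psi\in\Pi^1_1$, take $\varphi(n,m)\equiv(m=0\wedge\psi(n))\vee m=1$; the least-witness function decides $\psi$. Comparability of well-orderings does not rescue this, since you would be comparing infinitely many trees, many of them ill-founded. Your fallback of minimising the rank over $m$ points in the right direction, but it lacks a uniform bound on the ranks involved. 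The paper supplies this bound. Let $\alpha_{n,m}$ be the ordinal of the $Z$-recursive tree for $\varphi(n,m)$, and set $\alpha_n=\min_m\alpha_{n,m}$. Write $(\exists m)\varphi(n,m)$ as a $\Pi^1_1$ statement whose tree, uniformly recursive in $n$, has ordinal at least $\alpha_n$. Concatenating these over $n$ gives a single $Z$-recursive well-ordering of type at least $\alpha=\sup_n\alpha_n$. Transfinite recursion along it decides which $\varphi(n,m)$ have ordinal at most $\alpha$. Every $n$ has such an $m$, and picking one yields a choice function hyperarithmetical in $Z$. Well-foundedness is only ever decided below a fixed bound, which is within reach of $\ATR_{0}$.

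\textbf{The reversals (c)$\di$(b) and (e)$\di$(b).} You never exhibit a $\Pi^1_1$ formula whose bounded witnesses carry usable information. A bound $g(k)$ only says that \emph{some} $m\leq g(k)$ works. Ill-foundedness is invisible at every finite level, so `which tree is ill-founded below level $g(k)$' determines nothing. In (e)$\di$(b) there are two further problems:
\begin{enumerate}
\item A continuous dominator is merely an upper bound, so it cannot tell whether a spike is present at a witness point; a large enough dominator is compatible with both cases.
\item Witnesses for $\theta_0(k)$ are not unique, so your function need not be non-zero at only one point per unit interval.
\end{enumerate}

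The paper's idea is to make the numerical witness a halting time and to exploit uniqueness. Fix a $Z$-recursive well-ordering $\prec$ and $n=(a,e)$. Let $\varphi(n,m)$ say that for all $X$, if $X$ is the iterated Turing jump of $Z$ along $\prec$ below $a$ and $e$ halts with oracle $(Z,X)$, then it halts within $m$ steps. Two distinct such $X$ could be compared in $\ACA_{0}$ to give a $\prec$-descending sequence, so $(\forall n)(\exists m)\varphi(n,m)$ holds. A bound $g$ then turns the transfinite recursion into a bounded recursion computable from $Z$ and $g$, with $\WKL_{0}$ ruling out non-termination. This gives $\ATR_{0}$ directly rather than via separation.

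For (e), the same uniqueness lets you place the halting time as the single non-zero value at the point coding $X$ on the $n$-th unit interval. The defining tests are $\Pi^0_2(Z)$, which a lim sup of continuous functions can express, and the dominator then supplies the halting-time bounds.
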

\begin{proof}
First of all, the restriction of Principle \ref{zefdes2} to $m\in \{0,1\}$ yields the separation principle for $\Sigma_{1}^{1}$-formulas, which is equivalent to $\ATR_0$ by \cite{simpson2}*{V.5.1}.  
To see that the full principle is provable in $\ATR_{0}$, let $\varphi(n,m)$ have the form $(\forall X\subset \N)\phi(X, n,m)$ and let $Z$ denote the parameters of the arithmetical formula $\phi$ not shown. 
For each $\Pi_{1}^{1}(Z)$ statement, assign an ordinal by converting the statement to well-foundedness of a $Z$-recursive order and taking its ordinal, using $\infty$ for false statements. 

\smallskip

Let $\alpha_{n,m}$ be the ordinal for $(\forall X\subset \N)\phi(X, n,m)$ and define $\alpha_{n}:=\min_{m}\alpha(n,m)$ and $\alpha:=\sup_{n}\alpha_{n}$.  
Then $\alpha$ exists because we can concatenate $X$-recursive well-orderings exceeding $\alpha_{n}$ and get an upper bound on $\alpha$.  
Note that $\alpha_{n}$ can be defined uniformly as we can convert $(\exists n\in \N)\varphi(n,m)$ to a $\Pi^1_1$-formula in $\ATR_{0}$; the conversion must be chosen such that the resulting ordinal is not below $\alpha_{n}$. 

\smallskip

Now, a choice function for $(\forall n\in \N)(\exists m \in \N)\varphi(n,m)$ exists in $L_{\alpha+1}(Z)$ and is hyperarithmetical in $Z$.  
This is because $L_{\alpha+1}(Z)$ contains the set of all $\Pi_{1}^{1}(Z)$-statements whose ordinals are at most $\alpha$, along with their assigned ordinals.
Hence, $\ATR_{0}$ implies Principle \ref{zefdes2}, as required.  

\smallskip

Secondly, we can derive $\ACA_0$ from Principle \ref{zefdes} by using it (for every $Z$) to give bounds for true $\Sigma^0_1(Z)$-statements, and thus computing the Turing jump of $Z$.  
Next, let '$\prec$' be a $Z$-recursive well-ordering.  To obtain $\ATR_{0}$, it suffices to show that, starting at $Z$, the Turing jump can be iterated along $\prec$.  
To this end, let $\phi(X,n,m)$ be the formula 
\begin{center}
\emph{decode $n$ as $(a,e)$; if $X$ encodes an iteration of the Turing jump \(starting at $Z$\) for the restriction of $\prec$ below $a$, and the Turing machine $e$ with oracle $(Z,X)$ halts, then it halts in at most $ m$ steps.}
\end{center}
We have $(\forall n\in \N)(\exists m\in \N)(\forall X\subset\N)  \phi(X,n,m)$) because otherwise we would get conflicting arithmetic transfinite recursions for an initial segment of $\prec$, which we can then compare (in $\ACA_0$) to get an infinite $\prec$-descending sequence.   Observe that item \eqref{kuhl} yields $\ACA_{0}$ via a similar proof.  

\smallskip

Now let $g$ be as in Principle \ref{zefdes}.  Then the iterated Turing jump is computable from $Z,g$ by converting transfinite recursion into bounded recursion using $g$.  Indeed, by $\WKL_{0}$, a failure of the bounded recursion to terminate would give an infinite $\prec$-descending path.  Next, if there are no inconsistencies below $a$, then the Turing jump can be iterated from $Z$ along $\prec$ up to $a$ (uniquely, per above), and thus no inconsistencies occur at $a$ either.  
Thus, each inconsistency gives a $\prec$-lower inconsistency, and since an inconsistency is an arithmetical property, an infinite $\prec$-descending sequence.

\smallskip

For the final item (a special case of $\BNC$), note that $X$ for the above is unique, and the required tests are $\Pi^0_2(Z)$, and convert to using reals as in the proof of Theorem~\ref{Taranie} below.  Here, $\Pi^0_2$ corresponds to $\lim \sup$ of continuous functions.  Observe that $\ACA_0$ is derivable as in the previous paragraphs.
\end{proof}
Secondly, we have the following conservation result. 
\begin{thm}\label{hunterX}~
 The system $\ACAo+\ENC^{\omega}+\QFAC^{0,1}$ is conservative over $\ATR_{0}$.
\end{thm}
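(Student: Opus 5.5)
The plan is to follow the standard model-theoretic route to conservation results in higher-order RM, as in Hunter's treatment of $\ACAo$ over $\ACA_{0}$ (\cite{hunterphd}). We start from a countable model $\mathcal{M}=(M,\mathcal{S})$ of $\ATR_{0}$ (second-order part $\mathcal{S}\subseteq \mathcal{P}(M)$). From it we build a model of $\ACAo+\ENC^{\omega}+\QFAC^{0,1}$ with the same first- and second-order part. Any second-order sentence provable in the larger system then holds in $\mathcal{M}$, and completeness gives conservativity. To make the second-order part stable we first replace $\mathcal{M}$ by a suitable elementary extension, or exploit that $\ATR_{0}$-models can be chosen closed under relative hyperarithmeticity in a strong sense. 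Concretely, we would pass to a countable $\omega$-saturated, or at least recursively saturated, model, so that definable-in-$\mathcal{M}$ choice sequences indexed by $M$ can be realised.

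For the type-two objects, we interpret $\N^{\N}\di \N^{\N}$ and $\N^{\N}\di\N$ as the functionals that are \emph{arithmetically definable} over $\mathcal{M}$ with parameters from $\mathcal{S}$. Equivalently, they are the total functionals computable from $\exists^{2}$ relative to set parameters, in the style of Kleene's S1--S9 restricted to $\exists^{2}$. This model satisfies $\ACAo$ because $\exists^{2}$ itself is arithmetically definable and $\mathcal{S}$ is closed under arithmetical comprehension. Every formula of the language with such functional parameters then unfolds to a second-order formula over $\mathcal{M}$ of the same quantifier shape in the set quantifiers. In particular:
\begin{itemize}
\item a quantifier-free formula with functional parameters becomes arithmetical;
\item a $\Pi_{1}^{1}$-formula with higher-order parameters becomes a $\Pi_{1}^{1}$-formula with set parameters.
\end{itemize}

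With this translation, $\ENC^{\omega}$ in the new model reduces to $\ENC$ in $\mathcal{M}$, which holds by Theorem~\ref{hofff}, since $\ATR_{0}\asa\ENC$ over $\RCA_{0}$.

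For $\QFAC^{0,1}$, a quantifier-free $\varphi(f,n)$ with functional parameters translates to an arithmetical formula. The hypothesis $(\forall n)(\exists f)\varphi(f,n)$ is then $\Pi^{1}_{2}$-ish, with an inner $\Sigma^{1}_{1}$ matrix, and we need a sequence $(f_{n})$ in $\mathcal{S}$. This is $\Sigma_{1}^{1}$-$\mathsf{AC}_{0}$ with arithmetical matrix, which is provable in $\ATR_{0}$ (\cite{simpson2}*{V.8}). So $\QFAC^{0,1}$ holds provided the translation of quantifier-free formulas is genuinely arithmetical. This requires checking that evaluation of a type-two functional term at a set in $\mathcal{S}$ stays arithmetical and uniformly so in the parameters. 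We would verify this by induction on terms in the definable-functional model.

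The main obstacle is ensuring that the class of functionals is closed under application, $\lambda$-abstraction and the recursor, so that it models $\RCAo$. At the same time the class must stay small enough that the translation keeps $\Pi_{1}^{1}$ as $\Pi_{1}^{1}$. Extensional arithmetical definability with parameters in $\mathcal{S}$ should work, but totality and extensionality have to be checked inside a possibly nonstandard $\mathcal{M}$. This is exactly where Hunter's ECF-like interpretation is adapted. Should that fail, the fallback is to realise the functionals via codes in $\mathcal{S}$ for continuous-relative-to-$\exists^{2}$ operations and use the interpretation from \cite{hunterphd} verbatim, then re-check $\ENC^{\omega}$ through Theorem~\ref{hofff}.
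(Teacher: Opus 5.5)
Your proposal is correct and follows essentially the same route as the paper. You build Hunter's term model over a model of $\ATR_{0}$, observe that quantifier-free (resp.\ $\Pi_{1}^{1}$) formulas with higher-order parameters unfold to arithmetical (resp.\ $\Pi_{1}^{1}$) formulas over $\M$, and then use that $\ATR_{0}$ proves $\SAC$ to get $\QFAC^{0,1}$, and $\ENC\asa\ATR_{0}$ (Theorem~\ref{hofff}) to get $\ENC^{\omega}$.

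Two side remarks are off, though not load-bearing. First, the saturation step is unnecessary. Second, functionals Kleene S1--S9 computable in $\exists^{2}$ are \emph{not} equivalent to arithmetically definable ones: they yield hyperarithmetical values such as $\emptyset^{(\omega)}$. The right class is Hunter's, namely functionals primitive recursive in the sense of Kleene relative to $\exists^{2}$ and set parameters.
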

\begin{proof}
Hunter proves in \cite{hunterphd}*{Theorem 2.5 and Corollary 2.6-2.7} that $\ACAo$, $\ACAo+\QFAC^{0,1}$, and $\Z_{2}^{\Omega}$ are conservative over respectively $\ACA_{0}$, $\SAC$, and $\Z_{2}$.  
We will sketch Hunter's proof and show it is readily adapted to $\ACAo+\ENC^{\omega}+\QFAC^{0,1}$.  
We should mention that $\ATR_{0}$ proves $\SAC$ and the latter is conservative over \textsf{PA}; this is not true in the presence of full induction.  
The minimal $\omega$-model of $\SAC$ consists of all hyperarithmetical sets.  All these facts may be found in \cite{simpson2}.  

\smallskip

First of all, the main part of Hunter's proof consists of the following (lengthy, hence omitted) construction, where we note that $\ACA_0$ is formalised using function rather than set variables, a mere technicality.  
\begin{center}
\emph{Let $\M$ be a \(second-order\) model of $\ACA_0$; there exists a term model $\mathcal{N}$ of $\ACAo$ whose second-order part, i.e.\ the collection of all type-$0$ and type-$1$ elements, is isomorphic to~$\M$.}
\end{center}
This construction then immediately yields that $\ACAo$ is conservative over $\ACA_{0}$.  
Indeed, if a second-order sentence $\Phi$ is provable from $\ACAo$ then it holds in all models of $\ACAo$, so it holds in the associated term model $\mathcal{N}$. 
Since $\Phi$ involves only second-order objects, it holds in $\M$ by isomorphism; since~$\M$ was arbitrary, $\Phi$ holds in all models of $\ACA_0$ and hence is provable from $\ACA_0$ by completeness.

\smallskip

Secondly, $\ACAo+\QFAC^{0,1}$ implies $\SAC$, since the former is simply a higher-order generalisation of the latter. 
The theory $\SAC$ also includes $\ACA_{0}$, which is implied by $(\exists^{2})$. 
Now let $\M$ be a model of $\SAC$; construct a term model $\mathcal{N}$ from $\M$ exactly as in the previous part of the proof. Then $\mathcal{N}$ satisfies $\ACAo$, since $\M$ satisfies $\ACA_0$. It remains only for us to show that $\mathcal{N}$ satisfies $\QFAC^{0,1}$.  

\smallskip

Let $\Phi$ be a quantifier-free formula in the language of $\mathcal{N}$ and suppose for every $n \in \N$ there is a type-$1$ function $f$ such that $\Phi(n, f)$ holds. We will show that there is a type-$(0 \to 1)$ functional $F$ such that for all $n$, $\Phi(n, F(n))$ holds.  The following observation by Hunter, which we call $(\textsf{O})$, is now crucial.
\begin{center}
\emph{For any assignment to $\Phi$'s other parameters, $\Phi(n, f)$ is equivalent to an arithmetical formula $\Phi'$ in the language of $\M$.} 
\end{center}
Indeed, since $\M$ satisfies $\SAC$, there is in $\M$ a type-$1$ function $g$ such that for all $n$, $\Phi'(n, g_n)$ holds, where $g_n(x) = g(\langle x, n \rangle)$ using the standard pairing function $\langle \cdot, \cdot \rangle$ on $\N$.   Define $F$ by the rule:
$
F(n) (x) = g_n(x) = g(\langle x, n \rangle ),
$
and we are done.

\smallskip

Finally, to show that $\ACAo+\ENC^{\omega}+\QFAC^{0,1}$ is conservative over $\ATR_{0}$, we observe that $\ENC\asa \ATR_{0}$ by Theorem \ref{hofff}, while 
 $\ENC^{\omega}$ reduces to $\ENC$ in Hunter's term model $\mathcal{N}$ by the previous centred observation $(\textsf{O})$. 
\end{proof}
The previous proof can be generalised to other higher-order principles (see \cite{samBOOK}*{Appendix}).  We intend to extend these results in a future publication. 

\smallskip

Thirdly, while $\ENC$ and $\BNC$ are equivalent by Theorem \ref{hofff}, $\ENC^{\omega}$ seems stronger than $\BNC^{\omega}$.  Nonetheless, both cannot be proved in relatively strong systems by Theorem \ref{slif}. 
Recall that $\NIN_{[0,1]}$ from \cite{dagsamX} is the statement that there is no injection from $[0,1]$ to $\N$.  
Similarly, the stronger principle $\cocode_{0}$ from \cite{dagsamXI} is the statement that a countable set of reals can be enumerated.  It is known that $\cocode_{0}$ implies $\NIN_{[0,1]}$ and both are unprovable in $\Z_{2}^{\omega}+\QFAC^{0,1}$, but provable in $\Z_{2}^{\Omega}$.
\begin{thm}\label{slif}
We have the following.
\begin{enumerate}
\renewcommand{\theenumi}{\alph{enumi}}
\item The system $\ACAo+\ENC^{\omega}$ proves $\cocode_{0}$. 
\item The system $\ACAo+\ENC^{\omega}$ proves $\ATR_{0}$.  
\item The system $\FIVE^{\omega}+\ENC^{\omega}$ proves $\SIX$.  
\item The system $\ACAo+\BNC^{\omega}$ proves $\NIN_{[0,1]}$.  
\end{enumerate}
\end{thm}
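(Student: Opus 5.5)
I would prove the four items separately, using the higher-order parameters in $\ENC^{\omega}$ and $\BNC^{\omega}$ together with Kleene's $\exists^{2}$ (or the Suslin functional for item (c)) to bring the relevant formulas into the form $(\forall n)(\exists m)\varphi(n,m)$ with $\varphi\in\Pi_{1}^{1}$.

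\textbf{Item (b).} This is immediate. The second-order principle $\ENC$ is a special case of $\ENC^{\omega}$, namely the one with no higher-order parameters, and Theorem~\ref{hofff} shows $\ENC\asa\ATR_{0}$ over $\RCA_{0}$.

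\textbf{Items (a) and (d).} The idea is to code reals of $[0,1]$ by binary sequences $f\in 2^{\N}$, modulo the usual identification, and to use a third-order parameter $Y:\R\di\N$.

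For (d), suppose $Y:[0,1]\di\N$ is an injection. Then for every $n\in\N$ there is at most one $x$ with $Y(x)=n$. Consider the formula
\[
\varphi(n,m)\equiv (\forall x\in[0,1])\big(Y(x)=n\di \textup{the $n$-th dyadic digit... }\big).
\]
A cleaner route is to make the choice function $g$ build a real avoiding all values of $Y$. Concretely, I would use
\[
(\forall n)(\exists m)\big[(\forall x\in[0,1])(Y(x)=n \di x\notin I_{n,m})\big],
\]
where, given already-chosen nested closed intervals, $I_{n,m}$ ranges over $2^{-n}$-refinements. The formula in brackets is $\Pi$-type in $Y$ (it quantifies over reals), and the existence of a suitable $m$ is trivial because the unique $x$ with $Y(x)=n$ (if it exists) lies outside one of two disjoint candidate subintervals. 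A bounding function $g$ from $\BNC^{\omega}$ only gives finitely many candidates at each stage. I would handle this by making the formula self-contained: let $n$ code a finite binary string $\sigma$ and let $m\in\{0,1\}$ say ``some real with $Y$-value $|\sigma|$ does not lie in the interval of $\sigma*m$''. Then $g(\sigma)\le 1$ is bounded anyway. The real issue is that I need an exact witness, so with $\BNC^{\omega}$ I must extract one using $\exists^{2}$. Since the bound is finite, I can instead search among $m\le g(n)$ for triples of intervals, choosing $m$ to index one of three pairwise separated subintervals; at least two of them avoid the point. The bound lets $\exists^{2}$ decide nothing higher-order, so the sticking point is selecting a correct $m\le g(n)$ arithmetically.

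To resolve this, I would run the construction with $\Pi$-formulas whose witnesses are plentiful. Let $\varphi(n,m)$ say that $m$ codes a rational interval $J$ of length $2^{-n-2}$, inside the interval chosen at stage $n$, with no $x\in J$ satisfying $Y(x)\le n$. Since each of the finitely many values $\le n$ has at most one preimage, most candidate $J$ qualify. A bounding $g$ then yields a finite list of candidates. Because the construction depends on earlier choices, I would let $n$ range over all finite histories rather than a single stage. Then, in $\ACAo$, I would iteratively pick, by diagonal counting, a candidate $J$ among the first $g(n)$ that nests correctly. Failure is possible only via points with small $Y$-value, and a pigeonhole count on the $\le n+1$ such points guarantees an interval among many disjoint candidates. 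The intersection yields $x$ with $Y(x)$ different from every $n$, a contradiction. For (a), $\ENC^{\omega}$ applied to ``$m$ codes a real $x$ with $Y(x)=n$, or no such real exists'' does not directly work, since $m$ is a number. Instead I would apply $\ENC^{\omega}$ digit-wise: with $n=\langle k,i\rangle$ and $m\in\{0,1\}$ the $i$-th bit of the unique $x\in A$ with $Y(x)=k$ (or $0$ if there is none), the property ``all $x\in A$ with $Y(x)=k$ have bit $i$ equal to $m$'' is a $\Pi$-formula, and it holds for some $m$ by injectivity. The choice function then enumerates $A$, after $\exists^{2}$ checks membership and realness.

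\textbf{Item (c).} Using $\SS^{2}$, every $\Pi_{2}^{1}$-formula $(\forall X)(\exists Y)\theta$ becomes $(\forall X)(\SS(\ldots)=0)$, which is a $\Pi$-formula with a higher-order parameter. Applying $\ENC^{\omega}$ with $m\in\{0,1\}$ to ``$m=1\asa\psi(n)$'' then yields the $\Pi_{2}^{1}$ comprehension set $\{n:\psi(n)\}$. The witnessing formula is $[m=0\wedge\neg\psi(n)]\vee[m=1\wedge\psi(n)]$. Here $\neg\psi$ is $\Sigma$, so this step needs care, and it is the main obstacle. I would use $\SS^{2}$ to rewrite $\neg\psi(n)$ as $(\exists X)(\SS(\ldots)\ne 0)$, and then use the trick from Theorem~\ref{hofff} (separation style): apply $\ENC^{\omega}$ to the $\Pi$-formula $\varphi(n,m)\equiv m=1\di\psi(n)$ combined with $m=0\di(\forall X)\neg\chi(X,n)$, where $\chi$ is a suitable $\Sigma$-part. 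I expect this reduction, expressing both $\psi$ and $\neg\psi$ as $\Pi$-formulas relative to $\SS$ so that the exact choice decides them, to be the hardest part.
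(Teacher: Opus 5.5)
Your items (a) and (b) are fine. Item (a) is the paper's argument, using binary digits in place of rational $2^{-k}$-approximations. The digits (integer part included) are computed with $\exists^{2}$ under a fixed convention for dyadic rationals, so the formula is extensional. Item (b), via $\ENC\di\ATR_{0}$ from Theorem~\ref{hofff}, is correct and more direct than the paper, which obtains (b) from $\cocode_{0}$ through an external result.

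\textbf{Item (d).} This stalls exactly where you say it does, and the pigeonhole count does not repair it. $\BNC^{\omega}$ only tells you that \emph{some} $m\le g(n)$ satisfies $\varphi(n,m)$. Since $\varphi$ quantifies over all reals and involves $Y$, no search using $\exists^{2}$ can tell you which one. Knowing that among $n+2$ disjoint candidates at least one avoids the at most $n+1$ points with $Y$-value $\le n$ is an existence statement, not a selection procedure. Choosing a path of good candidates through the finitely branching tree of histories would need a tree principle for $\Pi$-definable trees (something like $\Phi_{1}$-$\WKL$), which you do not have.

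The missing idea is to make the witness property monotone, so that no selection is needed. The paper uses, for rationals $a<b$ and $n\in\N$,
\[
(\exists m\in\N)\big(a<q_{m}<b\wedge(\forall x\in[q_{m},b))(Y(x)\ne n)\big),
\]
which holds by injectivity: take $q_{m}$ above the unique preimage of $n$ if it lies in $(a,b)$. A valid $q_{m}$ stays valid when increased inside $(a,b)$. Hence the largest $q_{m}$ with $m\le g(a,b,n)$ and $a<q_{m}<b$ is automatically a witness, and it is computable from $g$. Set $a_{0}=0$, $b_{0}=1$, let $a_{n+1}$ be this maximum for $(a_{n},b_{n},n)$, and let $b_{n+1}=(a_{n+1}+b_{n})/2$. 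The states are then just pairs of rationals, so no histories are needed, and the limit $y$ satisfies $Y(y)\ne n$ for every $n$.

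\textbf{Item (c).} The separation-style trick cannot give $\SIX$. For the choice function to decide a $\Pi^{1}_{2}$-formula $\psi(n)$, the clause attached to $m=0$ must be a $\Pi$-formula relative to $\SS^{2}$, hence in effect $\Pi^{1}_{2}$. It must also hold exactly when the $\Sigma^{1}_{2}$-formula $\neg\psi(n)$ does, which is only possible for $\Delta^{1}_{2}$ properties. So your construction yields $\Sigma^{1}_{2}$-separation, hence $\Delta^{1}_{2}$-comprehension, not comprehension for arbitrary $\Pi^{1}_{2}$-formulas.

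The missing idea is to eliminate the set quantifier through uniqueness, as in (a). The paper derives (c) from $\cocode_{0}$ plus $\SS^{2}$ via a known result. A direct version runs as follows:
\begin{enumerate}
\item Write the $\Sigma^{1}_{2}$-formula as $(\exists X)\phi(n,X)$ with $\phi\in\Pi^{1}_{1}$.
\item Uniformise $\phi$ by a $\Pi^{1}_{1}$-formula $\phi'$ with at most one witness for each $n$ (Kondo--Addison, provable in $\FIVE$).
\item Apply $\ENC^{\omega}$ with parameter $\SS^{2}$ to $(\forall n,k)(\exists i\le 1)(\forall X)(\phi'(n,X)\di X(k)=i)$ to obtain candidate witnesses $X_{n}$.
\item Then $(\exists X)\phi(n,X)\asa\phi'(n,X_{n})$, which $\SS^{2}$ decides uniformly in $n$.
\end{enumerate}
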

\begin{proof}
Item (a) implies items (b) and (c) by \cite{dagsamX}*{\S3.2.5} and \cite{dagsamXI}*{\S1.3.3}.  To establish item (a), let $A\subset \R$ be such that $Y:\R\di \N$ is injective on $A$.  Now consider
\[\textstyle
(\forall n, k\in \N)(\exists q\in \Q)(\forall x\in A)(Y(x)=n\di  |q-x|<\frac{1}{2^{k}} ).
\]
which has the right syntactic form to apply $\ENC^{\omega}$.  The associate sequence readily yields an enumeration of $A$. 

\smallskip

Finally, to establish item (d), let $(q_{n})_{n\in \N}$ be an enumeration of the rationals in $[0,1]\cap \Q$.  
Now let $Y:[0,1]\di \N$ be injective and consider
\[
(\forall a, b \in \Q\cap [0,1], n\in \N)(\exists m\in \N)( a<_{\R} q_{m}<_{\R} b\wedge (\forall x\in [q_{m}, b))(  Y(x)\ne n)   ).  
\]
Apply $\BNC^{\omega}$ and let $g:(\Q^{2}\times \N)\di \N$ be the associated function.  Define $h(a, b,n)$ as the maximum of those $q_{m}$ with ${m\leq g(a, b, n) \wedge a <q_{m}<b} $.  
Now consider the sequences defined by $a_{0}=0$, $b_{0}=1$ and $a_{n+1}:=h( a_{n}, b_{n}, n ) $ and $b_{n+1}=\frac{a_{n+1}+b_{n}}{2}$, which converge to say $y\in [0,1]$.  
By construction, we have that $Y(y)\ne n$ for all $n\in \N$, which yields a contradiction.  
\end{proof}
Finally, we speculate that $\BNC^{\omega}$ cannot prove that a countable set has measure zero, which is a statement intermediate between $\cocode_{0}$ and $\NIN_{[0,1]}$.

\subsection{Higher-order equivalences}\label{enc2}
We obtain equivalences between numerical choice $\BNC^{\omega}$ and e.g.\ the fact that 
Riemann integrable or sub-continuous functions are bounded above by a continuous function on the reals as in \eqref{diff} (Theorem \ref{Taranie}).  By Theorem~\ref{hofff}, the latter 
statement is surprisingly strong, especially in light of the following slight restriction.  We note that quasi-continuous functions are simply continuous but not vice versa, while both are said to be closely related (\cite{nieuwebron}).    
\begin{thm}[$\RCAo+\WKL_{0}+\QFAC^{0,1}$]\label{weaklame}~
\begin{itemize}
\item A sub-continuous $f:[0,1]\di \R$ is bounded on $[0,1]$. 
\item A Riemann integrable $f:[0,1]\di \R$ is bounded on $[0,1]$. 
\item For a quasi-continuous and Riemann integrable $f:\R\di \R$, there is continuous $g:\R\di \R$ with $f(x)\leq g(x)$ for all $x\in \R$. 
\end{itemize}
If we replace `bounded' by `dominated by a continuous function', the items are provable in $\RCAo+\QFAC^{0,1}$. 
\end{thm}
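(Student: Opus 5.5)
The plan is to handle each item separately, splitting first on whether $(\exists^{2})$ holds. If every function $\R\di\R$ is continuous (the case $\neg(\exists^{2})$, by \cite{kohlenbach2}*{Prop.\ 3.14}), then $f$ is continuous. In that case $\WKL$ gives boundedness on $[0,1]$ via \cite{dagsamXIV}*{\S2}. The `dominated by a continuous function' versions are trivial: take $g:=f$, or $g:=f$ restricted and extended constantly for the $[0,1]$-versions. Hence we may assume $(\exists^{2})$ throughout, which makes formulas quantifying over $\N$ and $\Q$ decidable and lets us use sequential compactness of $[0,1]$ (available in $\ACAo$).

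For the first item, suppose $f$ is sub-continuous but unbounded on $[0,1]$. Then $(\forall n\in\N)(\exists x\in[0,1])(|f(x)|>n)$, and $\QFAC^{0,1}$ yields a sequence $(x_{n})_{n\in \N}$ with $|f(x_{n})|>n$. By sequential compactness (from $\ACA_{0}$), some sub-sequence $(y_{n})_{n\in \N}$ converges to $x_{0}\in[0,1]$. Sub-continuity at $x_{0}$ then gives a convergent, hence bounded, sub-sequence of $(f(y_{n}))_{n\in \N}$, which contradicts $|f(y_{n})|\to\infty$.

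For the second item, I would follow Uftring's argument from footnote \ref{uftring}. Fix $N_{0}$ for $k=0$ and the equidistant partition of mesh $2^{-N_{0}}$ with tags $t_{i}$. For any $x$ in the $i$-th interval, replacing tag $t_{i}$ by $x$ changes the Riemann sum by $|f(x)-f(t_{i})|2^{-N_{0}}<1$. This bounds $|f(x)|$ by $2^{N_{0}}+M_{0}$, where $M_{0}$ is the finite maximum of the $|f(t_{i})|$. This step needs no $\WKL$ or choice at all.

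For the `dominated' variants on $[0,1]$ in $\RCAo+\QFAC^{0,1}$, I would first show local boundedness at every point. The argument above does this for Riemann integrable $f$. For sub-continuous $f$, local boundedness follows by the contradiction argument above, this time with a sequence converging to the fixed point, so no compactness is needed. Next, $\QFAC^{0,1}$ gives, for rational intervals, bounds whenever they exist, and I would patch these into a piecewise linear continuous $g$ over a cover. The hard part is doing this without compactness. My first attempt would define $g$ on the dyadic intervals at level $n$ and let $g$ grow in $n$, but I suspect the intended route is simpler: on $[0,1]$, item two gives a global bound without $\WKL$, so a constant $g$ works. For sub-continuous $f$, the same trick requires an alternative, and this is where I expect the main obstacle.

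For the third item, $f$ is bounded on each $[-n,n]$ by item two, with a bound $M_{n}$ obtained uniformly in $n$. This is the delicate point, since Riemann integrability gives no modulus. I would use quasi-continuity to show that the supremum of $f$ on $[-n,n]$ equals the supremum over rationals. Indeed, quasi-continuity at $x_{0}$ yields open intervals, hence rationals, on which $f$ is within $2^{-k}$ of $f(x_{0})$. So $\sup_{[-n,n]}f=\sup_{q\in\Q\cap[-n,n]}f(q)$, which $(\exists^{2})$ makes available as a sequence once finiteness is known from item two. Then $g$, defined as the piecewise linear function through $(\pm n, \max_{m\le n+1}M_{m}+1)$, is continuous and dominates $f$. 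I expect this uniformity step, using quasi-continuity to reduce the suprema to countable ones, to be the crux.
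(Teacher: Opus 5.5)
Your first and third items follow the paper. You split on $(\exists^{2})$ and use $\QFAC^{0,1}$ plus sequential compactness from $\ACA_{0}$ against a sub-continuous $f$. For quasi-continuous $f$ you reduce $\sup_{[-n,n]}f$ to a supremum over rationals, which $(\exists^{2})$ computes uniformly in $n$.

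The one loose end is self-inflicted. In the paragraph on the `dominated' variants you call the sub-continuous case on $[0,1]$ the main obstacle, believing it needs an argument without compactness. Your own opening case split already settles it, and this is exactly how the paper proves the final sentence of the theorem:
\begin{itemize}
\item If $\neg(\exists^{2})$, then $f$ is continuous and $g:=f$ works.
\item If $(\exists^{2})$, you are in $\ACAo$, which proves $\WKL_{0}$ and sequential compactness of $[0,1]$. Your item-one argument used only $\QFAC^{0,1}$ and sequential compactness, so it yields a bound $M$, and the constant $g:=M$ dominates $f$.
\end{itemize}
The axiom $\WKL_{0}$ is only needed for \emph{boundedness} in the continuous branch, where domination is trivial anyway. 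The local-boundedness and dyadic-patching detour can simply be dropped.

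For the second item you take a genuinely different route. The paper treats the first two items together: $\QFAC^{0,1}$ and sequential compactness produce a point $y$ at which $f$ is not locally bounded, and Riemann sums are then perturbed arbitrarily by choosing tags near $y$. You instead use Uftring's argument from Footnote \ref{uftring}, perturbing a single tag of one fixed equidistant partition of mesh $2^{-N_{0}}$. Your argument is more explicit and works in $\RCAo$ alone, without $\WKL_{0}$, $\QFAC^{0,1}$ or any case split. Consequently the boundedness input for the third item needs neither axiom either. The paper's route buys uniformity instead: sub-continuous and Riemann integrable functions are handled by one and the same compactness argument, via failure of local boundedness.
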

\begin{proof}
To establish the first two items, in case $f:[0,1]\di \R$ is continuous, it is bounded on $[0,1]$, as proved in \cite{dagsamXIV}*{\S2}.  
In case the former is discontinuous, we obtain $(\exists^{2})$ by \cite{kohlenbach2}*{Prop.\ 3.14}.  
Note that $(\exists^{2})\di \ACA_{0}$, i.e.\ we may use sequential compactness by \cite{simpson2}*{III.2.2}.
Now suppose $f$ is unbounded on $[0,1]$, i.e.\ $(\forall n\in \N)(\exists x\in [0,1])(|f(x)|>n)$.  
Apply $\QFAC^{0,1}$ and let $(x_{n})_{n\in \N}$ be the resulting sequence, i.e.\ satisfying $|f(x_{n})|>n$ for all $n\in \N$.  
By sequential compactness, there is a convergent subsequence, say with limit $y\in [0,1]$.  Clearly, $f$ is not locally bounded at $y\in [0,1]$. 
Moreover, $f$ is not Riemann integrable on $[0,1]$ as Riemann sums can be changed by an arbitrary amount if we choose the right points close enough to $y$. 

\smallskip

For the third item, observe that by definition $(\exists x\in [a,b])(f(x)>q)$ is equivalent to $(\exists r\in [a,b]\cap \Q)(f(r)>q)$ for quasi-continuous $f:\R\di \R$.  
Hence, the supremum $\sup_{x\in [a,b]}f(x)$ can be defined using $(\exists^{2})$ and a dominating function for $f$ as in the third item is then straightforward. 

\smallskip

For the final sentence, work in $\RCAo+\QFAC^{0,1}$ and invoke classical logic in the form of $(\exists^{2})\vee \neg(\exists^{2})$.  If $(\exists^{2})$, proceed as in the previous paragraph.
If $\neg(\exists^{2})$, all functions on $\R$ are continuous by \cite{kohlenbach2}*{Prop.\ 3.14}, i.e.\ there is nothing to prove.  
\end{proof}
We now establish the following equivalences.  On one hand, the items in Theorem~\ref{Taranie} are quite strong as they imply $\ATR_{0}$ by Theorem~\ref{hofff}. 
On the other hand, Theorem \ref{weaklame} implies that the restriction of \eqref{slofi} and \eqref{slofidva} to the unit interval is weak.  
\begin{thm}[$\ACAo+\QFAC^{0,1}$] \label{Taranie}
The following are equivalent.
\begin{enumerate} 
\renewcommand{\theenumi}{\alph{enumi}}
\item The bounded numerical choice principle $\BNC^{\omega}$.  \label{fdZ}
\item For sub-continuous $f:\R\di \R$, there is continuous $g:\R\di \R$ with $(\forall x\in \R)(f(x)\leq g(x))$.\label{slofi}
\item For everywhere Riemann integrable $f:\R\di \R$, there is continuous $g:\R\di \R$ with $(\forall x\in \R)(f(x)\leq g(x))$.\label{slofidva}
\item For everywhere Riemann integrable $f:\R\di \R$, there is a modulus of Riemann integration.\label{sokolz}
\item For a continuous ae function $f:\R\di \R$ that is bounded on every closed interval, there is a modulus of Riemann integration.\label{sokolz2}
\item Item \eqref{sokolz} restricted to $f:[0,1]\di \R$. \label{sokolz3}
\item Item \eqref{sokolz2} restricted to $f:[0,1]\di \R$. \label{sokolz4}
\item Item \eqref{slofi} for `sub-continuous' replaced by `locally bounded'.\label{Zachionz}
\item Any of items \eqref{slofi}-\eqref{Zachionz} restricted to simply continuous functions.  \label{zachionz}
\item Any of items \eqref{slofi}-\eqref{Zachionz} restricted to \(Baire\) measurable functions.  \label{zachionz2}
\end{enumerate}
\end{thm}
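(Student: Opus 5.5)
I will organise the proof as a cycle of implications: \eqref{fdZ} gives each of \eqref{slofi}, \eqref{slofidva}, \eqref{sokolz}, \eqref{sokolz2}, \eqref{Zachionz}; the restricted items \eqref{sokolz3}, \eqref{sokolz4}, \eqref{zachionz}, \eqref{zachionz2} are trivial consequences of their unrestricted versions; and each restricted item gives back \eqref{fdZ}. Since restrictions are weaker, the real work is two directions. The first is $\BNC^{\omega}$ implies the strongest unrestricted items. The second is that the weakest items, namely \eqref{zachionz} and \eqref{zachionz2} for \eqref{slofi}, \eqref{slofidva}, \eqref{Zachionz}, as well as \eqref{sokolz3} and \eqref{sokolz4}, each imply $\BNC^{\omega}$.

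\emph{Forward direction.} Let $f$ be locally bounded, which covers the sub-continuous case since the proof of Theorem \ref{weaklame} yields local boundedness via $\QFAC^{0,1}$ and sequential compactness. The same holds for everywhere Riemann integrable $f$ by Uftring's argument in footnote \ref{uftring}. Then
\[
(\forall n\in \N)(\exists m\in \N)(\forall x\in [-n,n])(|f(x)|\leq m).
\]
Using $\exists^{2}$, the inner universal quantifier over reals becomes a quantifier over Baire space with a third-order parameter, so this formula is $\Pi_{1}^{1}$ with higher-order parameters. Applying $\BNC^{\omega}$ gives $g$ with $(\exists m\leq g(n))$, and hence $g(n)$ itself bounds $|f|$ on $[-n,n]$. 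A continuous piecewise linear function above $\max(g(n),g(n+1))$ on $[n,n+1]$ then dominates $f$. For the moduli in \eqref{sokolz} and \eqref{sokolz2}, apply $\BNC^{\omega}$ to
\[
(\forall k,a,b)(\exists N)(\forall P,Q)(\|P\|,\|Q\|\leq 2^{-N}\di |S(f,P)-S(f,Q)|<2^{-k}),
\]
with $a,b$ rational. Tagged partitions are finite sequences of reals, so this is again $\Pi_{1}^{1}$ modulo $\exists^{2}$. A bound $g$ on $N$ is itself a modulus, since the condition is monotone in $N$. Real endpoints reduce to rational ones up to boundedness, which is available by Theorem \ref{weaklame}. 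For \eqref{sokolz2}, Theorem \ref{bengas2} first yields Riemann integrability.

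\emph{Reversal.} Assume $(\forall n)(\exists m)\varphi(n,m)$ with $\varphi\in\Pi_{1}^{1}$ (higher-order parameters allowed). Using $\exists^{2}$, normalise $\varphi(n,m)$ as $(\forall x\in [0,1])(Y(x,n,m)=0)$, and by monotonisation assume $\varphi(n,m)\di\varphi(n,m')$ for $m'\geq m$; here $\varphi$ is replaced by $(\exists m''\leq m)\varphi(n,m'')$, which is still $\Pi_1^1$. Define $f$ to be zero except at isolated points. For each $n$, place in the interval $[n,n+1]$ points $x$ at which some local witness of failure is encoded: $f(x)=m$ whenever $x$ codes a counterexample to $\varphi(n,m-1)$. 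Since only finitely many $m$ fail for each $n$, the idea is to arrange the coding so that $f$ has at most finitely or countably many nonzero points, accumulating nowhere, on each $[n,n+1]$. Then $f$ is locally bounded, sub-continuous, continuous ae and Riemann integrable, quasi-continuity is not needed, and $f$ is simply continuous and Baire measurable because it is zero off a nowhere dense set. A continuous dominating $g$ then gives $\max_{[n,n+1]}g\geq m$ for every failing $m$, so $\lceil\max_{[n,n+1]} g\rceil$ bounds a good $m$.

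The main obstacle is the encoding. A counterexample to $\varphi(n,m)$ is a real, and there may be continuum many of them. To keep $f$ nonzero at only few points, I would first try putting $f(x_{n,m})=m$ at a single fixed point $x_{n,m}$ (for instance $n+2^{-m}$), which accumulates only at $n$. I would set $f(x_{n,m}):=m$ iff $\neg\varphi(n,m-1)$; this is decidable via a Suslin-type definition with parameter $Y$ and is allowed as a definition of $f$ in $\ACAo$ only if $\varphi$ can be evaluated. Since $\exists^{2}$ cannot decide $\Pi_1^1$, this needs care, and the fallback is the approach of Theorem \ref{hofff}\eqref{kuhl}: $f(x)=m$ where $x$ itself encodes $(n,m,\text{counterexample})$ through a continuous coding of Baire space into $[n,n+1]$, and monotonicity guarantees that the values are bounded near each point. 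For \eqref{sokolz3} and \eqref{sokolz4}, the same $f$ squeezed into $[0,1]$ by shrinking $[n,n+1]$ to $[2^{-n-1},2^{-n}]$ with heights scaled down to $2^{-n}\cdot m$ should make a modulus of integration reveal the bounds $m$.
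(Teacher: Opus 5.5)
Your forward direction is the paper's argument: bound $f$ on $[-n,n]$, apply $\BNC^{\omega}$ to that formula or to the $\eps$-$\delta$ definition of Riemann integrability, and smooth the result using $(\exists^{2})$. The reversal, however, has two genuine gaps.

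\textbf{The support of the test function is never fixed.} Your first version ($f(x_{n,m})=m$ iff $\neg\varphi(n,m-1)$) is not definable from $\exists^{2}$, as you note yourself. In the fallback, $f$ is nonzero exactly at the codes of counterexamples, which is typically an uncountable set. With a ``continuous coding of Baire space into $[n,n+1]$'' whose image is dense (e.g.\ continued fractions), this set can be dense and co-dense; take $Y$ whose counterexamples are exactly the eventually-zero sequences. Then $f$ is discontinuous everywhere on that interval, so it is not continuous ae, not Riemann integrable, and not simply continuous. Your verification of these properties was made for the ``countably many nonzero points, accumulating nowhere'' picture, which the fallback does not deliver. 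As written, only items (b) and (h) are reversed, because there $f$ takes finitely many values per unit interval by your monotonisation.

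The missing ingredient is the paper's: place the codes inside the Cantor set $\mathcal{C}$ in each unit interval and let $f$ vanish elsewhere. Since $\mathcal{C}$ is closed, null and nowhere dense, $f$ is continuous off $\mathcal{C}$. Hence it is continuous ae and Riemann integrable everywhere by Theorem~\ref{bengas2}. Moreover, each $f^{-1}(V)$ is $\mathcal{C}^{c}\cup U$ or $U$ with $U\subseteq\mathcal{C}$, which gives simple continuity and (Baire) measurability.

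With this fix, encoding the candidate $m$ into the point together with monotonisation is a valid alternative to the paper's $f(x)=(\mu m)\psi(\lfloor|x|\rfloor,m,\xi(x))$. The paper's version relies instead on the conjunction-closure normalisation \eqref{combine_f}. For items (d) and (e) you still need to say how a modulus yields bounds. This is the argument of Footnote~\ref{uftring}: the modulus at $k=0$ plus finitely many values of $f$ give the bound $2^{N_{0}}+M_{0}$.

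\textbf{The squashing for items (f) and (g) fails.} With heights $2^{-n}m$ on $[2^{-n-1},2^{-n}]$, the squashed function reaches $2^{-n}m_{0}(n)$, where $m_{0}(n)$ is the least witness for $n$; with your encoding every $m\leq m_{0}(n)$ is attained. Nothing prevents $m_{0}(n)/2^{n}$ from being unbounded, e.g.\ $\varphi(n,m)\equiv m\geq 2^{2^{n}}$. You also cannot rescale by $m_{0}(n)$, since that is exactly what you are trying to compute. So the squashed function is in general unbounded on $[0,1]$. It is then not Riemann integrable (Theorem~\ref{weaklame}) and not bounded on a closed interval, so items (f) and (g) cannot be applied to it.

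Making the heights bounded independently of $m$ removes the information from the heights. The value $m$ must instead be encoded in how spread out the nonzero set is, since that is what a modulus of integration detects: how many mesh cells carry oscillation. The paper does this by keeping the height at $1/(n+1)$ and placing it at the $m$ rational translates $(x+q_{i}) \bmod 1$ for $i<m$. These translates spread over $[0,1]$, so a larger $m$ forces a larger difference between Riemann sums at a fixed mesh, and a modulus at precision $2^{-k}$ with $2^{k}>n+1$ then bounds $m$. The resulting function is bounded by $1$ and continuous outside countably many translates of $\mathcal{C}$, hence continuous ae.
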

\begin{proof}
To prove items \eqref{slofi}-\eqref{zachionz2}, for sub-continuous and Riemann integrable functions $f:\R\di \R$, Theorem \ref{weaklame} implies the following:
\be\label{dino}
(\forall n\in \N)(\exists m\in \N)(\forall x\in [-n,n])(f(x)\leq m).
\ee
Apply item \eqref{fdZ} to \eqref{dino} and use $(\exists^{2})$ to modify the resulting sequence into a continuous function.  
For item \eqref{sokolz}, apply the former principle to the epsilon-delta definition of `Riemann integrability everywhere'.  
Item \eqref{sokolz2} then follows by Theorem \ref{bengas2}.  

\smallskip

To derive $\BNC^{\omega}$ from item \eqref{Zachionz}, let $\psi$ be arithmetical and such that $(\forall n\in \N)(\exists m\in \N)(\forall f\in 2^{\N})\psi(n, m, f)$.  We may further assume 
w.l.o.g.\ that 
\be\label{combine_f}
(\forall f_1 ,  f_2 \in 2^{\N})(\exists f_{3} \in 2^{\N}) (\forall  m \in \N) \big[ \psi(n, m, f_3) \asa  [\psi(n, m, f_1) \land \psi(n, m, f_2)]\big].
\ee
with an effective way to find an $f_3$ given $f_1$ and $f_2$.  Now define $f:\R\di \R$ as 
\be\label{counter}
f(x)= (\mu m) \psi(\lfloor |x|\rfloor,m,\xi(x))
\ee
where $\xi(x)$ is the binary representation of $|x|-\lfloor |x|\rfloor$, choosing a tail of zeros if relevant, or use the Cantor set construction below.  
By assumption, $f$ is locally bounded as it has an upper bound on each $[n, n+1]$.  Let $g:\R\di \R$ be as provided by item \eqref{Zachionz}.  
Since the former is continuous, $(\exists^{2})$ yields $h(n):= \lceil\sup_{x\in [-n,n]}g(x)\rceil $ by \cite{kohlenbach2}*{\S3}.  
Essentially by definition, using induction or choice to get a high enough value through \eqref{combine_f}, $h$ is also the choice function required by item \eqref{fdZ}.  
Sub-continuity trivially follows from local boundedness, i.e.\ item \eqref{slofi} also suffices.  

\smallskip

To show that item \eqref{zachionz} also implies item~\eqref{fdZ}, let $\mathcal{C}\subset [0,1]$ be the well-known Cantor set and define $\tilde{f}:\R\di \R$ as in \eqref{counter} but restricted to $(|x|-\lfloor |x|\rfloor)\in \mathcal{C}$ (and zero otherwise).  We now show that $\tilde{f}$ is both sub- and simply continuous. 
Since $\mathcal{C}$ is closed, $\tilde{f}$ is zero close enough to any $x$ with $(|x|-\lfloor |x|\rfloor)\in \mathcal{C}^{\textsf{c}}$ and hence continuous at $x$. 
In case $x$ is such that $(|x|-\lfloor |x|\rfloor)\in \mathcal{C}$, let $(x_{n})_{n\in \N}$ be any sequence converging to $x$.  By definition, there is $m_{0}\in \N$ such that for all $n\in \N$, we have $\tilde{f}(x_{n})\in \{ 0, \dots, m_{0}\}$.  
Hence, the sequence $(f(x_{n}))_{n\in \N}$ has a constant (and hence convergent) sub-sequence, i.e.\ $\tilde{f}$ is sub-continuous.  
For simple continuity, observe that $\tilde{f}^{-1}(V)=\mathcal{C}^{\textsf{c}}\cup U$ for open $V\subset \R$ with $0\in V$ and $U\subset \mathcal{C}$ is nowhere dense.   
In case $0\not \in V$, then $\tilde{f}^{-1}(V)= {\emptyset} \cup U=U\subset \mathcal{C}$ is also nowhere dense and as required for simple continuity (and Baire measurability).  
Since $\mathcal{C}$ is closed and measure zero, $\tilde{f}$ is measurable, i.e.\ item \eqref{zachionz2} implies item~\eqref{fdZ}.  
Using the epsilon-delta definition, $\tilde{f}$ is also Riemann integrable everywhere (and continuous ae and bounded on every interval) with zero integral.  We use induction or choice to convert boundedness on unit intervals to boundedness on all intervals.  
Thus, item \eqref{slofidva} also implies item \eqref{fdZ} and the same for the restrictions of the former.  
For items \eqref{sokolz} and \eqref{sokolz2}, Footnote \ref{uftring} shows that a modulus of (everywhere) Riemann integration for $g:\R\di \R$ yields a sequence $(N_{n})_{n\in \N}$ of upper bounds for $g$ on $[-n,n]$.  
Applied to $\tilde{f}$, one immediately obtains item \eqref{fdZ}.

\smallskip

For items $\eqref{sokolz3}$ and $\eqref{sokolz4}$, we follow the above argument using $\mathcal{C}$ but `squash' the function $f$ as follows.  Instead of defining $f(x)=m$ (or $m+1$), we select about $m$-many points (for each $x$ and $n$) and set $f$ to $1/(n+1)$ there.  For example, let $(q_n)_{n\in \N}$ be a fixed enumeration of $\Q$ and define $f((x+q_i) \, \mod \, 1) := 1/(n+1)$, where $i<m$ and conflicts are resolved by preferring higher values of $f$.  
Note that we use induction or choice to get boundedness of $m$ if $n$ is bounded.  Then the new function is continuous except (possibly) at rational translations ($\mod \, 1$) of $\mathcal{C}$.
\end{proof}
Next, we discuss some remarks on generalisations and variations of Theorem \ref{Taranie}.
Remark \ref{doooonggg} is motivated by Montalb\'an's notion of robustness in RM (\cite{montahue}).  
\begin{rem}\label{bipoli}\rm
First of all, for $\eqref{slofi} \lor \eqref{Zachionz} \di \eqref{fdZ}$ in Theorem \ref{Taranie} \(including with $\eqref{zachionz}$ and $\eqref{zachionz2}$\), $\QFAC^{0,1}$ can be replaced by $\SIND$.  
For the other items, to obtain item~$\eqref{fdZ}$, $\QFAC^{0,1}$ can be replaced by induction.  In particular, we only need the induction axiom for $(\forall m \in \N) (\exists f \in \N^{\N})  \varphi(f, m, n)$ with $\varphi$ arithmetical, as is readily verified.  

\smallskip 

Secondly, in items \eqref{sokolz} and $\eqref{sokolz2}$ of Theorem \ref{Taranie}, the existence of a modulus of Riemann integration \emph{cannot} be weakened to the existence of Riemann integral as a function.  The minimal model of $\ACAo$ containing all hyperarithmetic reals is a counterexample.  Uniformly, the measure of a $\Sigma^0_n$-set is $\Sigma^0_n$, which allows for obtaining the Riemann integral without obtaining a modulus of Riemann integration.

\smallskip

Thirdly, there is nothing special about $\R$: Theorem \ref{Taranie} generalises to Euclidean space via minimal adaptation.  The generalisation to metric spaces is non-trivial, as discussed in Section \ref{fetric}.    
In particular, the latter shows that various basic properties of metric spaces, e.g.\ pertaining to continuous functions, are equivalent to $\BNC^{\omega}$. 

\smallskip

Fourth, the existence of a modulus of Riemann integrability is non-trivial by Theorem \ref{Taranie}.  However, for a function of bounded variation $f:[0,1]\di \R$, such a modulus is readily defined.  
Indeed, given total variation $\delta$ and mesh $\eps$, the difference between the associated Riemann sums is bounded by $\delta\eps$.  
Hence, we cannot restrict Theorem \ref{Taranie} to the former function class, in contrast to the results in \cite{dagsamXI, samBIG, samBIG2} where bounded variation plays a central role in the study of principles similar in strength to $\BNC^{\omega}$, namely at the level of $\ATR_{0}$. 
\end{rem}
\begin{rem}[Robustness]\label{doooonggg}\rm
First of all, item \eqref{sokolz2} in Theorem \ref{Taranie} deals with `continuity ae', but slight restrictions can be imposed:  one can replace `measure zero' by `effectively measure zero' (see \cite{avi1337, nieyo}), where the latter means that 
there is a sequence $(I_{n,k})_{n,k\in \N}$ of open intervals such that for fixed $k\in \N$, the union $\cup_{n\in \N}I_{n, k}$ has total length at most $\frac{1}{2^{k}}$ and covers the set at hand.  
Clearly, the function $\tilde{f}$ from the proof of Theorem \ref{Taranie} is continuous outside an effectively measure zero set, as the discontinuity set of $\tilde{f}$ is included in the countable union of translations of $\mathcal{C}$.  

\smallskip

Secondly, Theorem \ref{Taranie} goes through for sub-continuity replaced by Cauchy-sub-regularity, the open-cover formulation of sub-continuity, local compactness, CC-regularity, and uniform local boundedness (see \cites{guptak, systemenouveau} for definitions).  These are admittedly merely variations on a theme, but do establish robustness.    

\smallskip

Thirdly, item \eqref{fdZ} of Theorem \ref{Taranie} readily follows from the following formulation of the Darboux criterion for Riemann integrability (see e.g.\ \cite{taoana1}*{\S11.3}), also similar to the \emph{squeeze theorem} (see e.g.\ \cite{bartle2}).
\begin{center}
\emph{For everywhere Riemann integrable $f:\R \di \R$ and $\eps>0$, there are step functions $s, t:\R\di \R$ such that $s(x)\leq f(x)\leq t(x)$ for all $x\in \R$ and $\int_{a}^{b}(t(x)-s(x))dx <\eps$ for all $a, b\in \R$.}
\end{center}
Hence, the previous squeeze theorem also implies the strong system $\ATR_{0}$.  

\smallskip

Fourth, Riemann integrable functions are well-known to be Lebesgue integrable.  
One definition of the latter integral (for non-negative functions) proceeds via lower and upper integrals, which in turn are the infimum and supremum of the integrals of simple functions that respectively dominate and majorise the function at hand.   Similarly, $L^{1}$-functions on Euclidean space can be approximated via step functions (\cite{taomes}*{Theorem 1.3.20}).  
These observations should yield plenty of theorems that imply $\BNC^{\omega}$, if we require some additional regularity properties.  

\smallskip

Finally, the above squeeze theorem resembles the Vitali-Carath\'eodory theorem (\cite{rudin}*{Theorem 2.25}).  However, the latter involves a lower semi-continuous dominating function.  
The principle $\ENC^{\omega}$ implies Tao's pigeon principle for the Lebesgue measure (\cite{taomes}), called $\PHP_{[0,1]}$ in \cite{samBIG2}.  
Moreover, assuming the latter and $\BNC^{\omega}$, one proves the Vitali-Carath\'eodory theorem for Riemann integrable functions.  
\end{rem}
In conclusion, there are a number of natural equivalences for $\BNC^{\omega}$, with many promising and robust variations. 

\subsection{Numerical Choice and metric spaces}\label{fetric}
We establish equivalences between $\BNC^{\omega}$ and basic properties of metric spaces.  We briefly introduce the latter in Section \ref{defmet} and obtain our results in Section \ref{maint}.
A more detailed introduction to metric spaces in higher-order RM may be found in \cites{samBOOK, sammetric, samHARD}. 
\subsubsection{Metric spaces and higher-order arithmetic}\label{defmet}
We introduce the well-known definition of metric space $(M, d)$ to be used in the below, namely Definition \ref{donkc}.  
The latter is really the textbook definition with some details like function extensionality made explicit. 
We shall generally only study the case where $M$ is a subset of $\R$, up to coding of finite sequences.  

\smallskip

Firstly, in our study of metric spaces $(M, d)$, we assume that the set $M$ comes with its own equivalence relation `$=_{M}$' and that the metric $d:M^{2}\di \R$ satisfies 
the axiom of extensionality relative to this relation as follows:
\[
(\forall x, y, v, w\in M)\big([x=_{M}y\wedge v=_{M}w]\di d(x, v)=_{\R}d(y, w)\big).
\]
Similarly to functions on the reals, `$F:M\di \R$' denotes a function that satisfies the following instance of the axiom of function extensionality:
\be\tag{\textup{\textsf{E}}$_{M}$}\label{koooooo}
(\forall x, y\in M)(x=_{M}y\di F(x)=_{\R}F(y)).
\ee
We recall that the study of metric space in second-order RM is at its core based on equivalence relations, as discussed explicitly in e.g.\ \cite{simpson2}*{I.4.3} or \cite{damurm}*{\S10.1}.   

\smallskip

Secondly, we now have the following (textbook) definition of metric space. 
\bdefi[$\RCAo$]\label{donkc}
A functional $d: M^{2}\di \R$ is a \emph{metric on $M$} if it satisfies the following properties for $x, y, z\in M$:
\begin{enumerate}
 \renewcommand{\theenumi}{\alph{enumi}}
\item $d(x, y)=_{\R}0 \asa  x=_{M}y$,
\item $0\leq_{\R} d(x, y)=_{\R}d(y, x), $
\item $d(x, y)\leq_{\R} d(x, z)+ d(z, y)$.
\end{enumerate}
\edefi
To be absolutely clear, we shall study metric spaces $(M, d)$ with $M\subset \N^{\N}$ or $M\subset \R$, unless explicitly stated otherwise. 
Thus, quantifying over $M$ amounts to quantifying over $\N^{\N}$ or $\R$, perhaps modulo coding of finite sequences, i.e.\ the previous definition can be made in third-order arithmetic.   For \emph{compact} metric spaces, this restriction is minimal as the cardinality of such spaces is at most that of the continuum by \cite{buko}*{Theorem 3.13}.  We mostly study countable unions of compact spaces in this paper.  

\smallskip

Next, Definition~\ref{char} generalises as follows, keeping in mind \eqref{koooooo}.  
\bdefi[Sets]\label{charc} Let $(M, d)$ be a metric space.  
\begin{enumerate}
\renewcommand{\theenumi}{\alph{enumi}}
\item A set $A\subset M$ is given by a function $F_{A}:M\di \{0,1\}$; we write $x\in A$ for $ F_{A}(x)=1$, for any $x\in \R$.\label{tinkzzzx}
\item The set $B_{d}^{M}(x, r)$ denotes the open ball $\{y\in M: d(x, y)<_{\R}r\}$.  We write $|B_{d}^{M}(x, r)|=2r$ to denote its diameter.   
\item A set $A\subset M$ is \emph{finite} if there is $N\in \N$ such that for any finite sequence $x_{0}, \dots, x_{N}$ of distinct\footnote{An important technicality is that `distinct' is interpreted relative to `$=_{M}$'.  Indeed, since there are uncountably many Cauchy sequences that converge (fast) to $0$, the set $\{0\}$ is only finite if `distinct' is defined via `$=_{\R}$'.  Moreover, if $\SIND$ is available, one can simplify the definition of `$A$ is finite' to the statement that for some $N\in \N$, $A$ does not contain $N$ distinct elements.  The above definition (on the reals) is needed in the RM of e.g.\ Fourier analysis (\cite{samBOOK, samBIG}). \label{zerefl}} elements of $M$, there is $i\leq N$ such that $x_{i}\not\in A$.  The number $N$ is a `size bound of $A$', also denoted `$|A|\leq N$'.  
\item A set $A\subset M$ is \emph{cuf} (countable union of finite sets, \cite{heerlijkheid}) if there is a sequence of finite sets $(A_{n})_{n\in \N}$ with $A=\cup_{n\in \N}A_{n}$.  
\end{enumerate}
\edefi
Other notions (open set, continuity, \dots) now generalise in the usual way.  Basic properties of cuf sets, like enumerability or measure zero, imply Feferman's projection principle and $\FIVE$ (\cite{samHARD, samBOOK, samCREP}).  
By Theorem \ref{tomore} below, $\BNC^{\omega}$ is equivalent to a cuf set $\cup_{n\in \N}A_{n}$ having a size bound, i.e.\ there is $g\in \N^{\N}$ with $|A_{n}|\leq g(n)$.  
The latter is a rather weak property, yet yields $\ATR_{0}$.    

\smallskip

Thirdly, the following definitions are now standard, where we note that a different nomenclature is sometimes used in second-order RM.  
\bdefi\label{deacop} 
For a metric space $(M, d)$, we say that
\begin{itemize}
\item $(M, d)$ is \emph{countably-compact} if for any sequence $(O_{n})_{n\in \N}$ of open sets in $M$ such that $M\subset \cup_{n\in \N}O_{n}$, there is $m\in \N$ such that  $M\subset \cup_{n\leq m}O_{n}$,
\item $(M, d)$ is \emph{compact} in case for any $\Psi:M\di \R^{+}$, there are $x_{0}, \dots, x_{k}\in M$ such that $\cup_{i\leq k}B_{d}^{M}(x_{i}, \Psi(x_{i}))$ covers $M$, 
\item $(M, d)$ is \emph{sequentially compact} if any sequence has a convergent sub-sequence,
\item $(M, d)$ is \emph{bounded} if there is $N\in \N$ such that $d(x, y)<N$ for all $x, y\in M$.
\end{itemize}
\edefi
There are a number of compactness notions for metric spaces, many of which yield a generalisation of Theorem \ref{tomore}.  

\subsubsection{Equivalences involving metric spaces}\label{maint}
We establish some equivalences for $\BNC^{\omega}$ and basic properties of metric spaces $(M, d)$, especially $\sigma$-compactness.  

\smallskip

First of all, $\QFAC^{0,1}$ boasts many equivalences involving compact metric spaces (\cite{samBOOK, samCREP}), including Dini's theorem and the boundedness theorem for continuous functions.  
By the following theorem, sequential versions of such principles are equivalent to $\BNC^{\omega}$, akin to the case for the reals.    
Item~\eqref{tamy1a} should be contrasted with \cite{simpson2}*{IV.1.7} whereby a version of the former for codes is provable in $\WKL_{0}$.  
\begin{thm}[$\ACAo+\QFAC^{0,1}+\SIND$]\label{tomore}
The following are equivalent.
\begin{enumerate}
\renewcommand{\theenumi}{\alph{enumi}}
\item Numerical choice as in $\BNC^{\omega}$.
\item Let $(M_{n}, d)$ be sequentially compact for all $n\in \N$.  There is $g\in \N^{\N}$ such that $(M_{n}, d)$ is bounded by $g(n)$ for each $n\in \N$.  \label{tamy1}
\item The previous item with `sequentially' removed.\label{tamy2}
\item \(Heine-Borel\) Let $(M_{n}, d)$ be sequentially compact for all $n\in \N$ and let $(O_{m})_{m\in \N}$ be an open cover of $\cup_{n\in \N}M_{n}$.  There is $g\in \N^{\N}$ such that $(M_{n}, d)$ is covered by $\cup_{m\leq g(n)}O_{n}$ for each $n\in \N$.  \label{tamy1a}
\item Let $(M, d)$ be sequentially compact and let $(f_{n})_{n\in \N}$ be a sequence of continuous $M\di \R$-functions.  Then there is $g\in \N^{\N}$ such that $(\forall x\in M, n\in \N)(f_{n}(x)\leq g(n))$.\label{tamy3}
\item The previous item restricted to Lipschitz continuous functions.\label{tamy4}
\item Item \eqref{tamy3} generalised to sub-continuous functions.\label{tamy5}
\item \(Dini\) Let $(M_{n}, d)$ be sequentially compact for all $n\in \N$.  Let $(f_{m})_{m\in \N}$ be a monotone sequence of continuous functions on $M=\cup_{n}M_{n}$ converging to continuous $f:M\di \R$.  
Then the convergence is semi-uniform, i.e.\ there is $g:\N^{2}\di \N$ such that\label{zefram}
\be\textstyle\label{juxta} 
(\forall k, n\in \N)(\forall m\geq g(n,k))(\forall x\in M_{n})(|f_{m}(x)-f(x)|<\frac{1}{2^{k}}). 
\ee
\item For sequentially compact $(M, d)$ and cuf $A=\cup_{n}A_{n}$ with $A_{n}\subset M$ finite, there is a a uniform size bound, i.e.\ $g\in \N^{\N}$ with $|A_{n}|\leq g(n)$ for all $n\in \N$.\label{glimmer2}
\item For sequentially compact $(M, d)$ and cuf $A=\cup_{n}A_{n}$ with $A_{n}\subset M$ finite, there is $g\in \N^{\N}$ with $\| A_{n}\|:=\max_{x, y\in A_{n}}d(x, y)$ at most $g(n)$ for all $n\in \N$.\label{glimmer}
\end{enumerate}
\end{thm}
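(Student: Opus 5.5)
The plan has two halves. First, $\BNC^{\omega}$ yields each of items \eqref{tamy1}--\eqref{glimmer}. Second, each item (or a suitable one among them) yields $\BNC^{\omega}$ back, by coding a $\Pi_{1}^{1}$-style numerical statement into a sequentially compact space, much as in the proof of Theorem \ref{Taranie}.

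For the forward direction, every item has the same shape. For each fixed $n$ there is some number $m$ such that a $\Pi$-formula (with higher-order parameters $d$, $M_{n}$, $f_{n}$, $O_{m}$, $A_{n}$) holds. This is exactly the input of $\BNC^{\omega}$, and the bounding function $g$ it returns is the required one. Existence of the pointwise bound is the step needing work, and here I use $\QFAC^{0,1}$ as in Theorem \ref{weaklame}.

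For item \eqref{tamy1}, a sequentially compact $(M_{n},d)$ is bounded: otherwise $\QFAC^{0,1}$ gives $x_{k}$ with $d(x_{0},x_{k})>k$, and a convergent subsequence yields a contradiction. Boundedness by $N$ is the formula $(\forall x,y\in M_{n})(d(x,y)<N)$. Item \eqref{tamy2} is handled the same way, using compactness with $\Psi\equiv 1$ (the triangle inequality bounds the diameter). For items \eqref{tamy3}--\eqref{tamy5}, boundedness of a continuous or sub-continuous $f_{n}$ on sequentially compact $M$ follows by the same argument with $\QFAC^{0,1}$.

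For Heine--Borel, item \eqref{tamy1a}, the plan is as follows. First get a finite subcover of each $M_{n}$ from sequential compactness: if no $\cup_{m\le k}O_{m}$ covers $M_{n}$, choose $x_{k}$ outside it and take a limit point, which lies in some $O_{m}$. The covering statement with $m$ fixed is again a $\Pi$-formula.

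For Dini, item \eqref{zefram}, I first show the pointwise version. For each $n,k$ there is $m$ with $(\forall x\in M_{n})(|f_{m}(x)-f(x)|<2^{-k})$, proved by the classical sequential argument with $\QFAC^{0,1}$. Monotonicity makes this persist for all larger indices, so the $\forall m'\ge m$ clause holds automatically.

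For items \eqref{glimmer2} and \eqref{glimmer}, each $A_{n}$ already has a size bound by the definition of being finite, and the bounded diameter comes from item \eqref{tamy1}. In each case I then apply $\BNC^{\omega}$ and, when an exact threshold is needed (as in Dini), use $\SIND$ and monotonicity to pass from ``some $m\le g(n)$'' to ``all $m\ge g(n)$''.

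For the reverse direction, the key is to reduce each item to the construction of Theorem \ref{Taranie}. Given arithmetical $\psi$ with $(\forall n)(\exists m)(\forall f\in 2^{\N})\psi(n,m,f)$, normalised as in \eqref{combine_f}, I define the metric space $M_{n}:=\{n\}\times\mathcal{C}$ (Cantor space). On it I take the discontinuous-looking metric
\[
d\big((n,f),(n,f')\big):=|F(f)-F(f')|+\bar d(f,f'),
\qquad F(f):=(\mu m)\psi(n,m,f),
\]
where $\bar d$ is the usual Cantor metric, or more simply I embed $M_{n}$ into $\R$ via $f\mapsto (F(f),f)$. The aim is that the values $F$ takes on $M_{n}$ are bounded by the assumption, so $M_{n}$ is sequentially compact. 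The argument for this: a sequence has a subsequence with constant $F$-value and convergent Cantor part, and \eqref{combine_f} together with closedness of $\{f:\psi(n,m,f)\}$ in the appropriate sense ensures the limit has the same $F$-value.

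Any bound on the diameter of $M_{n}$ then bounds $F$ on $M_{n}$, which gives $\BNC^{\omega}$ via items \eqref{tamy1} and \eqref{tamy2}. Items \eqref{tamy3}--\eqref{tamy5} follow by taking $f_{n}(x):=F(x)$ on $M=\cup_{n}M_{n}$ (coded by one space), which is Lipschitz for this metric. For item \eqref{glimmer2}, I take $A_{n}:=\{(n,f_{m}) : m\le F\}$ for canonical points, and item \eqref{glimmer} then follows via diameter. For Heine--Borel, I use the cover $O_{m}:=\{x:F(x)<m\}$. For Dini, I use $f_{m}:=\min(F,m)\cdot 0+\ldots$; more concretely, $f_{m}(x):=\max(0,1-\ldots)$ chosen to converge to $0$ at stage $F(x)$.

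The main obstacle is making the coding space genuinely sequentially compact. Limits of points with bounded but varying $F$-values must stay inside the space with the right value, which is where \eqref{combine_f} and $\SIND$ are used. If this fails, my fallback is to replace the Cantor part by the discrete finite-support variant used in item \eqref{glimmer2} and derive everything from that item.
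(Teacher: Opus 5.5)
Your forward direction is correct and is the paper's argument: get the pointwise bound from $\QFAC^{0,1}$ and a proof by contradiction, then apply $\BNC^{\omega}$. The reverse direction has a genuine gap, at exactly the step you flag as the main obstacle. With $d((n,f),(n,f'))=|F(f)-F(f')|+\bar d(f,f')$, the space $\{n\}\times\mathcal{C}$ is sequentially compact only if $F$ is continuous on Cantor space, and nothing makes it so.

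Already for a parameter-free arithmetical $\psi$ it fails. Take $\psi(n,m,f)\equiv[m\geq 1\vee f \textup{ is eventually zero}]$, which satisfies the hypothesis with $m=1$ and also \eqref{combine_f} (interleave $f_{1},f_{2}$). Then $F(f)=0$ if $f$ is eventually zero and $F(f)=1$ otherwise. The sequence $0^{k}1^{\infty}$ has $F$-value $1$ and $\bar d$-limit $0^{\infty}$, where $F=0$. Any $d$-limit would have to be $0^{\infty}$, but every term is at $d$-distance at least $1$ from it, so no subsequence converges. The condition \eqref{combine_f} does not help here, because it is not about closedness. It merges witnesses, and it is needed only at the very end, as in Theorem \ref{Taranie}: to pass from ``the least $m$ for each $f$ is at most $g(n)$'' to ``some $m\leq g(n)$ works for all $f$''.

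The missing idea is that Definition \ref{donkc} lets you choose $=_{M}$, so you can discard the Cantor coordinate altogether. The paper sets:
\begin{itemize}
\item $M=\N^{\N}$ and $M_{n}=\{f:f(0)\leq n\}$;
\item $Z(00\dots)=0$, and $Z(f)=1+(\mu m)\varphi(f(1)*f(2)*\dots,m,f(0))$ otherwise;
\item $f=_{M}g$ iff $Z(f)=Z(g)$, and $d(f,g)=|Z(f)-Z(g)|$.
\end{itemize}
Modulo $=_{M}$, each $M_{n}$ is then a finite discrete space, since the hypothesis of $\BNC^{\omega}$ bounds $Z$ on $M_{n}$. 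So (sequential) compactness holds no matter how complicated $Z$ is. The base point $00\dots\in M_{n}$ turns a diameter bound into a bound on $Z$. The cover $O_{m}=\{f:Z(f)=m\}$, the Lipschitz function $Z$, and the monotone truncations $Z_{m}$ (equal to $Z$ where $Z\leq m$, else $0$) for Dini all go through directly.

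Two further defects in your plan:
\begin{itemize}
\item For item \eqref{glimmer2}, a size bound counts $=_{M}$-classes, not values, so your $A_{n}:=\{(n,f_{m}):m\leq F\}$ needs to be replaced by a padding trick. The paper uses $W(f)=f(1)+1$ when $f(1)\leq(\mu m)\varphi(f(2)*f(3)*\dots,m,f(0))$, so that the number of classes in $\{f:f(0)\leq n\}$ grows with the required $m$.
\item Items \eqref{tamy3}--\eqref{tamy5} concern a single sequentially compact $M$. Taking the same unbounded $F$ for every $f_{n}$ cannot work, since a sequentially compact space carries no unbounded continuous function. The $n$-th function must see only the $n$-th block.
\end{itemize}
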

\begin{proof}
First of all, assume $\BNC^{\omega}$ and let $(M_{n}, d)$ be compact for all $n\in \N$.  Compactness trivially implies boundedness and apply the former to $(\forall n\in \N)(\exists N\in \N)(\forall f, g\in M_{n})( d(f, g)\leq  N )  $ to obtain item \eqref{tamy2}.  Using $\QFAC^{0,1}$, sequential compactness also implies boundedness and countable compactness (via the obvious proof-by-contradiction), i.e.\ items~\eqref{tamy1} and \eqref{tamy1a} also follows.  
For items \eqref{tamy3} and \eqref{tamy4}, the usual proof-by-contradiction (again using $\QFAC^{0,1}$) shows that a (sub-)continuous function on a sequentially compact metric space is bounded.  
Apply $\BNC^{\omega}$ to $(\forall n\in \N)(\exists N\in \N)(\forall f\in M_{n})(|F(f)|\leq N )$ where $F:M\di \R$ is continuous.
To establish item \eqref{zefram}, the usual proof-by-contradiction) of Dini's theorem works in $\ACAo+\QFAC^{0,1}$; then apply $\BNC^{\omega}$ to obtain \eqref{juxta}.   
Items \eqref{glimmer2} and \eqref{glimmer} have an obvious proof given their syntactic form.  

\smallskip

Secondly, to derive $\BNC^{\omega}$ from the other items, let $\varphi$ be arithmetical and such that $(\forall n\in \N)(\exists m\in \N)(\forall f\in \N^{\N})\varphi(f,m,n)$. 
Define $Z:M\di \R$ as follows
\[
Z(f):= 
\begin{cases}
m+1 &\begin{array}{l} \textup{if $m\in \N$ is the least number such that} \\ \textup{$\varphi(f(1)*f(2)*\dots, m, f(0))$ and $f\ne_{1} 00\dots$}\end{array}\\
0 &  \textup{ if $f=_{1}00\dots$}
\end{cases}.
\]
Now observe that $d(f, g)=|Z(f)-Z(g)|$ is a metric on $M=\N^{\N}$ if we define $f=_{M}g$ by $Z(f)=_{0}Z(g)$.  
Also observe that $M_{n}=\{f\in M:f(0)\leq n\}$ is (sequentially) compact.  
Let $g\in \N^{\N}$ be as provided by item~\eqref{tamy1} or \eqref{tamy2}, implying $(\forall f\in M_{n})(Z(f)=d(f, 00\dots)<g(n))$. 
Hence, we obtain a function as in the consequent of $\BNC^{\omega}$.  

\smallskip

For item \eqref{tamy1a}, define the open set $O_{m}=\{f\in M: Z(f)=m\}$ and observe that $\BNC^{\omega}$ follows from the former. 
To obtain $\BNC^{\omega}$ from item \eqref{tamy3} or \eqref{tamy4}, note that $Z:M\di \R$ is Lipschitz continuous (as an $M\di \R$-function), i.e.\ a bound for this function also yields $\BNC^{\omega}$. 
Now assume item \eqref{zefram} and define $Z_{m}:M\di \R$ as $Z(f)$ if $Z(f)\leq m$, and $0$ otherwise.  
Then $(Z_{m})_{m\in \N}$ is a monotone sequence of continuous functions with limit $Z$.  
Let $g:\N^{2}\di \N$ be as in \eqref{juxta} and note that we readily obtain the function required by $\BNC^{\omega}$. 

\smallskip

To show that item \eqref{glimmer} implies $\BNC^{\omega}$ observe that $M=\cup_{n\in \N}M_{n}$ is cuf as each $M_{n}$ contains at most finitely many elements, where we recall Footnote \ref{zerefl}.
Clearly, any function $g\in \N^{\N}$ with $\|M_{n}\|\leq g(n)$ yields the choice function from the consequent of $\BNC^{\omega}$.
Unfortunately, an upper bound on the size $|M_{n}|$ does not yield the same information.  
To remedy this, consider the following functional similar to $Z$:
\[
W(f):= 
\begin{cases}
i+1 &\begin{array}{l} \textup{if $f\ne_{1} 00\dots$ and $m\in \N$ is the least number such that} \\ \textup{$\varphi(f(2)*f(3)*\dots, m, f(0))$ and $f(1)=i\leq m$}\end{array}\\
0 &  \textup{ if $f=_{1}00\dots$}
\end{cases}.
\]
Then $W$ yields a metric space $(M, d')$ in the same way as for $Z$.  The set $M$ is cuf as in $M=\cup_{n\in \N}N_{n}$ where $N_{n}:=\{f\in M: f(0)\leq n \}$.    
If $g\in \N^{\N}$ is such that $|N_{n}|\leq g(n)$ for all $n\in \N$, we do obtain the consequent of $\BNC^{\omega}$.
\end{proof}
As a side result, we observe that e.g.\ $\BNC^{\omega}+\QFAC^{0,1}$ is equivalent to item \eqref{tamy1} over $\ACAo+\SIND$.
We also discuss a further variation of the above. 
\begin{rem}\rm
Let $(X, \preceq_{X})$ be a linear ordering such that $X\subset \N^{\N}$ is cuf, i.e.\ $X=\cup_{n\in \N}X_{n}$ where each $X_{n}$ is finite; the latter is as in Definition~\ref{charc}: there is some $N\in \N$ such that for any $N+1$ `distinct relative to $=_{X}$' elements, there is at least one not in $X_{n}$.  Then $\BNC^{\omega}$ is equivalent to the existence of $g\in \N^{\N}$ with $|X_{n}|\leq g(n)$ for all $n\in \N$ and any such ordering.    
%
%
%
\end{rem}
In conclusion, a number of basic statements about metric spaces are equivalent to numerical choice as in $\BNC^{\omega}$, perhaps most noteworthy being the principle stating that cuf sets have a uniform size bound; the latter property seems much weaker than an enumeration. 

\subsection{There and back again}
We show that equivalences for higher-order  numerical choice $\BNC^{\omega}$ can be `pushed down' to yield equivalences involving second-order numerical choice $\BNC$.  
Hence, higher-order RM directly contributes to second-order RM, which is a theme explored in \cite{samBOOK}.  On a technical note, we shall use `arithmetical' to refer to formulas only involving arithmetical quantifiers, while `$\L_{2}$-arithmetical' means that the parameters are further restricted to second-order.

\smallskip

First of all, we push down Theorem \ref{Taranie} to obtain equivalences for $\ATR_{0}$.
A function is \emph{effectively Baire 2} if it is the iterated limit of a double sequence of continuous functions.  
\begin{thm}[$\ACAo+\SIND$]\label{coredesigner}
The following are equivalent.
\begin{enumerate}
\renewcommand{\theenumi}{\alph{enumi}}
\item For sub-continuous and effectively Baire 2 $f:\R\di \R$, there is continuous $g:\R\di \R$ with $(\forall x\in \R)(f(x)\leq g(x))$.\label{slofi1337}
\item For Riemann integrable and effectively Baire 2 $f:\R\di \R$, there is continuous $g:\R\di \R$ with $(\forall x\in \R)(f(x)\leq g(x))$.\label{slofi13372}
\item Item \eqref{slofi1337} restricted to simply continuous functions.  \label{zachionz1337}
\item Item \eqref{slofi1337} restricted to measurable functions.  \label{zachionz21337}
\item For a sub-continuous $\bf\Delta^1_1$-function $f:\R\di \R$, there is continuous $g:\R\di \R$ with $(\forall x\in \R)(f(x)\leq g(x))$. \label{item_Delta11}
\item \($\BNC$\)\label{xionz}
For any $\varphi \in \Pi_{1}^{1}$, we have that
\[
(\forall n\in \N)(\exists m\in \N)\varphi(n, m)\di  (\exists g\in \N^{\N})(\forall n\in \N)(\exists m\leq g(n))\varphi(n, m).
\]
\item The system $\ATR_{0}$.
\end{enumerate}
\end{thm}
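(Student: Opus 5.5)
The plan is to close the cycle through $\BNC$, using Theorem \ref{hofff} for $\BNC\asa\ATR_{0}$ over $\RCA_{0}$. This transfers to $\ACAo+\SIND$ by Hunter's conservation results, since both statements are second-order. What remains is to show (a)--(e) $\di$ (f) and (g) $\di$ (a)--(e), and in both directions the work is to push the proof of Theorem \ref{Taranie} down to second-order parameters.

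\emph{Downward direction.} I reuse the counterexample functions from the proof of Theorem \ref{Taranie}. Take $\varphi(n,m)\equiv(\forall f\in 2^{\N})\psi(n,m,f)$ with $\psi$ $\L_{2}$-arithmetical, and assume the closure property \eqref{combine_f} w.l.o.g. I then define $\tilde f$ as in \eqref{counter}, restricted to the Cantor set $\mathcal{C}$ modulo $1$. I already know that $\tilde f$ is sub-continuous, simply continuous, measurable, Riemann integrable everywhere and locally bounded; the local boundedness uses $\SIND$ in place of $\QFAC^{0,1}$, as in Remark \ref{bipoli}. The new point is that $\tilde f$ is effectively Baire 2, and this is the step I expect to be the main obstacle. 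My first attempt is to write $\tilde f(x)=\lim_{k}\lim_{l}h_{k,l}(x)$. Here $\tilde f(x)\leq k$ iff $(\exists m\leq k)\psi(n,m,\xi(x))$, which is arithmetical in $x$ with second-order parameters. I would first replace $\psi$ by a $\Pi^{0}_{1}$ or $\Pi^0_2$ matrix, using the usual trick of adding a $2^{\N}$-witness, which \eqref{combine_f} allows. Then `$m$ is least' is a Boolean combination of $\Pi^0_1$ conditions on $\xi(x)$. On $\mathcal{C}$ the map $x\mapsto\xi(x)$ is continuous, because Cantor-set points have a unique ternary expansion. The iterated limit of continuous functions should therefore be obtained by approximating closed conditions with piecewise linear functions, where the inner limit handles the $\Pi^0_1$ part and the outer limit handles minimisation. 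Off $\mathcal{C}$ the value is $0$, and I use continuous cutoffs vanishing on the complementary intervals at stage $l$. For item (e), a Baire 2 function with second-order codes is $\bf\Delta^1_1$, and in any case $\tilde f$ is directly $\Pi^1_1$-and-$\Sigma^1_1$ definable. Given a dominating continuous $g$, the proof of Theorem \ref{Taranie} yields $h(n)=\lceil\sup_{[-n,n]}g\rceil$ via $(\exists^{2})$, and this $h$ witnesses $\BNC$.

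\emph{Upward direction.} Assume $\ATR_{0}$, hence $\BNC$ by Theorem \ref{hofff}. Let $f$ be effectively Baire 2, or $\bf\Delta^1_1$, and either sub-continuous or Riemann integrable. By Theorem \ref{weaklame} we have $(\forall n)(\exists m)(\forall x\in[-n,n])(f(x)\leq m)$; if $\QFAC^{0,1}$ is unavailable, I would instead argue via the sequential compactness contradiction with $\SIND$. Since $f$ is given by second-order codes, namely a double sequence of codes for continuous functions or a $\bf\Delta^1_1$ definition, the inner formula $(\forall x\in[-n,n])(f(x)\leq m)$ is $\Pi^1_1$ with only second-order parameters. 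Hence $\BNC$ applies and gives $g$, and $(\exists^{2})$ turns $g$ into a continuous dominating function. The simply continuous and measurable restrictions (c) and (d) are immediate, since they are special cases of (a). The only check needed here is that quantifying over reals in $[-n,n]$ stays within $\Pi^1_1$ once the Baire 2 representation is spelled out, which is routine in $\ACA_{0}$.
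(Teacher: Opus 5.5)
There is a genuine gap in your derivation of $\BNC$ from items \eqref{slofi1337}--\eqref{zachionz21337}, at exactly the step you flag as the main obstacle. You normalise $\varphi(n,m)\equiv(\forall f\in 2^{\N})\psi(n,m,f)$ so that $\psi$ has a $\Pi^0_1$ or $\Pi^0_2$ matrix in $f$. Over a Cantor-space quantifier such formulas are arithmetical. For decidable $\theta$, $\WKL$ (available in $\ACA_0$) shows that $(\forall f\in 2^{\N})(\exists k)\theta(\overline{f}k,j)$ is equivalent to $(\exists N)(\forall\sigma\in 2^{N})(\exists k\leq N)\theta(\overline{\sigma}k,j)$. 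So $(\forall f\in 2^{\N})\Pi^0_2$ collapses to $\Pi^0_2$, and similarly for $\Pi^0_1$. Your argument therefore only yields $\BNC$ for arithmetical $\varphi$, which $\ACA_0$ proves outright via $n\mapsto(\mu m)\varphi(n,m)$, and it gives nothing towards $\ATR_0$.

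No normal form over $2^{\N}$ with a $\Pi^0_2$ matrix can capture $\Pi^1_1$; the right target is a $\Sigma^0_2$ matrix. For example, coding $g\in\N^{\N}$ by binary sequences with infinitely many ones turns Kleene's normal form into $(\forall f\in 2^{\N})[(\exists j)(\forall i\geq j)(f(i)=0)\vee(\exists k)\theta'(\overline{f}k)]$. Your construction can be repaired for this form. The least $m$ satisfying a $\Sigma^0_2$ condition is still an iterated limit of locally constant approximations: the inner limit handles the universal quantifier, and the outer limit handles the existential quantifier together with the minimisation. This has to be carried out, including checking that \eqref{combine_f} can be maintained at this complexity. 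Your treatment of item \eqref{item_Delta11} is fine provided you do not normalise: for arbitrary arithmetical $\psi$, $\tilde f$ is arithmetically defined and hence $\mathbf{\Delta}^1_1$.

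The paper sidesteps all of this. Each of \eqref{slofi1337}--\eqref{item_Delta11} directly yields item \eqref{kuhl} of Theorem \ref{hofff}. A $\limsup$ of continuous functions that is non-zero at at most one point per unit interval automatically has every property the items require:
\begin{itemize}
\item sub-continuous;
\item Riemann integrable, using $\SIND$ to pass from unit to arbitrary intervals;
\item simply continuous and measurable;
\item $\mathbf{\Delta}^1_1$;
\item effectively Baire 2, via $\limsup_k f_k=\lim_n\lim_l\max_{n\leq k\leq l}f_k$.
\end{itemize}
The real work is done inside Theorem \ref{hofff}: uniqueness of the jump hierarchy makes the exceptional set a singleton with a $\Pi^0_2(Z)$ test. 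The paper's secondary sketch via \eqref{gezelles} also uses a $\Pi^0_2$ matrix over $2^{\N}$ and needs the same repair as yours.

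Two smaller points. First, $\BNC\asa\ATR_0$ carries over to $\ACAo+\SIND$ simply because the latter contains $\RCA_0$, not by conservation, which runs the other way. Second, in the upward direction Theorem \ref{weaklame} relies on $\QFAC^{0,1}$, which is not in the base theory and is not replaced by $\SIND$. Use $\SAC$ instead, as the paper does: $\ATR_0$ provides it, and it applies because $f$ is given by second-order codes. For Riemann integrable $f$, the choice-free argument of Footnote \ref{uftring} also works.
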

\begin{proof}
The final two items are equivalent over $\RCA_{0}$ by Theorem \ref{hofff}.  To obtain items \eqref{slofi1337}-\eqref{item_Delta11} (even without $\SIND$), using $\SAC$  (implied by $\ATR_{0}$), $f$ is bounded on every interval, and we can apply $\BNC$.

\smallskip

That each of \eqref{slofi1337}-\eqref{item_Delta11} implies $\ATR_0$ is an easy consequence of item \eqref{kuhl} in Theorem~\ref{hofff}.  Here, $\SIND$ is only used for $\eqref{slofi13372}$ to get boundedness on every finite (as opposed to unit) interval.  Moreover, $\SIND$ suffices since we only need at most one non-zero value per unit interval, so we can enumerate these values for an interval.

\smallskip

We also provide a different proof that does not rely on Theorem $\ref{hofff} \eqref{kuhl}$ and thus might be more generalisable to third-order analogues.
To show that item \eqref{slofi1337} implies item \eqref{xionz}, we modify the proof of Theorem~\ref{Taranie}.  
In particular, we show that $f$ from \eqref{counter} and $\tilde{f}$ are effectively Baire 2 in case $\psi$ is arithmetical and only has second-order parameters.  
To this end, we assume
\be\label{gezelles}
(\forall a \in \N)(\exists b\in \N)(\forall x \in 2^\N) \big[(\forall m\in \N) (\exists n\in \N) R(a,b,x,m,n)\big],
\ee
where $R$ is primitive recursive, i.e.\ \eqref{gezelles} is part of $\L_{2}$.  We now construct continuous functions $H_{n,m}:2^\N \rightarrow \R$ such that the double limit $H = \lim_{m \rinf}\lim_{n \rinf}H_{n,m}$ is well-defined.  
Identifying $2^\N$ with the Cantor set, we extend each $H_{n,m}$ to a continuous function $h_{n,m}:[0,1] \rightarrow \R$ by extending the graph with straight lines. 
Note that all limits commute with this extension and that the corresponding extension $h$ of $H$ is effectively Baire 2.
We first introduce the following function
\be\label{fatherly}
G_{a,n,m}(x) := 
\begin{cases}
b+1 & \begin{array}{l} \textup{if $b\leq m$ is the least number with} \\ \textup{$(\forall i \leq m)( \exists j \leq n) R(a,b,x,i,j)$}\end{array}\\
0 & \textup{otherwise}. 
\end{cases}.
\ee
We now define $H_{n,m}:2^{\N}\di \R$ by cases as follows.
\be\label{zonkojoke}
\begin{array}{l}
\text{If the real $x$ is of the form $\underbrace{1*\dots *1}_{\textup{$m+1$ times}}*~y$, we define $H_{n,m}(x): = 0$.}\\
\text{If for some $a \leq m$, the real $x$ is of the form $\underbrace{1*\dots *1}_{\textup{$a$ times}}*~0* y$,}\\
\text{we define $H_{n,m}(x) :=  G_{a,n,m}(y)$}.
\end{array}
\ee
By Kleene's normal form theorem (provable in $\ACA_{0}$; \cite{simpson2}), the variable $x$ in \eqref{fatherly} may be assumed to occur as $\overline{x}n$, which guarantees that $H_{n,m}$ is continuous.  
Letting $H$ be the double limit of the former, we have $H(x) = b+1$ if $b\in \N$ is the least number with $(\forall m\in \N) (\exists n\in \N) R(a,b,y,m,n)$, and 0 otherwise.
Let $h$ be the extension of $H$ as above and observe that it is as required.  
\end{proof}
Secondly, the previous equivalences are mere examples, i.e.\ there should be many variations and equivalences. 
To appreciate the below generalisations, we consider the following version of topological continuity.
\bdefi\label{fefetieve}
A function $f:\R\di \R$ is \emph{effectively continuous} if {for any sequence $(V_{n})_{n\in \N}\subset \R$ of RM-open sets, $(f^{-1}(V_{n}))_{n\in \N}$ is a sequence of RM-open sets.}
\edefi
The adjective `effectively' in the previous definition is apt by the following theorem.  Indeed, a continuous modulus suffices to obtain an RM-code (\cite{dagsamXIV}*{\S2}).   
\begin{thm}[$\RCAo$]\label{dirfgje}
An effectively continuous function $f:\R\di \R$ has a continuous modulus of continuity.  
\end{thm}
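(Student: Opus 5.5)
The plan is to feed suitably chosen RM-open sets into Definition \ref{fefetieve} and read a modulus off the resulting codes; no case distinction on $(\exists^{2})$ will be needed. Fix $x\in\R$ and $k\in\N$. We want $\delta(x,k)$, given by a continuous (on Baire space, i.e.\ computed from finitely much of the representation of $x$) functional, such that $|x-y|<\delta(x,k)$ implies $|f(x)-f(y)|<\frac{1}{2^{k}}$.

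\textbf{Step 1: the open sets.} Let $(q_{i})_{i\in\N}$ enumerate $\Q$ and consider the sequence of rational intervals $V_{\langle i,k\rangle}:=(q_{i}-\frac{1}{2^{k+1}}, q_{i}+\frac{1}{2^{k+1}})$. These are trivially RM-open. By effective continuity, $(f^{-1}(V_{\langle i,k\rangle}))_{i,k}$ is a sequence of RM-open sets, i.e.\ we obtain, as a single second-order object, sequences of reals $(a_{\langle i,k\rangle,l})_{l}$ and $(b_{\langle i,k\rangle,l})_{l}$ with $f^{-1}(V_{\langle i,k\rangle})=\cup_{l}(a_{\langle i,k\rangle,l},b_{\langle i,k\rangle,l})$.

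\textbf{Step 2: the modulus.} Given $x$ (as a fast-converging Cauchy sequence) and $k$, search for a triple $(i,l,N)$ such that the approximation $x(N+3)$ certifies $|q_{i}-f(x)|<\frac{1}{2^{k+2}}$-type information is \emph{not} needed; instead search for $(i,l,N)$ with $a_{\langle i,k\rangle,l}+\frac{1}{2^{N}}<_{\R}x<_{\R}b_{\langle i,k\rangle,l}-\frac{1}{2^{N}}$, verified as a $\Sigma^{0}_{1}$ statement using finitely many terms of $x$ and of the codes. Such a triple exists: taking $q_{i}$ with $|q_{i}-f(x)|<\frac{1}{2^{k+1}}$ gives $x\in f^{-1}(V_{\langle i,k\rangle})$, hence $x$ lies in some code interval strictly. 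Define $\delta(x,k):=\frac{1}{2^{N}}$ for the first such triple found in a fixed search order. Then for $|y-x|<\delta(x,k)$, $y$ lies in the same interval, so $f(x),f(y)\in V_{\langle i,k\rangle}$ and $|f(x)-f(y)|<\frac{1}{2^{k}}$.

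\textbf{Step 3: formalisation in $\RCAo$.} The search is an unbounded $\mu$-search over a decidable-from-finite-data relation, whose termination was shown above; $\RCAo$ has Feferman's $\mu$ only in the form of primitive recursion, so the main obstacle is defining $\delta$ as a term. I would handle this by noting the search predicate depends only on $\overline{x}m$ and the fixed codes, so $\delta$ is given by an associate-style definition: define $\delta$ via the Kleene-type construction of a functional from a neighbourhood function (the relation ``$\overline{x}m$ certifies triple $(i,l,N)$'' is quantifier-free in $\overline{x}m$), which is available in $\RCAo$ and automatically yields a continuous functional. Continuity of $\delta$ then follows since its value depends only on the finite initial segment used to certify the triple; extensionality in $x$ is not required for a modulus. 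A potential subtlety is that different representations of $x$ give different $\delta$, which is harmless.
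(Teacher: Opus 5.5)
Your proof is correct, but it takes a different route from the paper's.

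\textbf{The paper's route.} The paper takes $V_{n,k}=B(f(q_{n}),\frac{1}{2^{k}})$ and appeals to continuity of $f$ to place every $x$ in some $O_{n,k}$. It then applies $\QFAC^{1,0}$ to $(\forall x,k)(\exists N,m,n)(a_{n,k,m}<x-\frac{1}{2^{N}}<x+\frac{1}{2^{N}}<b_{n,k,m})$, which gives an \emph{arbitrary} witnessing functional $\Phi$. Finally it splits into cases. If $\Phi$ is continuous, it is the modulus. If not, $\Phi$ yields $(\exists^{2})$, and a modulus is then obtained by restricting the continuity condition to rationals and searching.

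\textbf{Where you differ.} There are two differences.
\begin{itemize}
\item You centre the target intervals at rationals in the codomain. A witness then exists by density of $\Q$ alone, without first extracting continuity of $f$ from effective continuity.
\item More substantially, you select the \emph{least} witness. That functional is locally constant on Baire space, so the case split on $(\exists^{2})\vee\neg(\exists^{2})$ disappears and you get an explicit associate for the modulus.
\end{itemize}
The paper's argument is shorter given the standard dichotomy. However, in its $(\exists^{2})$-case one must still make sure the search yields a modulus continuous in the representation of $x$, and your least-witness device handles this uniformly.

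\textbf{Making Step 3 precise.} $\RCAo$ has no unbounded search as a term and no form of Feferman's $\mu$. What makes your ``Kleene-type construction'' available is $\QFAC^{1,0}$.
\begin{itemize}
\item Let $P(x,c)$ say that $c=\langle i,l,N,j\rangle$ and that the $j$-th approximations of $x$ and of the code reals certify $a_{\langle i,k\rangle,l}+\frac{1}{2^{N}}<x<b_{\langle i,k\rangle,l}-\frac{1}{2^{N}}$. This is quantifier-free, and your Step 2 shows $(\forall x)(\exists c)P(x,c)$.
\item Apply $\QFAC^{1,0}$ to get a bound $\Psi$. Then read $\delta$ off the least $c\leq\Psi(x)$ with $P(x,c)$.
\item This least $c$ does not depend on $\Psi$ and is fixed by a finite initial segment of $x$. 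That is exactly why $\delta$ is continuous.
\end{itemize}

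\textbf{The search must run over quadruples.} It has to range over these quadruples (triple plus certificate index), as your Step 3 has it. Taking the least \emph{triple} $(i,l,N)$ for which the $\Sigma^{0}_{1}$ inequality holds, as Step 2 literally reads, makes minimality a $\Pi^{0}_{1}$ condition on $x$. The resulting map is then in general not continuous.
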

\begin{proof}
Let $f:\R\di\R$ be effectively continuous and define a sequence of RM-open sets as $V_{n, k}=B(f(q_{n}), \frac{1}{2^{k}})$ where $(q_{n})_{n\in \N}$ is a fixed enumeration of the rationals. 
Let $(O_{n,k})_{n, k \in \N}$ be a sequence of RM-open sets such that $O_{n, k}=f^{-1}(V_{n, k})$ for all $n, k\in \N$.  
In particular, $O_{n, k}=\cup_{m\in \N}(a_{n,k,m}, b_{n,k,m})$ for triple sequences of reals as stated.  
By continuity, for any $x\in \R$ and $k\in \N$, there is $n\in \N$ with $|f(x)-f(q_{n})|<\frac{1}{2^{k}}$.  
The latter formula is `$x\in f^{-1}(V_{n,k})$', which is equivalent to $(\exists m\in \N)(x\in (a_{n,k, m} , b_{n,k,m} ) )$, yielding 
\be\label{koupap}\textstyle 
(\forall x\in \R, k\in \N)(\exists N, m, n\in \N)(a_{n,k, m} <x-\frac{1}{2^{N}}< x+\frac{1}{2^{N}}< b_{n,k,m} ) ).
\ee
Let $\Phi$ be the functional resulting from applying $\QFAC^{1, 0}$ to \eqref{koupap}.  
In case $\Phi$ is continuous, it yields the required modulus.  In case $\Phi$ is discontinuous, we obtain $(\exists^{2})$ by \cite{kohlenbach2}*{Prop.\ 3.14}.
The latter readily yields a modulus of continuity (\cite{kohlenbach4}*{\S4}) by restricting the innermost quantifier in the definition of continuity to the rationals. 
An unbounded search then yields the required modulus.  
\end{proof}
Thirdly, inspired by Definition \ref{fefetieve}, we define additional `effective' definitions. 
\begin{defi}[Effective definitions]\label{flung2} For a function $f:\R\di \R$, we have the following definitions:
\begin{itemize}
\item $f$ is \textup{EB2} if for all sequences $(G_{n})_{n\in \N}$ of RM-opens, there is a sequence $(O_{n,m})_{n\in \N}$ of RM-open sets with $f^{-1}(G_{n})=\cup_{n\in \N}\cap_{m\in \N}O_{n,m}$ for $n\in \N$,
\item $f$ is \emph{arithmetically effectively measurable} if for any sequence $(G_{n})_{n\in \N}$ of RM-open sets, there is a sequence $(C_{n,m}, N_{n})_{n,m\in \N}$ such that for all $n\in \N$, 
we have that $f^{-1}(G_{n}) $ equals $ \cup_{m\in \N}C_{n,m} \bigcup N_{n}$, that $C_{n,m}$ is RM-closed, and that $N_{n}$ is $\L_{2}$-arithmetical and measure zero,
\item $f$ is \emph{arithmetically effectively sico} if for any sequence $(G_{n})_{n\in \N}$ of RM-open sets, there is a sequence $(O_{n}, N_{n})_{n\in \N}$ such that for all $n\in \N$, $f^{-1}(G_{n})=O_{n}\cup N_{n}$, 
$O_{n}$ is RM-open, and $N_{n}$ is $\L_{2}$-arithmetical and nowhere dense,
\item $f$ is \emph{arithmetically effectively Baire measurable} if for any sequence $(G_{n})_{n\in \N}$ of RM-open sets, there is a sequence $(O_{n}, N_{n,m})_{n,m\in \N}$ such that for all $n\in \N$, we have that $f^{-1}(G_{n})\Delta O_{n}= N_{n} $, each $O_{n}$ is RM-open, and $N_{n}=\cup_{m\in \N}N_{n,m}$ where each $N_{n,m}$ is nowhere dense and $\L_{2}$-arithmetical.  
\end{itemize}
\end{defi}
While `Baire 2', `effectively Baire 2' and `EB2' are all equivalent over $\ZFC$, this cannot be proved in $\Z_{2}^{\omega}+\QFAC^{0,1}$ (\cite{dagsamXIV}).

\smallskip

We now have the following corollary to Theorem \ref{coredesigner}.
\begin{cor}\label{zinga}
Theorem \ref{coredesigner} goes through for `effectively Baire 2' replaced by any of the items from Definition \ref{flung2}.
\end{cor}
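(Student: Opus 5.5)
We follow the two-directional proof of Theorem \ref{coredesigner}, checking that each step survives when `effectively Baire 2' is replaced by `EB2', `arithmetically effectively measurable', `arithmetically effectively sico', or `arithmetically effectively Baire measurable'.

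For the forward direction (from $\ATR_{0}$, i.e.\ $\BNC$, to the modified items), we use that each effective notion makes $f(x)>q$ expressible by a formula of bounded complexity. Apply the definition to the single RM-open set $G=(q,+\infty)$, or to a sequence of such sets indexed by rationals $q$. This gives $f^{-1}(G)$ as one of the following: an $\bf\Sigma^0_2$-type union of RM-codes; an RM-open set up to an $\L_{2}$-arithmetical nowhere dense or measure zero set; or an $F_\sigma$ up to an $\L_2$-arithmetical null set. In each case, membership `$x\in f^{-1}((q,\infty))$' is arithmetical in second-order parameters, uniformly in $q$. Hence the statement $(\forall n)(\exists m)(\forall x\in[-n,n])(f(x)\leq m)$ is $\Pi^1_1$ in second-order parameters. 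Its antecedent holds: sub-continuity or Riemann integrability gives boundedness on each $[-n,n]$, via $\SAC$ as in the proof of Theorem \ref{coredesigner}, or via Theorem \ref{weaklame} combined with $\SIND$ to pass from unit intervals to $[-n,n]$. Applying $\BNC$ and then $(\exists^2)$ turns the resulting bounds into a continuous dominating $g$, exactly as before.

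For the reverse direction we reuse the functions built in the second proof of Theorem \ref{coredesigner}. There $h$ is the piecewise-linear extension of $H=\lim_m\lim_n H_{n,m}$, supported on the Cantor set and taking only finitely many natural values on each unit interval. We must check that $h$ has each effective property. For EB2: $h^{-1}(G)$ for RM-open $G$ splits as the open part $\mathcal{C}^{c}\cap\{h\in G\}$, which is RM-open since $h$ is continuous off $\mathcal{C}$ and has an explicit formula there, together with a union over the finitely many relevant values $b+1\in G$ of the sets $\{y: H(y)=b+1\}$. The latter sets are Boolean combinations of $\Pi^0_2$ conditions on $y$, so each is a countable union of countable intersections of RM-open sets, uniformly in $G$. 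For the arithmetical variants, take $N=\mathcal{C}\cap h^{-1}(G)$. This set is $\L_{2}$-arithmetical because $R$ is primitive recursive with second-order parameters. It is nowhere dense and measure zero because $\mathcal{C}$ is. The remaining part is RM-open, or RM-closed in the measurable case, by writing $\mathcal{C}^{c}\cap h^{-1}(G)$ as a countable union of closed intervals. Once these properties are established, the argument of Theorem \ref{coredesigner} yields $\BNC$, and hence $\ATR_0$ by Theorem \ref{hofff}. Alternatively, one can invoke item \eqref{kuhl} of Theorem \ref{hofff}, whose functions have at most one non-zero point per unit interval and are therefore even easier to check.

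The main obstacle will be the uniformity in the EB2 case. We must produce the double sequence $(O_{n,m})$ of RM-codes for a whole sequence $(G_n)$ at once, inside $\ACAo$. This requires deciding, for each $G_n$, which values $b+1$ lie in $G_n$. That membership is only $\Sigma^0_1$, so we would handle it by enumerating the rationals witnessing $b+1\in G_n$ and folding that existential quantifier into the outer union. The same care with quantifier placement is needed to keep the exceptional sets $\L_2$-arithmetical and not merely definable.
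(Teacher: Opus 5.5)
Your proposal is correct and is essentially the paper's argument. The paper's proof only says that the proof of Theorem \ref{coredesigner} goes through because the function $h$ built there satisfies every notion in Definition \ref{flung2}, and you supply the checks it leaves implicit: arithmetical definability of $f^{-1}((q,\infty))$ so that $\BNC$ applies, and the split of $h^{-1}(G)$ into a part off $\mathcal{C}$ and a part on $\mathcal{C}$.

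A few small corrections, none of which affect the argument:
\begin{itemize}
\item The paper's $h$ is the straight-line extension of $H$, so it need not vanish off $\mathcal{C}$. Your split only uses that $h$ is continuous off $\mathcal{C}$, so this does not matter.
\item In $\ACAo$ the functional $\exists^{2}$ already decides $b+1\in G_{n}$, so the uniformity worry you flag is harmless.
\item For boundedness, rely on $\SAC$ rather than Theorem \ref{weaklame}, which needs $\QFAC^{0,1}$.
\end{itemize}
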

\begin{proof}
Observe that the proof of Theorem \ref{coredesigner} goes through.  In particular, the function $h$ satisfies all effective notions from Defintion \ref{flung2}.  
\end{proof}
In conclusion, the various effective definitions from Definition \ref{flung2} push down higher-order equivalences for $\BNC^{\omega}$ to equivalences for $\ATR_{0}$. 

\section{Connections}\label{duko}
\subsection{Introduction}
In this section we connect numerical choice to other prominent systems of higher-order RM, including Cousin's lemma (\cites{cousin1, dagsamIII}) and Kohlenbach's generalisation of weak K\"onig's lemma, called $\Phi_{1}$-$\WKL$ in \cite{kohlenbach4} .  
Our results include an elegant `splitting' as follows, assuming $\SIND$:
\be\label{coolsplit}
\ENC^{\omega}\asa [\BNC^{\omega}+\Phi_{1}\textup{-}\WKL]\asa [\BNC^{\omega}+\HBT]\asa  [\BNC^{\omega}+\Phi_{1}\textup{-}\KL], 
\ee
where the latter is $\Phi_{1}$-$\WKL$ generalised to finitely branching trees and where $\HBT$ is a version of Cousin's lemma (\cite{cousin1}).  We find these results particularly pleasing as \cite{kohlenbach4} constitutes one of the first papers related to higher-order RM.  

\smallskip

We also connect numerical choice to the coding principle $\open$ introduced in \cite{dagsamVII}, which expresses that open sets have RM-codes.      
The latter is among the strongest natural third-order principles in higher-order RM, and follows from $\ENC^{\omega}$ if we assume the basic fact that open sets are $\bf F_{\sigma}$ (Theorem \ref{finalt}). 

\subsection{Cousin's lemma}\label{notenoughleeb}
We connect $\ENC^{\omega}$ to Cousin's lemma and Heine-Borel compactness, via the Lebesgue number lemma.

\smallskip

First of all, Cousin's lemma is formulated as follows (\cite{dagsamIII, cousin1}), i.e.\ a version of the Heine-Borel theorem for uncountable coverings, whence its acronym.  
\begin{princ}[$\HBU$]
For any $\Psi:[0,1]\di \R^{+}$, there are $x_{0}, \dots, x_{m}\in [0,1]$ with $[0,1]\subseteq \cup_{i\leq m}B(x_{i}, \Psi(x_{i}))$.
\end{princ}
Over a reasonable base theory, $\HBU$ is equivalent to $\HBT$ (\cite{samBOOK, sahotop, dagsamXVII}) as follows.
\begin{princ}[$\HBT$]
For any $\psi:[0,1]\di \R^{+}\cup\{0\}$ such that for $(\forall x\in [0,1])(\exists y\in [0,1])(x\in B(y,\psi(y)))$, there are $x_{0}, \dots, x_{m}\in [0,1]$ with $[0,1]\subseteq \cup_{i\leq m}B(x_{i}, \psi(x_{i}))$.
\end{princ}
The coverings in $\HBT$ are clearly more general; in fact, the ones in $\HBU$ are unsuitable for general topology, like formulating the notion of dimension (see \cite{sahotop}). 

\smallskip

Secondly, we have the following result.  
\begin{thm}[$\ACAo+\SIND$] \label{ENC_to_HBT}
Numerical choice as in $\ENC^{\omega}$ implies Cousin's lemma as in $\HBT$.
\end{thm}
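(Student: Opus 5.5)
We will prove $\HBT$ by using $\ENC^{\omega}$ to replace the uncountable covering by a countable one, and then apply the (effective) Heine-Borel theorem for countable coverings, which is available in $\ACAo$ since $\ACA_{0}$ proves $\WKL_{0}$. Let $\psi:[0,1]\di \R^{+}\cup\{0\}$ be such that every $x\in[0,1]$ lies in some ball $B(y,\psi(y))$. Fix an enumeration $(q_{n})_{n\in \N}$ of the rationals in $[0,1]$ together with an enumeration of rational radii. The idea is that each rational interval that is contained in some ball $B(y,\psi(y))$ should come with a \emph{concrete} witness $y$; this is what $\ENC^{\omega}$ delivers, up to approximation.

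Concretely, for a code $n$ of a rational open interval $J_{n}=(a_{n},b_{n})$ and $k\in\N$, consider the statement that there is $m$ coding a rational interval $(c_{m},d_{m})$ with $|c_m - d_m|<2^{-k}$ such that either $J_n$ is not contained in any $B(y,\psi(y))$ with $y\in(c_m,d_m)$, or there is such a $y$ in $(c_m,d_m)$. To make this a $\Pi$-statement, we instead take the following: for each $n$ with $\overline{J_{n}}\subseteq B(y,\psi(y))$ for some $y$, there is $m$ with a $y$ in the closed rational interval $I_m$ of length $<2^{-k}$, and $(\forall z\in I_m)$ we have \ldots. This form is awkward. The cleaner approach is to apply $\ENC^{\omega}$ to the formula
\[
(\forall n,k)(\exists m)\big[\text{no }y\text{ has }\overline{J_n}\subseteq B(y,\psi(y))\ \vee\ (\exists y\in I_{m})(\overline{J_n}\subseteq B(y,\psi(y)))\big],
\]
with $|I_{m}|<2^{-k}$ and $I_{m}$ nested in $k$. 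The disjuncts are $\Pi$ and $\Sigma$ in higher-order parameters respectively. To remedy this, we use the case distinction ``$n$ is good'' via $m=0$ versus $m>0$: we take $\varphi(n,k,m)\equiv[m=0\wedge(\forall y)\neg(\ldots)]\vee[m>0\wedge(\forall y\notin I_{m-1})\ldots]$. We expect this to be the main obstacle, namely arranging a $\Pi$-formula so that the choice function pins down witnesses, and we will try to do so by choosing $m$ minimal in a suitable sense, using the ``exact'' nature of $\ENC^{\omega}$ on the $\{0,1\}$-valued part (separation) to decide goodness of $n$. $\SIND$ is used here to pass from the nested interval approximations in $k$ to a sequence converging to an actual witness $y_{n}$.

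Given the resulting sequences, we obtain for each good $n$ a real $y_{n}$ with $\overline{J_{n}}\subseteq B(y_{n},\psi(y_{n}))$. Every $x\in[0,1]$ lies in some $B(y,\psi(y))$, which is open, so $x$ lies in some good rational $J_{n}$. Hence the good $J_{n}$ form a countable open cover of $[0,1]$, which exists as a sequence by $(\exists^{2})$ applied to the choice function. By Heine-Borel for countable covers, finitely many $J_{n_{0}},\dots,J_{n_{k}}$ cover $[0,1]$, and then $y_{n_{0}},\dots,y_{n_{k}}$ are as required by $\HBT$.
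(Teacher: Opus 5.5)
There is a genuine gap, located exactly at the step you flag as the main obstacle, and the proposal never resolves it. $\ENC^{\omega}$ only produces \emph{numerical} witnesses for statements $(\forall n)(\exists m)\varphi(n,m)$ with $\varphi$ a $\Pi$-formula. By contrast, ``$J_{n}$ is good'', i.e.\ $(\exists y)(\overline{J_{n}}\subseteq B(y,\psi(y)))$, is a $\Sigma$-statement whose witnesses are reals. Your first displayed formula is therefore not of the form $\ENC^{\omega}$ requires. The suggested repair, read as ``every witness lies in $I_{m-1}$'', makes the antecedent false: if a good $J_{n}$ has witnesses at two far-apart points, no $m$ satisfies either disjunct.

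The $\{0,1\}$-instances do not help either. They yield only separation, not a \emph{decision} of the set of good $n$, and a decision is what you need to form the countable cover ``by $(\exists^{2})$ applied to the choice function''. For suitable $\psi$, goodness can encode arbitrary $\Sigma$-statements. Deciding it therefore amounts to $\Sigma$-comprehension (a fragment of $\BOOT$), which with $(\exists^{2})$ gives $\FIVE$, out of reach of $\ENC^{\omega}$ (cf.\ Theorem~\ref{hunterX}). Finally, nested intervals $I_{m_{k}}$ that each contain a witness would not give a witness in the limit. Since $\psi$ is arbitrary, ``$\overline{J_{n}}\subseteq B(y,\psi(y))$'' is not a closed condition in $y$; for instance, $\psi$ may vanish at the limit point. $\SIND$ plays no role in taking such a limit.

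The missing idea is to apply $\ENC^{\omega}$ to the \emph{negation} of what you want. Then the matrix is automatically a $\Pi$-formula and no real witnesses are ever chosen for infinitely many statements. The paper proves the Lebesgue number lemma this way. If it fails, then $(\forall k)(\exists q,r\in\Q\cap[0,1])\big(|q-r|<2^{-k}\wedge(\forall x\in[0,1])([q,r]\not\subset B(x,\psi(x)))\big)$, which has exactly the form $\ENC^{\omega}$ requires. The midpoints of the resulting intervals have a convergent subsequence by sequential compactness in $\ACA_{0}$. The limit lies in some open ball $B(y_{0},\psi(y_{0}))$, and that ball then contains the late intervals, a contradiction. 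Given a Lebesgue number $2^{-k}$, each of the finitely many intervals $[i/2^{k},(i+1)/2^{k}]$ lies in some ball. $\SIND$ is used only to collect these finitely many centres into a finite sequence, so your Heine--Borel step for countable covers becomes unnecessary.
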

\begin{proof}
Fix $\psi:[0,1]\di \R^{+}\cup\{0\}$ such that $(\forall x\in [0,1])(\exists y\in [0,1])(x\in B(y,\psi(y)))$; 
we shall prove the associated Lebesgue number lemma as in \eqref{lnm} below, which clearly implies $\HBT$ for $\psi$ using $\SIND$:
\be\label{lnm}\textstyle
(\exists k\in \N)(\forall q, r\in \Q\cap [0,1])( |q-r|<\frac{1}{2^{k}}\di (\exists x\in [0,1])( [q, r]   \subset B(x, \psi(x))  ).  
\ee
Suppose the previous formula \eqref{lnm} is false, i.e.\ 
\be\label{lnm2}\textstyle
(\forall k\in \N)(\exists q, r\in \Q\cap [0,1])( |q-r|<\frac{1}{2^{k}}\wedge (\forall x\in [0,1])( [q, r]   \not\subset B(x, \psi(x))  ),
\ee
which has the right syntactical form to apply $\ENC^{\omega}$.  Let $g:\N\di \Q^{2}$ be a witnessing function for \eqref{lnm2} and define the sequence $x_{n}:= \frac{g(n)(1)+g(n)(2)}{2}$.  
Let $h\in \N^{\N}$ be such that $(x_{h(n)})_{n\in \N}$ is a convergent sub-sequence, say with limit $y\in [0,1]$, which is covered by $B(y_{0}, \psi(y_{0}))$ for some $y_{0}\in [0,1]$.   Clearly, $x_{h(n)}$ is eventually in $B(y_{0}, \psi(y_{0}))$, but also $(g(h(n))(1), g(h(n))(2)   )\subseteq B(y_{0}, \psi(y_{0}))$ as $|(g(h(n))(1)- g(h(n))(2)   |<\frac{1}{2^{n}}$.  This is a contradiction and we are done.  

\smallskip

Alternatively, for a more direct proof close to Cousin's original (\cite{cousin1}), suppose $\HBT$ is false for $\psi:[0,1]\di \R^{+}$ and 
consider 
\be\label{frowna}
(\forall n\in \N)(\exists i\leq 2^{n})(\forall w^{1^{*}})\big[ [i/2^{n}, (i+1)/2^{n}]\not\subset \cup_{j<|w|} B(w(j), \psi(w(j)))  \big], 
\ee
where $w^{1^{*}}$ is a variable for finite sequences of reals. 
Apply $\ENC^{\omega}$ to \eqref{frowna} to obtain $g\in \N^{\N}$ with $g(n)=i$ in \eqref{frowna}.  
Define $q_{n}:= \frac{g(n)}{2^{n}}$ and use sequential compactness to obtain a convergent sub-sequence, say with limit $y\in [0,1]$.  
However, $y\in B(y_{0}, \psi(y_{0}))$ for some $y_{0}\in [0,1]$, i.e.\ the latter sub-sequence (and the associated interval provided by \eqref{frowna}) is eventually in the former, a contradiction. 
\end{proof}
It should be noted that Theorem \ref{ENC_to_HBT} only needs `weak $\SIND$', as follows:
\[
(\forall m < m_0) ( \exists x \in \R)  (Y(m,x)=0) \di (\exists f:\N\di \R ) (\forall m<m_0) (Y(m,f(m))=0),
\]
for any $Y:\N\times \R\di \{0,1\}$ and $m_{0}\in \N$, which is called `$\IND_{2}$' in \cite{samBOOK}.  
Moreover, we only need the restriction of $\ENC^\omega$ to the case where upper bounds are available as a function; assuming $\SIND$, this restriction is equivalent to a generalisation of weak K\"onig's lemma due to Kohlenbach, studied in the next section. 

\subsection{Kohlenbach's generalisations of weak K\"onig's lemma}\label{kokolema}
We introduce a generalisation of weak K\"onig's lemma due to Kohlenbach from \cite{kohlenbach4}*{\S5} and connect it to Cousin's lemma as in $\HBT$ from Section \ref{notenoughleeb}.

\smallskip

First of all, Kohlenbach's versions of weak K\"onig's lemma allow for the tree-elementhood-formula `$\sigma\in T$' to be in a certain formula class beyond the quantifier-free.  
We shall study $\Phi_{1}$-$\WKL$ in this section, defined in Principle \ref{desney}.
Our formulation is equivalent to Kohlenbach's original version over $\ACAo$.  
\begin{princ}[$\Phi_{1}$-$\WKL$]\label{desney}
Let $A(\sigma)$ have the form $(\forall g\in 2^{\N})B(g, \sigma)$ with $B$ arithmetical. 
We have
\[
(\forall n\in \N)(\exists f\in 2^{\N})( \forall m\leq n )A(\overline{f}m)\di (\exists f\in 2^{\N})(\forall n\in \N)A(\overline{f}n)
\]
\end{princ}
We shall refer to the previous principle as \emph{Kohlenbach's weak lemma}, which is related to the neighbourhood function principle $\NFP$ (\cite{troeleke1}). 
We also need the following generalisation of $\QFAC^{1, 0}$ from the RM of $\NFP$ (\cite{samhabil, samBOOK}). 
\begin{princ}[$\A_0$]
For $A(\sigma)$ of the form $(\exists g\in 2^{\N})B(g, \sigma)$ with $B$ arithmetical:
\[
 (\forall f\in 2^{\N})(\exists n\in \N)A(\overline{f}n)\di  (\exists G^{2})(\forall f\in 2^{\N})(\exists n\leq G(f))A(\overline{f}n)
\]
\end{princ}
By the results in \cite{dagsamXVII} or \cite{samBOOK}*{Appendix}, $\Z_{2}^{\omega}+\QFAC^{0,1}+\A_{0}$ cannot prove $\NIN_{[0,1]}$ or $\HBU$, i.e.\ assuming $\A_{0}$ does not influence the hardness of the latter.  

\smallskip

Secondly, building on the previous section, we connect Cousin's lemma and Kohlenbach's weak lemma, and hence numerical choice.  
\begin{thm}[$\ACAo+\SIND$]\label{texno}
We have $\HBT\asa \Phi_{1}$-$\WKL\asa [\HBU+\A_{0}]$.
\end{thm}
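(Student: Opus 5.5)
We prove the cycle $\HBT\di \Phi_{1}$-$\WKL\di [\HBU+\A_{0}]\di \HBT$, working in $\ACAo+\SIND$ so that $(\exists^{2})$ is available and arithmetical formulas with higher-order parameters can be evaluated.

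\smallskip

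\emph{Step 1: $\HBT\di\Phi_{1}$-$\WKL$.} Suppose the tree $T=\{\sigma: (\forall m\leq|\sigma|)A(\overline{\sigma}m)\}$ has no path. Identify $2^{\N}$ with the Cantor set $\mathcal{C}\subset[0,1]$. Each $f\in 2^{\N}$ then has a least $n$ with $\neg A(\overline{f}n)$, i.e.\ a witness $g\in 2^{\N}$ with $\neg B(g,\overline{f}n)$.

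We build $\psi:[0,1]\di\R^{+}\cup\{0\}$ so that every point is covered by a ball $B(y,\psi(y))$, with $y$ coding a pair $(\sigma,g)$ that witnesses $\neg A(\sigma)$. The ball $B(y,\psi(y))$ is taken to be a neighbourhood of the cylinder $[\sigma]$ that avoids the other cylinders of the same length. Points of $[0,1]\setminus\mathcal{C}$ are covered by balls centred at suitable points of the complementary open intervals; this is done with $(\exists^{2})$.

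The map $\psi$ is defined using $(\exists^{2})$ alone, since $\neg B(g,\sigma)$ is arithmetical. This is exactly the point where $\HBT$ improves on $\HBU$: we never need to choose $g$ from $f$, because centres with $\psi(y)=0$ are allowed.

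A finite subcover from $\HBT$ then yields finitely many $\sigma_{i}$ outside $T$ whose cylinders cover $2^{\N}$. Taking $n$ beyond all $|\sigma_{i}|$ contradicts the hypothesis of $\Phi_{1}$-$\WKL$ at level $n$. Checking that the ball geometry separates cylinders correctly is routine.

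\smallskip

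\emph{Step 2: $\Phi_{1}$-$\WKL\di\HBU$ and $\Phi_{1}$-$\WKL\di\A_{0}$.} For $\HBU$, assume no finite subcover exists for $\Psi$. Consider the binary tree of dyadic intervals $[i/2^{n},(i+1)/2^{n}]$ that are not covered by finitely many balls, formalised as in \eqref{frowna}. The relevant formula is $(\forall w^{1^{*}})(\dots)$, a $\Pi$-formula of the right form, after coding finite sequences of reals by elements of $2^{\N}$. The hypothesis of $\Phi_{1}$-$\WKL$ holds by $\SIND$ and the assumed failure. The resulting path converges to some $y$, and $B(y,\Psi(y))$ then covers a node of the path, a contradiction.

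For $\A_{0}$, contrapose. If no bound $G$ exists, consider the tree of $\sigma$ along which $\neg A$ holds for all initial segments; this is again of the form $(\forall g)\neg B$. Applying $\Phi_{1}$-$\WKL$ gives an $f$ with $(\forall n)\neg A(\overline{f}n)$, contradicting the hypothesis of $\A_{0}$.

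The delicate part is the case where the tree is infinite; here we argue classically. If the tree is finite, say of height $N$, then $G\equiv N$ works. So, provided the hypothesis of $\A_{0}$ holds, the tree must be finite and the constant $G$ exists; $\SIND$ is used to verify that the tree is finite at a level.

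\smallskip

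\emph{Step 3: $[\HBU+\A_{0}]\di\HBT$.} This is the step I expect to be the main obstacle. Given $\psi$ as in $\HBT$, the statement $(\forall x)(\exists y)(x\in B(y,\psi(y)))$ has the $\Sigma$ shape $(\exists g)B$. After transferring to Cantor space, $\A_{0}$ produces a functional $G$ bounding the depth at which a covering ball appears.

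The plan is to use $G$, together with $(\exists^{2})$, to define $\Psi(x)>0$ such that $B(x,\Psi(x))$ is contained in some $B(y,\psi(y))$, where $y$ is found by an arithmetical search through a dyadic neighbourhood of size $2^{-G(x)}$. The difficulty is that $y$ itself must be extracted, not merely shown to exist. To avoid this, replace each ball $B(x,\Psi(x))$ by a dyadic interval $I_{x}$ that is known to lie inside some $\psi$-ball. Then $\HBU$ applied to $\Psi$ gives finitely many $x_{i}$; for each $i$, $\SIND$ (or $\IND_{2}$) lets us pick $y_{i}$ with $I_{x_{i}}\subset B(y_{i},\psi(y_{i}))$ for the finitely many $i$. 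This yields the finite subcover required by $\HBT$.
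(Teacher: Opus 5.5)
Your cycle $\HBT\di\Phi_{1}$-$\WKL\di[\HBU+\A_{0}]\di\HBT$ works, modulo two details in Step~3, but it is arranged differently from the paper's proof.

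\textbf{Step~1.} This is the paper's argument for $\HBT\di\Phi_{1}$-$\WKL$. The witness $g$ is read off the centre, so $\psi$ needs only $(\exists^{2})$, and $\psi(y)=0$ absorbs the non-witnesses. Your passage to the Cantor set is the more careful form of it. On $[0,1]$ with binary expansions, as in the paper, a ball of radius $2^{-n}$ about a point of a dyadic interval of length $2^{-n}$ also meets the neighbouring intervals. It is exactly the separation of cylinders that lets a finite subcover yield the bound $n_{0}$. Make sure the centre lies strictly inside the interval representing $[\sigma]$, for instance by coding it as $\sigma*\langle 0,1\rangle*g$. Then radius $3^{-|\sigma|}$ covers that interval and misses all other cylinders of length $|\sigma|$.

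\textbf{The other directions in the paper.} The paper proves $\Phi_{1}$-$\WKL\di\HBT$ outright. It applies the contrapositive of $\Phi_{1}$-$\WKL$ to the formula saying that some $\psi$-ball contains a depth-$n$ dyadic neighbourhood of $f$. Your bisection argument would equally give $\HBT$, not only $\HBU$. The paper then closes the loop with $[\HBU+\A_{0}]\di\Phi_{1}$-$\WKL$. It applies $\A_{0}$ to $\neg A$, which already has the required $\Sigma$-shape, then applies $\HBU_{2^{\N}}$ to the resulting $G$, and takes $n_{0}=1+\max_{i\leq k}G(f_{i})$. This extracts only a number, so it needs neither finite choice nor any geometry.

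\textbf{Your Step~3.} Going directly from $[\HBU+\A_{0}]$ to $\HBT$ shows more transparently how $\A_{0}$ turns an $\HBT$-cover into an $\HBU$-cover. The price is two details you should make explicit.
\begin{itemize}
\item The formula you feed to $\A_{0}$ is a $\Sigma$-formula, so $(\exists^{2})$ cannot find the least good depth. Instead use monotonicity in $\sigma$ and take the depth to be exactly $G(\eta(x))$.
\item A dyadic rational $x$ is an endpoint of its interval $I_{x}$, so no ball $B(x,\Psi(x))$ fits inside $I_{x}$. Feed $\A_{0}$ the enlarged interval consisting of $I_{\sigma}$ and its two neighbours, which is still monotone in $\sigma$, and put $\Psi(x)=2^{-G(\eta(x))}$. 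Alternatively, work with $\HBU_{2^{\N}}$, where cylinders are clopen.
\end{itemize}
After that, choosing the centres $y_{i}$ for the finitely many $x_{i}$ via $\SIND$ finishes the argument as you describe.
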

\begin{proof}
Assume $\Phi_{1}$-$\WKL$ and let $\psi:[0,1]\di \R^{+}\cup\{0\}$ satisfy $(\forall x\in [0,1])(\exists y\in [0,1])(x\in B(y, \psi(y)) )$.  
Towards applying the contraposition of $\Phi_{1}$-$\WKL$, consider
\[\textstyle
(\forall f\in 2^{\N})(\exists n\in \N)( \exists g\in 2^{\N} )\big[ (\r(\overline{f}n*00)-\frac{1}{2^{n+1}}, \r(\overline{f}n*00)+\frac{1}{2^{n+1}})\subset B(\r(g), \psi(\r(g)))  \big].
\]
where $\r(f)=\sum_{n=0}^{\infty} \frac{f(n)}{2^{n+1}}$.  Then Kohlenbach's weak lemma provides an $n_{0}\in \N$ with $n\leq n_{0}$ in the previous formula.  
Then there is a finite sub-covering of $\cup_{x\in [0,1]}B(x, \psi(x))$ of at most $2^{n_{0}}$ elements.  To collect the associated reals into a finite sequence (based on the quantifier `$(\exists g\in 2^{\N})$'), use $\SIND$.
Hence, we obtain $\HBT$ while $\A_{0}$ trivially follows.  

\smallskip

Now assume $\HBU$ and let $A$ be as in $\Phi_{1}$-$\WKL$.  
Using standard coding of reals, one readily shows that $\HBU$ is equivalent to $\HBU_{2^{\N}}$ (\cite{dagsamIII}), i.e.\
\[
(\forall G^{2})(\exists f_{0}, \dots, f_{k}\in 2^{\N})( 2^{\N}\subset\cup_{i\leq k}[   \overline{f_{i}}G(f_{i})]  ).
\]
Assume $(\forall f\in 2^{\N})(\exists n\in \N)\neg A(\overline{f}n)$ and apply $\A_{0}$.  In turn, apply $\HBU_{2^{\N}}$ to the resulting $G^{2}$ and put $n_{0}=1+\max_{i\leq k}G(f_{i})$.  
Then $(\forall f\in 2^{\N})(\exists n\leq n_{0})\neg A(\overline{f}n)$, as required for $\Phi_{1}$-$\WKL$.  

\smallskip

Finally, assume $\HBT$ and let $B(f^{1}, \sigma^{0*})$ be as in $\Phi_{1}$-$\WKL$.  
Towards obtaining the contraposition of the latter, define $\psi:[0,1]\di \R^{+}\cup\{0\}$ as:
\[
\psi(x):=
\begin{cases}
\frac{1}{2^{n}} & \begin{array}{l}\textup{if $n$ is the least natural number satisfying} \\ \textup{the formula $\neg B(\eta(x)(n+2)* \eta(x)(n+3)*\dots, \overline{\eta(x)}n)$}\end{array} \\
0 & \textup{otherwise}
\end{cases}
\]
where $\eta:[0,1]\di 2^{\N}$ converts reals to binary, choosing a tail of zeros if necessary.  
Reals with multiple binary representations have computable descriptions, i.e.\ we may assume these cases do not occur.  
Then $\cup_{x\in [0,1]}B(x, \psi(x))$ covers the unit interval and any finite sub-covering readily yields $n_{0}\in \N$ with $(\forall f\in 2^{\N})(\exists n\leq n_{0})\neg A(\overline{f}n)$, as required.  
\end{proof}
The following corollary is a significant improvement of Kohlenbach's negative result regarding $\Phi_{1}$-$\WKL$ from \cite{kohlenbach4}*{Prop.\ 5.11}.
\begin{cor}
The system $\Z_{2}^{\omega}+\QFAC^{0,1}$ cannot prove $\Phi_{1}$-$\WKL$.
\end{cor}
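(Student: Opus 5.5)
By Theorem \ref{texno}, $\Phi_{1}$-$\WKL$ is equivalent to $\HBT$ over $\ACAo+\SIND$, and $\HBT$ implies $\HBU$ over this base theory. So any proof of $\Phi_{1}$-$\WKL$ in $\Z_{2}^{\omega}+\QFAC^{0,1}$ would yield a proof of $\HBU$ in the same system. The plan is therefore to reduce the corollary to the known fact that $\Z_{2}^{\omega}+\QFAC^{0,1}$ does not prove $\HBU$ (\cite{dagsamIII}, see also \cite{samBOOK}).

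The first step is to check that $\Z_{2}^{\omega}+\QFAC^{0,1}$ contains the base theory of Theorem \ref{texno}. Since $\SS_{k}^{2}$ yields $(\exists^{2})$, we have $\ACAo\subseteq \Z_{2}^{\omega}$. The remaining ingredient is $\SIND$, and this is the one delicate point, because $\SIND$ is induction for $\Sigma$-formulas with arbitrary third-order parameters $Y^{2}$. I would handle it by observing that only one direction of Theorem \ref{texno} is needed, namely $\Phi_{1}$-$\WKL\di\HBT$ (or even $\Phi_{1}$-$\WKL\di\HBU$). In that direction, $\SIND$ is used once: to collect the finitely many witnesses $g\in 2^{\N}$ (for the at most $2^{n_{0}}$ basic intervals) into a finite sequence. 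With $\QFAC^{0,1}$ available, this collection step follows directly. Apply $\QFAC^{0,1}$ to the statement ``for each $i<2^{n_{0}}$ there is $g$ with the interval $i$ covered by $B(\r(g),\psi(\r(g)))$.'' Here the matrix is made quantifier-free by using $\exists^{2}$ to decide the arithmetical inclusion, and trivial witnesses are supplied for $i\geq 2^{n_{0}}$. The resulting sequence then restricts to the needed finite list. So $\Z_{2}^{\omega}+\QFAC^{0,1}$ proves $\Phi_{1}$-$\WKL\di\HBU$.

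It remains to invoke the unprovability of $\HBU$ in $\Z_{2}^{\omega}+\QFAC^{0,1}$, established by Normann and Sanders via a model (e.g.\ a countably based model built from the Kleene-computable functionals, or the model $\mathcal{Q}^{*}$) satisfying $\Z_{2}^{\omega}+\QFAC^{0,1}$ in which $\HBU$ fails. Combining this with the previous paragraph, $\Phi_{1}$-$\WKL$ fails in that model, which gives the corollary.

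The main obstacle I expect is the point discussed in the second paragraph: making sure that no hidden use of full $\SIND$ remains in the chosen direction of Theorem \ref{texno}. If $\QFAC^{0,1}$ does not suffice there, the fallback is to verify that $\SIND$ (or the weaker $\IND_{2}$) holds in the specific model witnessing $\neg\HBU$. That model satisfies full induction for its definable predicates, since it is built over the standard naturals $\N$.
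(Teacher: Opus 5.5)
Your proof is correct and follows essentially the paper's route. Both arguments use the direction $\Phi_{1}$-$\WKL\di\HBU$ of Theorem \ref{texno} to reduce to a known independence result; the paper goes one step further, deriving $\NIN_{[0,1]}$ from $\HBU$ via $\Psi(x):=\frac{1}{2^{Y(x)+5}}$ for an injection $Y$ and citing the unprovability of $\NIN_{[0,1]}$, whereas you cite the unprovability of $\HBU$ directly.

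Your attention to $\SIND$ settles a point the paper leaves implicit, and the $\omega$-model argument is the safe way to do it. The $\QFAC^{0,1}$ shortcut needs every one of the $2^{n_{0}}$ level-$n_{0}$ intervals to lie in a single ball, and this follows from the conclusion of $\Phi_{1}$-$\WKL$ only if the intervals in the instance are chosen nested. Otherwise you only know that a $\Sigma$-definable set of strings has witnesses, and choosing witnesses for such a set is exactly what $\SIND$ supplies.
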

\begin{proof}
As noted in Section \ref{ohoam}, the system from the corollary cannot prove $\NIN_{[0,1]}$.  
To show that $\Phi_{1}$-$\WKL$ implies the latter, let $Y:[0,1]\di \N$ be an injection.  
Apply $\HBU$ for $\Psi(x):=\frac{1}{2^{Y(x)+5}}$ and observe we obtain a contradiction. 
\end{proof}
A stronger result actually holds: $\Phi_{1}$-$\WKL$ proves that a countable set of reals can be enumerated.  

\smallskip

Finally, the above equivalences also go through \emph{mutatis mutandis} for the generalisation of $\WWKL$ from Remark \ref{trixie}, which we could dub $\Phi_{1}$-$\WWKL$, 
and Vitali's principle, called $\WHBU$ in \cite{dagsamVI, samBOOK}.  The latter (only) states the existence of finite coverings up to arbitrary small error. 
\subsection{A splitting for numerical choice}
In this section, we answer the following question in the positive.
\begin{center}
\emph{Is there a natural principle $\X$ such that $\ENC^{\omega}\asa [\X+\BNC^{\omega} ]$?}
\end{center}
Such results are called `splittings' and are fairly rare in second-order RM (see \cite{samsplit}).
To answer the previous question, we first generalise Kohlenbach's $\Phi_{1}$-$\WKL$ beyond binary trees, namely as follows.  
\begin{princ}[$\Phi_{1}$-$\KL$]
Let $A(\sigma)$ have the form $(\forall g\in 2^{\N})B(g, \sigma)$ with $B$ arithmetical. 
If $A$ is finitely branching\footnote{We say that $A(\sigma)$ is finitely branching if for any finite sequence $\sigma^{0^{*}}$, we have $A(\sigma)\di (\exists n\in \N)(\forall m\in \N)[A(\sigma*\langle m\rangle)\di m\leq n]$.  This is the same as the second-order definition \cite{simpson2}*{III.7.1}.}, then  
\be\label{KLZ}
(\forall n\in \N)(\exists f\in \N^{\N})( \forall m\leq n )A(\overline{f}m)\di (\exists f\in \N^{\N})(\forall n\in \N)A(\overline{f}n)
\ee
\end{princ}
\noindent 
We shall refer to the previous as `Kohlenbach's lemma'.  

\smallskip

We now have the following theorem that `splits' $\ENC^{\omega}$ into two natural parts.
\begin{thm}[$\ACAo+\SIND$]
We have  $\ENC^{\omega}\asa [\BNC^{\omega}+\Phi_{1}\textup{-}\KL]$.
\end{thm}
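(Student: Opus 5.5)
We prove the two directions separately, working in $\ACAo+\SIND$ throughout.

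\emph{Forward direction.} Clearly $\ENC^{\omega}\di\BNC^{\omega}$, since an exact choice function $g$ also serves as a bound. To derive $\Phi_{1}$-$\KL$ from $\ENC^{\omega}$, let $A(\sigma)\equiv(\forall g\in 2^{\N})B(g,\sigma)$ be finitely branching, and assume the antecedent of \eqref{KLZ}. Put
\[
A^{*}(\sigma)\equiv (\forall n\in \N)(\exists f\in\N^{\N})\big(\overline{f}|\sigma|=\sigma\wedge(\forall m\leq n)A(\overline{f}m)\big),
\]
which says that $\sigma$ has extensions in the tree of every length. The usual proof of K\"onig's lemma shows that for every $\sigma$ with $A^{*}(\sigma)$ there is $k$ with $A^{*}(\sigma*\langle k\rangle)$. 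The argument goes as follows. Finite branching gives a bound $N$ on the immediate successors of $\sigma$. If no $k\leq N$ worked, then for each $k\leq N$ we would get a length $n_{k}$ beyond which no extension of $\sigma*\langle k\rangle$ survives. Taking the maximum of the $n_{k}$, which needs $\SIND$ to collect finitely many witnesses, contradicts $A^{*}(\sigma)$.

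The obstacle is that $A^{*}$ is not $\Pi_{1}^{1}$, so $\ENC^{\omega}$ does not apply to it directly. I would first apply $\ENC^{\omega}$ to the finite-branching hypothesis itself. The statement ``$m\leq n$ for all $m$ with $A(\sigma*\langle m\rangle)$'' has the form $(\forall m)(\forall g)[\dots]$, so the hypothesis is $(\forall\sigma)(\exists n)\Pi_{1}^{1}$, after adjusting to make it hold also when $\neg A(\sigma)$. This yields a bounding function $b(\sigma)$. With $b$ in hand, the quantifier $(\exists f\in\N^{\N})$ in $A^{*}$ ranges over a $b$-bounded tree, which is compact. Hence $A^{*}(\sigma)$ becomes equivalent, via the $\SIND$ argument above, to a $\Pi_{1}^{1}$ statement, namely that every finite bounded level contains a node satisfying $A$. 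The remaining case is then reduced to $\Phi_{1}$-$\WKL$-style reasoning. Concretely, apply $\ENC^{\omega}$ to $(\forall\sigma)(\exists k)[A^{*}(\sigma)\di A^{*}(\sigma*\langle k\rangle)]$, now in $\Pi_{1}^{1}$ form, and iterate the resulting choice function starting from $\langle\rangle$ to obtain the path $f$.

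\emph{Reverse direction.} Assume $\BNC^{\omega}+\Phi_{1}$-$\KL$ and $(\forall n)(\exists m)\varphi(n,m)$ with $\varphi\in\Pi_{1}^{1}$. Apply $\BNC^{\omega}$ to get $g$ with $(\forall n)(\exists m\leq g(n))\varphi(n,m)$. Define $A(\sigma)\equiv(\forall i<|\sigma|)(\sigma(i)\leq g(i)\wedge\varphi(i,\sigma(i)))$. This has the form required in $\Phi_{1}$-$\KL$, since a finite conjunction of $\Pi_{1}^{1}$ formulas can be merged into one universal quantifier over $2^{\N}$ using pairing, with $\exists^{2}$ handling the arithmetical matrix. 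It is finitely branching with bound $g(|\sigma|)$.

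For each $n$ we need $f$ with $(\forall m\leq n)A(\overline{f}m)$, that is, witnesses $m_{i}\leq g(i)$ with $\varphi(i,m_{i})$ for all $i<n$. This is proved by induction on $n$ for the formula $(\exists\sigma)(|\sigma|=n\wedge A(\sigma))$. That formula is $\Pi_{1}^{1}$ up to a bounded search over $\sigma\leq$ the bound given by $g$, so it is covered by $\SIND$ in its negated form. $\Phi_{1}$-$\KL$ then gives $f$ with $A(\overline{f}n)$ for all $n$, and hence $\varphi(n,f(n))$ for all $n$, which is $\ENC^{\omega}$.

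The main obstacle is the forward step of expressing $A^{*}$ in $\Pi_{1}^{1}$ form, which makes essential use of the bound $b$ supplied by $\ENC^{\omega}$ together with $\SIND$.
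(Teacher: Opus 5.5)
Your reverse direction is correct and is essentially the paper's argument. You apply $\BNC^{\omega}$, form the $g$-bounded predicate $A(\sigma)$ of partial choice functions, obtain the antecedent of \eqref{KLZ} by induction, and read the exact choice function off the path supplied by $\Phi_{1}$-$\KL$. The forward direction, however, has a genuine gap: both of your applications of $\ENC^{\omega}$ are to formulas that are not $\Pi_{1}^{1}$.

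\emph{The bounding function.} In the finite-branching hypothesis, $A(\sigma*\langle m\rangle)$ occurs in antecedent position. So $(\forall m)[A(\sigma*\langle m\rangle)\di m\leq n]$ is $(\forall m)(\exists g\in 2^{\N})[\neg B(g,\sigma*\langle m\rangle)\vee m\leq n]$. Your adjustment for $\neg A(\sigma)$ adds a further existential ($\Sigma$) disjunct. This is a $\forall m\,\exists g$ statement, not $\Pi_{1}^{1}$, so $\ENC^{\omega}$ does not give you the bounding function $b$.

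\emph{The iteration step.} Even granting $b$, the matrix of $(\forall\sigma)(\exists k)[A^{*}(\sigma)\di A^{*}(\sigma*\langle k\rangle)]$ is $\neg A^{*}(\sigma)\vee A^{*}(\sigma*\langle k\rangle)$, which is again outside the scope of $\ENC^{\omega}$. You cannot drop the hypothesis $A^{*}(\sigma)$, because $(\exists k)A^{*}(\sigma*\langle k\rangle)$ fails at non-extendible nodes. Iterating from $\langle\rangle$ without a choice function that is correct exactly at extendible nodes would require deciding a $\Pi_{1}^{1}$ predicate, i.e.\ something like $\SS^{2}$.

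The missing idea is that you need neither an extendibility predicate nor a branching bound. The antecedent of \eqref{KLZ} is already of the form $(\forall n)(\exists\sigma)\,\Pi_{1}^{1}$. Only $\overline{f}n$ matters, so replace $f$ by a coded finite sequence $\sigma$ with $|\sigma|\geq n$. The bounded quantifier $(\forall m\leq n)$ commutes with the universal quantifier over $2^{\N}$.

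Apply $\ENC^{\omega}$ to this to get $n\mapsto\sigma_{n}$ such that $A$ holds of every initial segment of every $\sigma_{n}$. Let $T$ be the set of initial segments of the $\sigma_{n}$. It is an ordinary second-order tree, existing by $(\exists^{2})$. It is infinite, and it is finitely branching simply because every node of $T$ satisfies $A$ and $A$ is finitely branching. Ordinary K\"onig's lemma, provable in $\ACA_{0}$, then yields a path $f$ through $T$, whence $A(\overline{f}n)$ for all $n$. This is the paper's proof, and it uses $\ENC^{\omega}$ only once, on a formula that genuinely has the required syntactic form.
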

\begin{proof}
For the forward implication, let $A, B$ be as in $\Phi_{1}$-$\KL$ and apply $\ENC^{\omega}$ to 
\[
(\forall n\in \N)(\exists \sigma^{0^{*}}) ( \forall m\leq |\sigma| )A(\overline{\sigma}m)
\]
and let $g:\N\di \N^{<\N}$ be the resulting witnessing function.
Define a (usual second-order) tree by $\sigma \in T\asa (\exists n,m\in \N)( \overline{\sigma}n =\overline{g(m)} n)$.
Now apply the usual K\"onig's lemma to $T$, which is infinite and finitely branching by definition.  
The resulting path $f\in \N^{\N}$ in $T$ is such that $(\forall n\in \N)A(\overline{f}n)$, as required for \eqref{KLZ}. 

\smallskip

For the reverse implication, let $\varphi$ be arithmetical and such that $(\forall n\in \N)(\exists m\in \N)( \forall f\in 2^{\N})\varphi(f, m, n)$. 
Let $g\in \N^{\N}$ be the function provided by $\BNC^{\omega}$.  Now consider the formula $A(\sigma)$ as follows: 
\be\label{klaz}
(\forall i<|\sigma|)[\sigma(i)\leq g(i)\wedge ( \forall f\in 2^{\N})\varphi(f, \sigma(i), i) ].
\ee
By the induction provided, we obtain the antecedent of \eqref{KLZ}, while the finitely branching property follows from the bound provided by $g$. 
By $\Phi_{1}$-$\KL$, there is $h\in \N^{\N}$ with $(\forall n\in \N)A(\overline{h}n)$.  In light of \eqref{klaz}, $h$ yields the witnessing function required by $\ENC^{\omega}$, and we are done.
\end{proof}
A number of variations are possible, which we list in the following remark.
\begin{rem}\rm
Let $\ENC_{b}^{\omega}$ be the restriction of $\ENC^{\omega}$ to antecedents of the form $(\forall n\in \N)(\exists m\leq h(m))\varphi(n,m)$.
Let $\ENC_{\fin}^{\omega}$ be the restriction of $\ENC^{\omega}$ to antecedents of the form $(\forall n\in \N)(\exists m\in \N)\varphi(n,m)$ where 
there are at most finitely many (i.e. boundedly many) such $m\in \N$ for any fixed $n\in \N$.
Then it is straightforward to prove that 
\[
\ENC^{\omega}\asa  [\ENC_b^\omega + \BNC^\omega] \textup{ and }  \ENC^\omega\di  \ENC_\fin^\omega \di \ENC_b^\omega \di  \SSEP.
\]
Assuming $\SIND$, we also have $\ENC_{b}^{\omega}\asa \Phi_{1}$-$\WKL$.  Moreover, assuming the full axiom of induction, we have $\Phi_{1}$-$\KL\asa  \ENC_{\fin}^{\omega}$.   
Hence, \eqref{coolsplit} follows and we could replace $\HBT$ by e.g.\ the Lindel\"of lemma for the real line.  We note that Kohlenbach mentions $\Phi_{1}$-$\WKL\di \ENC^{\omega}$ in the proof of \cite{kohlenbach4}*{Prop.\ 5.11}. 
\end{rem}
Next, Kohlenbach also introduces $\Psi_{1}$-$\WKL$ in \cite{kohlenbach4}*{\S5}, which is $\Phi_{1}$-$\WKL$ with `$(\forall g\in 2^{\N})$' replaced by `$(\exists g \in 2^{\N})$' everywhere.  
In contrast to the latter, the former is rather weak, provable as it is in higher-order $\SAC$.  
\begin{thm}
The system $\ACAo+\QFAC^{0,1}$ proves $\Psi_{1}$-$\WKL$.
\end{thm}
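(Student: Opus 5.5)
The idea is to prove $\Psi_{1}$-$\WKL$ in contrapositive form, as in the proof of Theorem \ref{texno}, and to use $\QFAC^{0,1}$ to pull all the existential witnesses $g\in 2^{\N}$ into a single object. Let $A(\sigma)\equiv(\exists g\in 2^{\N})B(g,\sigma)$ with $B$ arithmetical, and assume the antecedent
\[
(\forall n\in \N)(\exists f\in 2^{\N})(\forall m\leq n)(\exists g\in 2^{\N})B(g,\overline{f}m).
\]
By $(\exists^{2})$, the arithmetical formula $B$ is decided by a term, so $B(g,\sigma)$ is equivalent to a quantifier-free formula in $E$. The inner $(\forall m\leq n)(\exists g)$ can be turned into a single $(\exists g)$: code the finitely many witnesses $g_{0},\dots,g_{n}$ into one $g\in 2^{\N}$ via $g(\langle i,k\rangle)=g_{i}(k)$. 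This collection of finitely many witnesses needs only bounded choice over $n$. Since $\SIND$ is not assumed, I would instead first apply $\QFAC^{0,1}$ to $(\forall \sigma)[A(\sigma)\vee\neg A(\sigma)]$ in the form $(\forall\sigma)(\exists g)[B(g,\sigma)\vee \neg(\exists g')B(g',\sigma)]$. This does not directly work, however, because $\neg(\exists g')B$ is not arithmetical.

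The better route avoids this problem. Define the tree
\[
T:=\{\sigma\in 2^{<\N}: (\forall m\leq|\sigma|)A(\overline{\sigma}m)\}.
\]
For each $\sigma$, $A(\sigma)$ is $\Sigma_{1}^{1}$ with higher-order parameter $E$, and the antecedent says that $T$ has nodes at every level. Apply $\QFAC^{0,1}$ to the antecedent, with all witnesses bundled into one function: for each $n$ there is $h\in\N^{\N}$ coding $f$ together with witnesses $g_{m}$ for $m\leq n$ such that $B(g_{m},\overline{f}m)$ holds. The matrix is arithmetical, hence quantifier-free relative to $E$. This yields a sequence $(f_{n},(g_{n,m})_{m\leq n})_{n\in\N}$.

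Now define the second-order tree $T'=\{\sigma:(\exists n\geq|\sigma|)(\overline{f_{n}}|\sigma|=\sigma)\}$. It exists by $(\exists^{2})$, is infinite, and every node $\sigma$ of length $k$ extends to $f_{n}$ with $n\geq k$, so $A(\overline{\sigma}m)$ holds for all $m\leq k$. Weak K\"onig's lemma, which is available in $\ACA_{0}$, gives a path $f$ through $T'$. Then every initial segment $\overline{f}m$ lies in $T'$, and hence satisfies $A$ by the witnesses attached to $f_{n}$. So $(\forall n)A(\overline{f}n)$, as required.

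The main obstacle I expect is checking that $A$ is preserved along the path. For this it is essential that $A(\sigma)$ depends only on $\sigma$ and not on which $f_{n}$ is extended, which is the case. So the witness from any $f_{n}$ extending $\overline{f}m$ suffices. I would also need to confirm that the formula to which $\QFAC^{0,1}$ is applied is genuinely quantifier-free in $E$ after Kleene normal form, which should be routine in $\ACAo$.
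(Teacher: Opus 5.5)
Your proof is correct and is essentially the paper's argument: use $\QFAC^{0,1}$ to collect the finitely many witnesses $g_m$ ($m\leq n$), apply $\QFAC^{0,1}$ again to get a sequence of approximations $(f_n)_{n\in\N}$, build a second-order binary tree from that sequence, and take a path by $\WKL$. The one step you leave unjustified after your abandoned first attempt is the bundling of the witnesses, and it does not need $\SIND$. As in the paper, for fixed $n$ and $f$ apply $\QFAC^{0,1}$ to $(\forall m\in\N)(\exists g\in 2^{\N})[m\leq n\di B(g,\overline{f}m)]$, whose matrix is quantifier-free in $E$.
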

\begin{proof}
Let $A, B$ as in $\Psi_{1}$-$\WKL$ be such that $(\forall n_{0}\in \N)(\exists f\in 2^{\N})(\forall n\leq n_{0})\neg A(\overline{f}n)$, 
where $\neg A(\sigma)$ is $(\exists g\in 2^{\N})B(\sigma, g)$.  Apply $\QFAC^{0,1}$ to the innermost formula to obtain a sequence $(g_{n})_{n\in \N}$ with
\be\label{netto}
(\forall n_{0}\in \N)\big(\exists \sigma\in 2^{<\N} \big)\big[|\sigma|=n_{0} \wedge  (\exists (g_{n})_{n\in \N})(\forall n\leq n_{0})\neg B (\overline{\sigma}n, g_{n})\big].
\ee
Apply $\QFAC^{0,1}$ once more to obtain a sequence $(\sigma_{n})_{n\in \N}$ such that $\sigma_{n_{0}}=\sigma$ for all $n_{0}\in \N$ in \eqref{netto}.  
Define a (second-order) infinite binary tree with this sequence and apply $\WKL$.  The resulting path $f_{0}\in 2^{\N}$ satisfies $(\forall n\in \N)\neg A(\overline{f_{0}}n)  $.
\end{proof}
One readily generalises $\Psi_{1}$-$\WKL$ to $\Psi_{1}$-$\KL$; the latter also follows from $\QFAC^{0,1}$. 
Since both versions of K\"onig's lemma follow from $(\exists^{3})$, a reversal is out of the question.  
We now discuss equivalences for $\Phi_{1}$-$\KL$ and related principles.
\begin{rem}\label{wassenaar}\rm
First of all, second-order K\"onig's lemma and Ramsey's theorem are equivalent, with direct proofs (not via $\ACA_{0}$) available (\cites{simpson2, moflo}).  
Assuming $\ACAo+\QFAC^{0,1}$, the same proofs yield an equivalence between $\Phi_{1}$-$\KL$ and Ramsey's theorem for $\Pi$-classes $X\subset \N$; the latter are such that `$n\in X$' is given by  $(\forall g\in 2^{\N})(Y(g, n)=0)$ for some $Y^{2}$.  
The crux is that the construction from \cite{moflo} can be done using $\QFAC^{0,1}$ if the homogenous sub-class is given as a sequence. 

\smallskip

Secondly, the principle $\ADS$ is well-known from the RM-zoo (\cite{damirzoo, dsliceke}) and states that in a countable linear ordering, there is either a descending or ascending sequence.  
Let $\Pi$-$\ADS$ be the former generalised to $\Pi$-classes.  One readily shows that this principle follows from Ramsey's theorem for two colours and pairs from $\Pi$-classes, if the homogenous sub-class is given as a sequence.  

\smallskip

Thirdly, we consider a generalisation of the countable Heine-Borel theorem as follows.  Fix arithmetical $B(\sigma^{0^{*}}, g)$ and define a closed $\Pi$-class $C\subset 2^{\N}$ by letting `$f\in C$' be short for $ (\forall n\in \N, g\in 2^{\N})B(\overline{f}n, g)$.
Then a sequence $(\sigma_{n})_{n\in \N}$ in $2^{<\N}$ covers $C$ in case $(\forall f\in 2^{\N})(\exists n\in \N)( f\in C\di f\in [\sigma_{n}] )$. 
Then $\Phi_{1}$-$\WKL$ implies that there is $n_{0}\in \N$ such that $(\sigma_{n})_{n\leq n_{0}}$ covers $C$.  The reversal is also easy.   
\end{rem}
The results in Remark \ref{wassenaar} are not particularly natural or surprising: they are statements that $\Pi$-classes have certain nice sub-sequences (rather than nice sub-classes).  
The equivalence $\Phi_{1}$-$\WKL\asa\HBT$ from Theorem \ref{texno} is different in kind in that no $\Pi$-classes are mentioned in $\HBT$ at all.  

\subsection{Open sets and their representations}
We connect numerical choice to the coding principle $\open$ from \cite{dagsamVII}, which simply expresses that open sets of reals are given by second-order codes.  

\smallskip

First of all, we introduce two fragments of $\open$.  To this end, we need the definition of R2-open set from \cite{dagsamVII}, which is a fairly strong representation. 
\bdefi
A set $O\subset \R$ is \emph{R2-open} if there is $Y:\R\di \R^{+}\cup\{0\}$ such that $x\in O\asa Y(x)>0$ and $B(x,Y(x) )\subset O$ for all $x\in \R$. 
\edefi
The following fragments, also studied in \cites{samBOOK, dagsamVII}, are needed for the below. 
\begin{defi}[Two fragments of $\open$]~
\begin{itemize}
\item $\open^{\dagger}$\textup{:} an open set $O\subset \R$ is $\bf F_{\sigma}$, i.e.\ there is an increasing sequence $(C_{n})_{n\in \N}$ of closed sets such that $O=\cup_{n\in \N}C_{n}$
\item $\open^{-}$\textup{:} an R2-open $O\subset \R$ is RM-open, i.e.\ there are sequences of reals $(a_{n})_{n\in \N}$ and $(b_{n})_{n\in \N}$ such that $O=\cup_{n\in \N}(a_{n}, b_{n})$. 
\end{itemize}
\end{defi}
We stress that `$\bf F_{\sigma}$' refers to the usual definition, i.e.\ not involving RM-codes.  

\smallskip

Secondly, we have the following theorem. 
Note that $\open^{\dagger}$ is weak in that it does not\footnote{Observe that $\neg \NIN_{[0,1]}$ implies $\open^{\dagger}$, as in the former case \emph{all} sets are countable and hence trivially $\bf F_{\sigma}$.  Thus, if $\open^{\dagger}$ implies $\NIN_{[0,1]}$ in say $\ACAo$, we can prove $\NIN_{[0,1]}$ in the latter system by invoking the law of excluded middle for $\open^{\dagger}$, a contradiction.} imply $\NIN_{[0,1]}$.  
Observe that sub-item (c.2) is an instance of numerical choice restricted to formulas about closed sets. 
\begin{thm}[$\ACAo+\QFAC^{0,1}$]\label{finalt} 
The higher items imply the lower ones.
\begin{enumerate}
\renewcommand{\theenumi}{\alph{enumi}}
\item The combination of the following items. 
 \begin{itemize}
\item[(a.1)] The principle $\open^{\dagger}$.  
\item[(a.2)] Numerical choice as in $\ENC^{\omega}$
\end{itemize}
\item The combination of the following items. 
 \begin{itemize}
\item[(b.1)] The principle $\open^{\dagger}$.
\item[(b.2)] Numerical choice as in $\BNC^{\omega}$
\item[(b.3)] The principle $\open^{-}$. 
\end{itemize}
\item The combination of the following items. \label{xexs}
 \begin{itemize}
\item[(c.1)] The principle $\open^{\dagger}$.
\item[(c.2)] Let $(C_{n})_{n\in \N}$, $(D_{n})_{n\in \N}$ be non-empty closed sets in $[0,1]$ with $(\forall n\in \N)(C_{n}\cap D_{n}= \emptyset)$.  There is $g\in \N^{\N}$ with $(\forall n\in \N)( d(C_{n}, D_{n})>\frac{1}{2^{g(n)}})$.
\item[(c.3)] The principle $\open^{-}$. 
\end{itemize}
\item The coding principle $\open$\textup{:} every open set $O\subset \R$ is RM-open.\label{xexs2} 
\end{enumerate}
\end{thm}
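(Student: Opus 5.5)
We prove the three implications (a)$\di$(b), (b)$\di$(c) and (c)$\di$(d) separately; item (c.1) is simply carried along.

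\emph{(a)$\di$(b).} First, $\ENC^{\omega}$ implies $\BNC^{\omega}$ trivially: an exact choice function $g$ also satisfies $(\exists m\leq g(n))\varphi(n,m)$. It remains to derive $\open^{-}$ from $\open^{\dagger}+\ENC^{\omega}$. Take an R2-open $O$ with witness $Y$. For each rational interval $(q,r)$ we want to decide whether $[q,r]\subset O$. Using $Y$ and $\exists^{2}$, the statement $(\forall x\in[q,r])(Y(x)>0)$ is a $\Pi$-formula, i.e.\ it has the form $(\forall f\in 2^{\N})$(arithmetical).

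Consider the formula
\[
(\forall n\in\N)(\exists m\in\{0,1\})\big[(m=1\di [q_n,r_n]\subset O)\wedge(m=0\di [q_n,r_n]\not\subset O)\big].
\]
The negative conjunct is $\Sigma$ rather than $\Pi$, so this does not immediately fit the form required by $\ENC^{\omega}$. Instead we bring in $\open^{\dagger}$. Write $O=\cup_n C_n$ with $(C_n)$ increasing and closed. Then $[q,r]\subset O$ is witnessed by a least $k$ such that $[q,r]\subset C_k\cup\{\text{points of }C_k\text{'s neighbourhood}\}$. Finding such a $k$ is still awkward, so we take a simpler route: apply $\ENC^{\omega}$ to
\[
(\forall x\in O\cap\Q\text{-indexed data})\ldots,
\]
or, more directly, to
\[
(\forall n)(\exists m)(\forall x\in[q_n,r_n])\big(x\in O\vee m=0\big),
\]
which likewise yields only a trivial witness. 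The usable version uses $Y$: the set $\{(q,r):[q,r]\subset O\}$ is then $\Pi$, and $O=\cup\{(q,r):[q,r]\subset O\}$ by openness. So we want an enumeration of a $\Pi$-class of naturals. Apply $\ENC^{\omega}$ to
\[
(\forall n)(\exists m)\big[(m=0)\vee(m>0\wedge [q_{m-1},r_{m-1}]\subset O\wedge\ldots)\big],
\]
and pick out members by testing, via (c.2)-style separation from $\open^{\dagger}$, the closed sets $C_k$ against $[q,r]$. The precise encoding here is the main obstacle, and the first thing to try is to use the $\bf F_\sigma$ decomposition to turn ``$[q,r]\subset O$'' into ``$\exists k$ (closed condition)'', so that choosing $k$ is a $\Pi$ numerical choice.

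\emph{(b)$\di$(c).} Only (c.2) needs proof. Disjoint closed sets $C_n,D_n\subset[0,1]$ are at positive distance: argue by contradiction using $\QFAC^{0,1}$ and sequential compactness to produce a common limit point, which must lie in both sets. This gives
\[
(\forall n)(\exists k)(\forall x\in C_n,y\in D_n)\big(|x-y|>2^{-k}\big),
\]
which is a $\Pi$-formula via characteristic functions and $\exists^{2}$. Applying $\BNC^{\omega}$ produces a bound $g(n)$, and the required property persists for any larger $k$, so $g$ itself witnesses (c.2).

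\emph{(c)$\di$(d).} Let $O$ be open. By (c.1) write $O=\cup_n C_n$ with $(C_n)$ increasing and closed. Fix an enumeration $(q_i,r_i)$ of rational pairs and take $n$ and $i$ with $[q_i,r_i]$ meeting $C_n$ at suitable points. Applying (c.2) to $C_n\cap[-j,j]$ and $O^{c}\cap[-j,j]$ (rescaled to $[0,1]$, with trivial cases handled separately) yields moduli $g(n,j)$ with $d(C_n\cap[-j,j],O^{c})>2^{-g(n,j)}$. Then $2^{-g(n,j)}$ serves as an R2-radius on $C_n\cap[-j,j]$; setting $Y(x)$ to be $2^{-g(n,j)}$ for the least suitable $n$, computed with $\exists^{2}$, makes $O$ R2-open. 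Finally (c.3), i.e.\ $\open^{-}$, shows that $O$ is RM-open.
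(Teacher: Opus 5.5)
\textbf{Overall.} Your arguments for (b)$\di$(c) and (c)$\di$(d) are essentially the paper's. For (c.2), you argue by contradiction using $\QFAC^{0,1}$ and sequential compactness, then apply $\BNC^{\omega}$. For (d), the $\mathbf{F}_{\sigma}$-decomposition plus the moduli from (c.2) give an R2-representation, which (c.3) turns into an RM-code. One small point there: empty sets in (c.2) cannot be handled by a case split uniformly in $n,j$, since $\exists^{2}$ does not decide emptiness of a closed set. Instead, after rescaling, pad both sets with fixed far-apart points. Also separate $C_{n}\cap[-j,j]$ from $O^{c}\cap[-j-1,j+1]$ rather than $O^{c}\cap[-j,j]$, so that the radius is valid near $\pm j$.

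\textbf{The gap.} The genuine gap is (a)$\di$(b.3): you never derive $\open^{-}$, and the proposal stops at what you call the main obstacle.
\begin{itemize}
\item Your exact-decision formula contains the $\Sigma$-conjunct $[q_{n},r_{n}]\not\subset O$, so it is not of the form $\ENC^{\omega}$ accepts.
\item Your other encodings are satisfied by the trivial witness $m=0$.
\item The $\mathbf{F}_{\sigma}$ idea is false as stated: $[q,r]\subset O=\bigcup_{k}C_{k}$ does not give $[q,r]\subset C_{k}$ for any single $k$. Take $O=(-1,2)$, $C_{k}=[-1+2^{-k},-2^{-k}]\cup\{0\}\cup[2^{-k},2-2^{-k}]$ and $[q,r]=[0,1]$.
\item More fundamentally, $\open^{\dagger}$ plus numerical choice only yields an R2-representation, and converting that into an RM-code is exactly what $\open^{-}$ asserts.
\end{itemize}

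\textbf{The paper's fix.} The paper supplies the missing ingredient through compactness. $\ENC^{\omega}$ implies $\HBT$, hence $\HBU$, by Theorem~\ref{ENC_to_HBT} via the Lebesgue number lemma; the weak induction used there follows from $\QFAC^{0,1}$. Then $\HBU$ implies that R2-open sets are RM-open, by the Normann--Sanders result on open sets quoted in the proof. Note that $\open^{\dagger}$ plays no role in this step.

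\textbf{Repairing your own approach.} Your idea can also be made to work by separating instead of deciding. For $n\in\N$ and a dyadic $p=j2^{-n-2}$, consider the two $\Pi$-statements $B(p,2^{-n-1})\subset O$ and $(\forall y\in\R)(|y-p|\leq 2^{-n-2}\di Y(y)<2^{-n})$. At least one of them holds: a counterexample $y$ to the second gives $B(p,2^{-n-1})\subset B(y,Y(y))\subset O$. Apply $\ENC^{\omega}$ with $m\in\{0,1\}$ to choose a true one for each $(n,p)$. The balls $B(p,2^{-n-1})$ for which the first statement was chosen lie in $O$. They also cover $O$: if $x\in O$ with $Y(x)\geq 2^{-n}$, then at the grid point $p$ nearest to $x$ the second statement fails. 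So the first was chosen there, and $x\in B(p,2^{-n-1})$.
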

Items \eqref{xexs} and \eqref{xexs2} are equivalent.  
\begin{proof}
We work in the unit interval and observe that all results readily generalise to the reals.  
Let $O$ be open and let $(F_{n})_{n\in \N}$ be a sequence of closed sets such that $O=\cup_{n\in \N}F_{n}$ and $F_{n}\subseteq F_{n+1}$ for all $n\in \N$.
Let $C$ be the complement of $O$ and note that $C$ and each $F_{n}$ are disjoint.  In fact, we have 
\be\label{texnico}\textstyle
(\forall n\in \N)(\exists k\in \N)(\forall x\in F_{n}, y\in C)(|x-y|>\frac{1}{2^{k}})
\ee
which readily follows via the obvious proof-by-contradiction (using $\QFAC^{0,1}$).  Now apply item (c.2) to obtain $g\in \N^{\N}$ such that $g(n)\geq k$ in \eqref{texnico}. 
Next, define $Y:[0,1]\di \R$ as follows:  $Y(x):=0$ if $x\in C$ and $Y(x)=\frac{1}{2^{h(x)+1}}$ where $h(x):= g((\mu n)( x\not\in F_{n+1}  )-1)$. 
By definition, we have $B(x, Y(x))\subset O$ if $x\in O$, i.e.\ $Y$ is an R2-representation of $O$. 
Note that $\HBU$ implies that R2-open sets are RM-open by \cite{dagsamVII}*{Theorem 7.8}, while the former follows from $\ENC^{\omega}$ by Theorem \ref{ENC_to_HBT}.
Hence, all downward implications now follow.  

\smallskip

For the reversal, the sub-items of item \eqref{xexs} are provable in $\ACAo$ for RM-codes.  Indeed, the distance function $d(x, C)$ exists for RM-closed sets $C\subset \R$ by \cite{withgusto}*{Theorem 1.2} and the usual second-order proofs then go through.  For instance, to show that an RM-closed set $C\subset [0,1]$ is $\bf G_{\delta}$, define the open set $G_{n}:= \{x\in [0,1] : x\in C \vee d(x, C)<\frac{1}{2^{n}}\}$ and observe that $C=\cap_{n\in \N}G_{n}$.  For item (c.2), RM-closed sets are \emph{separably closed} in $\ACA_{0}$ (\cite{withgusto}), where the latter means that a dense sub-\emph{sequence} is given.  
Hence, the formula $(\forall n\in \N)(\exists m\in \N)(d(C_{n}, D_{n})>\frac{1}{2^{m}})$ is equivalent to $\L_{2}$-arithmetical. 
\end{proof}
Assuming the weak $\open^{\dagger}$, numerical choice as in $\BNC^{\omega}$ even implies $\BCT_{[0,1]}$, the Baire category theorem for the unit interval, a `Big' system by \cite{samBIG2}. 
\begin{cor}[$\ACAo$]\label{tankpilk}
$[\BNC^{\omega}+\open^{\dagger}]\di\BCT_{[0,1]}$, where the latter states that for a decreasing sequence of dense open sets in $[0,1]$, the intersection is non-empty. 
\end{cor}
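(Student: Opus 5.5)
We work in $\ACAo$ and plan to show the Baire category theorem for $[0,1]$ directly, using $\open^{\dagger}$ to turn each dense open set into a union of closed sets and $\BNC^{\omega}$ to obtain uniform witnesses. Let $(O_{n})_{n\in \N}$ be a decreasing sequence of dense open sets in $[0,1]$. First apply $\open^{\dagger}$ to each $O_{n}$, obtaining increasing closed sets $(F_{n,k})_{k\in \N}$ with $O_{n}=\cup_{k}F_{n,k}$. Since we need this for the whole sequence at once, we would rather apply $\open^{\dagger}$ to a single open set coding the sequence. For instance, take $O:=\{x\in \R: x\in O_{\lfloor x\rfloor}\text{-shifted}\}$ on disjoint unit intervals $[2n,2n+1]$, so that the closed sets of that one representation restrict to the required sequences.

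Next, we aim for an instance of numerical choice that yields a \emph{code} for each $O_{n}$ on rational intervals. For every $n$ and every rational interval $(a,b)\subset[0,1]$, density and openness give a point of $O_{n}$ in $(a,b)$, and hence some $k$ and a rational closed subinterval $[c,d]\subset(a,b)$ with $[c,d]\subseteq F_{n,k}$ up to interior. More precisely, we use the formula
\[\textstyle
(\forall n\in\N,\,a<b\in\Q)(\exists \langle k,c,d\rangle)\big(a<c<d<b \wedge (\forall x\in[c,d])(x\in O_{n})\big).
\]
This formula has the $\Pi$-form (a universal real quantifier over a $(\exists^{2})$-decidable matrix), so $\BNC^{\omega}$ applies. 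It gives a bound $g(n,a,b)$ on the codes, and $(\exists^{2})$ then lets us search below the bound for the actual witness $\langle c,d\rangle$. The search is arithmetical in $g$, because ``$[c,d]\subseteq O_{n}$'' is decidable by $\exists^{2}$ relative to a quantifier over $\R$. Here $\open^{\dagger}$ enters: $[c,d]\subseteq O_{n}$ should be replaced by $[c,d]\subseteq F_{n,k}$, whose complement is open, so the check reduces to rationals plus the closed-set structure and becomes $(\exists^{2})$-decidable.

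Finally, with the witness function in hand, we run the usual nested-interval construction. Set $I_{0}:=[0,1]$, and let $I_{n+1}$ be the witnessed rational closed subinterval of the interior of $I_{n}$ lying inside $O_{n}$, shrunk to halve its length. Since the witnesses are a function of the rational data, the sequence is defined by primitive recursion from $g$ and $\exists^{2}$. Its intersection point lies in every $O_{n}$, so $\cap_{n}O_{n}\neq\emptyset$.

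The main obstacle is step two: making the matrix of the numerical choice instance genuinely of the required $\Pi$-form, and making the bounded search decidable. Without codes, ``$[c,d]\subseteq O_{n}$'' quantifies over all reals in a set given only by a characteristic function. We would first try to use $\open^{\dagger}$ together with the Lebesgue-number style argument of Theorem~\ref{finalt} (the distance bound \eqref{texnico}) to reduce containment to rational data. If that fails, we would fall back on using only the bound: choose among the finitely many candidates $\le g$ by searching for one whose closure avoids the closed complement. Such a candidate is guaranteed to exist, and we would pick the least one via $\exists^{2}$ applied to a sequence of rationals dense in $[c,d]$.
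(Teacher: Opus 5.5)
Your second step does not go through, and it is where the whole proof lives. $\BNC^{\omega}$ only tells you that some code $\langle k,c,d\rangle\le g(n,a,b)$ satisfies $(\forall x\in[c,d])(x\in O_{n})$. To pick one you would have to decide that condition, which is a $\Pi^{1}_{1}$-statement in the type-two parameter $F_{O_{n}}$, and $\exists^{2}$ only decides number quantifiers.

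Neither of your repairs works. Replacing $O_{n}$ by $F_{n,k}$ does make the test $\exists^{2}$-decidable: for closed $F$ and $c<d$, $[c,d]\subseteq F$ iff all rationals of $[c,d]$ lie in $F$. But then the antecedent you feed to $\BNC^{\omega}$ says that inside every $(a,b)$ some $F_{n,k}$ contains an interval. That is, a countable closed cover of an interval has a member with non-empty interior, which is a form of the Baire category theorem itself, so the step is circular. If instead you keep $O_{n}$ in the choice instance and only search with $F_{n,k}$, nothing guarantees that any candidate below the bound passes.

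Your fallback tests the wrong thing. Checking that the rationals of $[c,d]$ avoid the closed set $O_{n}^{c}$ only yields $[c,d]\cap\Q\subseteq O_{n}$. For $O_{n}=[0,1]\setminus\{\alpha\}$ with $\alpha$ irrational, every candidate passes, and your nested intervals may well shrink to $\alpha$.

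The missing idea is to apply numerical choice to a formula that is \emph{monotone} in the numerical witness. Then the bound is itself a witness, and no search through a $\Pi^{1}_{1}$ matrix is needed.

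This is what the paper does. Write $O=\cup_{n}F_{n}$ via $\open^{\dagger}$; this gives \eqref{texnico}. A separation $2^{-k}$ between $F_{n}$ and $O^{c}$ persists for all larger $k$, so the $\BNC^{\omega}$-bound $g(n)$ is itself a valid exponent. Hence $Y(x):=2^{-g(n)-1}$, for the least $n$ with $x\in F_{n}$ (found using $\exists^{2}$), is an R2-representation of $O$. For R2-open sets, $\BCT_{[0,1]}$ is the usual constructive nested-interval proof in $\ACAo$. In the current rational interval, search for a rational $q\in O_{n}$, which exists because $O_{n}$ is dense and open. Then shrink into $B(q,Y_{n}(q))$.

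Note that this construction only ever uses radii at rational centres. So the smallest repair of your own outline is to apply $\BNC^{\omega}$ to the monotone formula $q\in O_{n}\di(\forall x\in[0,1])(|x-q|<2^{-m}\di x\in O_{n})$, indexed by $n$ and rational $q$. This gives $B(q,2^{-g(n,q)})\subseteq O_{n}$ directly, and that route does not even use $\open^{\dagger}$.
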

\begin{proof}
By the proof of Theorem \ref{finalt}, $\BNC^{\omega}+\open^{\dagger}$ implies that every open set of reals is R2-open.
However, $\BCT_{[0,1]}$ is provable in $\ACAo$ for R2-open sets, using the usual constructive proof from \cite{bish1}*{p.~87} (see also \cite{samBOOK}*{D.4.1} or \cite{dagsamVII}*{\S7.3}).  
\end{proof}
Finally, Theorem \ref{finaility} connects $\open$, numerical choice, and Lindel\"of's lemma (\cite{blindeloef}) in an elegant way.  
The `open choice' principle $\OC^{0,0}$ was first studied in \cite{dagsamXVI} and clearly follows from $\BNC^{\omega}$. 
\begin{princ}[${\OC}^{0,0}$] 
For any increasing sequence of open sets $(O_{n})_{n\in \N}$ in $\R$, we have the following:
\[
(\forall n\in \N)(\exists m\in \N)([-n, n]\subset O_{m})\di (\exists g\in \N^{\N})(\forall n\in \N)([-n, n]\subset O_{g(n)}).
\]
\end{princ}
We conjecture that $\open$ does not imply $\HBU$, say over $\Z_{2}^{\omega}+\QFAC^{0,1}$. 
\begin{thm}[$\ACAo+\QFAC^{0,1}$]\label{finaility}
The higher items imply the lower ones.  
\begin{enumerate}
\renewcommand{\theenumi}{\alph{enumi}}
\item The combination of $\ENC^{\omega}$ and $\open^{\dagger}$.\label{vla1}
\item The combination of $\BNC^{\omega}$, $\HBU$, and $\open^{\dagger}$.\label{vla}
\item The combination of the following. \label{vlb}
\begin{itemize}
\item  \(Lindel\"of\) For any closed set $C\subset \R$ and any function $\Psi:\R\di \R^{+}$, there is a sequence $(x_{n})_{n\in \N}$ in $C$ such that $C\subset \cup_{n\in \N}B(x_{n}, \Psi(x_{n}))$. 
\item The principle $\OC^{0,0}$. 
\end{itemize}
\item Item \eqref{vlb} restricted to continuous $\Psi$.\label{vlc}
\item The coding principle $\open$. 
\end{enumerate}
\end{thm}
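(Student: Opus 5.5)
We will prove the chain (a)$\di$(b)$\di$(c)$\di$(d)$\di$(e) one implication at a time, reusing the proofs of Theorems \ref{ENC_to_HBT} and \ref{finalt}.

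\emph{(a) implies (b).} Trivially $\ENC^{\omega}\di\BNC^{\omega}$, since an exact witness is a bound. For $\HBU$ we use Theorem \ref{ENC_to_HBT}. The second proof given there works by applying $\ENC^{\omega}$ to \eqref{frowna} and then using sequential compactness. It needs at most the weak induction mentioned after that theorem, and this follows from $\QFAC^{0,1}$. So $\HBT$, and hence $\HBU$, follows, while $\open^{\dagger}$ is carried along unchanged.

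\emph{(b) implies (c).} First note that $\OC^{0,0}$ is an instance of $\BNC^{\omega}$. The formula $[-n,n]\subset O_{m}$ is a universal real quantifier over a third-order parameter, hence of the required syntactic form. A bound $m\le g(n)$ can be replaced by $g(n)$ itself, because the sequence $(O_{n})_{n\in\N}$ is increasing.

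For the Lindel\"of item, let $C$ be closed and $\Psi:\R\di\R^{+}$. By the proof of Theorem \ref{finalt}, $\BNC^{\omega}+\open^{\dagger}$ makes every open set R2-open, and $\HBU$ makes R2-open sets RM-open by \cite{dagsamVII}*{Theorem 7.8}. Hence $C^{c}$ is RM-open and $C$ is RM-closed, so $\ACAo$ gives a dense sequence in $C$ (separably closed, \cite{withgusto}). To cover $C$, we cover each compact piece $C\cap[-n,n]$ using $\HBU$ applied on $[-n,n]$. On $C^{c}$ we put $\Psi:=$ the distance to $C$, which is available via the RM-code. This yields finitely many centres for each $n$, but some of them may lie outside $C$.

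To force centres into $C$, use $\HBU$ with radius $\Psi(x)/2$ restricted to $C$, and replace each centre by a point of $C$ close to it. Alternatively, apply $\HBU$ on the RM-closed set $C\cap[-n,n]$ directly. Collecting the finite lists over $n$ needs $\QFAC^{0,1}$. I expect this step to be the main obstacle: the centres must be genuine points of $C$ with their own $\Psi$-balls covering. The cleanest route is to use the Heine-Borel property for RM-closed subsets of $[-n,n]$, which follows from $\HBU$.

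\emph{(c) implies (d)} is immediate, since (d) is a restriction of (c).

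\emph{(d) implies (e).} Let $O$ be open and set $C:=O^{c}$. Take continuous $\Psi$ and use Lindel\"of to obtain a sequence $(x_{n})_{n\in\N}$ in $C$; this should give a countable dense subset of $C$. A natural candidate is $\Psi\equiv 1/2^{k}$ for each $k$, combined via $\QFAC^{0,1}$. That gives a sequence $(x_{n,k})$ such that every point of $C$ is within $1/2^{k}$ of some $x_{n,k}$, so $C$ is separably closed.

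Then $x\in O$ iff $d(x,C)>0$, where $d(x,C)=\inf_{n,k}|x-x_{n,k}|$ is definable with $(\exists^{2})$. The set $\{x: d(x,C)>0\}$ is RM-open via rational intervals, which handles (e) when $C\neq\emptyset$. If $C$ is empty, $O=\R$ and the claim is trivial; and $\OC^{0,0}$ is not needed for this step.
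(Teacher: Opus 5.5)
\textbf{The gap is in (d)$\Rightarrow$(e).} There you combine the Lindel\"of sequences for $\Psi\equiv 2^{-k}$ over all $k$ ``via $\QFAC^{0,1}$''. That axiom only applies to quantifier-free matrices, or arithmetical ones modulo $(\exists^{2})$. What you need of the $k$-th sequence, however, is $(\forall x\in \R)(x\in C\di (\exists n\in\N)(|x-x_{n,k}|<2^{-k}))$. This is an unbounded quantifier over reals involving the arbitrary closed set $C$. The functional $\exists^{2}$ cannot remove it. Nor can you reduce it to an arithmetical statement through a code for $C$, since producing such a code is exactly what (e) asks for.

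One application of Lindel\"of at a single scale does not give density, so all scales must be built into one instance. This is what the paper's construction does:
\begin{itemize}
\item Reduce to $C\subseteq[0,1]$.
\item Let $D\subset\R$ consist of the copies $C+2k$ on $[2k,2k+1]$.
\item Let $\Psi$ be $2^{-k}$ on $[2k,2k+1]$ and linear in between.
\end{itemize}
Distinct copies are at distance at least $1$, while $\Psi\leq 1$ on $D$. Hence a point of the $k$-th copy can only be covered by a centre in the same copy, i.e.\ by a ball of radius $2^{-k}$. Translating the centres back yields a dense sequence in $C$ from a single application.

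With this repair your ending is correct, and it is shorter than the paper's. Given a dense sequence in $C$, $(\exists^{2})$ directly yields an RM-code for $O$, so $\OC^{0,0}$ is indeed superfluous for this step. The paper instead extracts finite $2^{-k}$-nets via the countable Heine-Borel theorem and uses $\OC^{0,0}$ to obtain them as a sequence.

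\textbf{In (b)$\Rightarrow$(c), the ``main obstacle'' you identify is not one.} The fix you suggest would also fail: replacing an outside centre by a nearby point $c\in C$ does not work, since $\Psi(c)$ may be arbitrarily small. Your own construction already handles the issue. For $x\notin C$ the radius $d(x,C)$ gives $B(x,d(x,C))\cap C=\emptyset$. So the centres in $C$ of the finite cover of $[-n,n]$ already cover $C\cap[-n,n]$, and you simply discard the others.

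This is exactly the paper's argument. The paper uses the R2-representation $Y_{O}$ of $O=\R\setminus C$ in place of $d(\cdot,C)$, so it needs neither RM-codes nor the distance function at this point. Note that here, unlike in (d)$\Rightarrow$(e), $\QFAC^{0,1}$ is legitimate. Whether a finite list of reals $w$ covers $[-n,n]$, for a radius function defined everywhere, is arithmetical in the finitely many endpoints $w(j)\pm\Psi_{0}(w(j))$. It is therefore quantifier-free modulo $(\exists^{2})$.

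Your treatment of (a)$\Rightarrow$(b) and (c)$\Rightarrow$(d) is fine. So is your derivation of $\OC^{0,0}$ directly from $\BNC^{\omega}$, using that the $O_{m}$ are increasing.
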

\begin{proof}
We recall that $\open^{-}$ follows from $\HBU$ by \cite{dagsamVII}*{Theorem 7.8}. 
The first downward implication follows from Theorem \ref{ENC_to_HBT}.
Assume item \eqref{vla} and fix closed $C\subset \R$ and any $\Psi:\R\di \R^{+}$.
By the proof of Theorem~\ref{finalt}, open sets now come with an R2-representation.  
Let $Y_{O}:\R\di \R$ be an R2-representation of $O:=\R\setminus C$.  
Define $\Psi_{0}:\R\di \R^{+}$ as follows: $\Psi_{0}(x):=\Psi(x)$ if $x\in C$ and as $Y_{O}(x)$ otherwise.  
Now, $\HBU$ implies item \eqref{vlb} for $C=\R$ assuming $\QFAC^{0,1}$ (to put all finite sub-coverings in a sequence).  
Apply the latter for $\Psi_{0}$ and observe that the first sub-item of item~\eqref{vlb} immediately follows. 
The second sub-item is readily proved in $\ACAo$ for RM-codes while item \eqref{vla} implies $\open$ by (the proof of) Theorem \ref{finalt}.  

\smallskip

Next, assume item \eqref{vlc} and observe that $\open$ restricted to the unit interval readily implies $\open$.  
In this light, fix closed $C\subset [0,1]$, and let $D\subset \R$ be set obtained by placing copies of $C$ on each $[2n, 2n+1]$.  
Define $\Psi(x):=\frac{1}{2^{n}}$ on $[2n, 2n+1]$ and linear on the other intervals.  
Now apply Lindel\"of's lemma for $D$ and $\Psi$ and let $(x_{m})_{m\in \N}$ be the associated sequence.  
Then $\cup_{m\in \N}B(x_{m}, \frac{1}{2^{n}})$ covers the copy of $C$ on $[2n, 2n+1]$.  The countable Heine-Borel theorem (provable using $\QFAC^{0,1}$ and the obvious proof-by-contradiction) implies 
there is a finite sub-covering.  By \cite{dagsamXIV}*{Theorem 12}, $\OC^{0,0}$ implies the sequential version of this version of Heine-Borel theorem.  
The sequence of these finite sub-coverings yields a code for $O$. 
\end{proof}
The following theorem identifies item (c.2) from Theorem \ref{finalt} as the `strong' sub-item as it still implies $\ATR_{0}$. 
\begin{thm}[$\ACAo$]
Item \textup{(c.2)} from Theorem \ref{finalt} implies $\ATR_{0}$.
\end{thm}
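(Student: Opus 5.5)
Working in $\ACAo$, I will derive $\BNC$ (the second-order version) from item (c.2) and then conclude $\ATR_{0}$ via Theorem \ref{hofff}.

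Fix an arithmetical $\phi$ with only second-order parameters such that $(\forall n)(\exists m)(\forall X\subseteq\N)\phi(X,n,m)$. I first want a coding by closed sets with the following property: for each $n$, a closed set $C_{n}\subseteq[0,1]$ encodes the failures of $\phi$, and a second closed set $D_{n}$ accumulates toward $C_{n}$ only along witnesses $m$ that fail. To set this up, identify $2^{\N}$ with the Cantor set $\mathcal{C}$ via $\r$. For each $m$, consider the set $E_{n,m}$ of $X\in 2^{\N}$ with $\neg\phi(X,n,m)$; this set is arithmetical and hence given by a characteristic function via $(\exists^{2})$. However, it is not closed in general, so a Kleene normal form trick is needed, as in the proof of Theorem \ref{coredesigner}. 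Write $\neg\phi$ as $(\exists Y)$ of a $\Pi^{0}_{1}$ matrix by adding Skolem functions, where $Y\in2^{\N}$ codes the witnesses for the arithmetical quantifiers. Then
\[
P_{n,m}:=\{\langle X,Y\rangle\in 2^{\N}: \theta(X,Y,n,m)\}
\]
is a $\Pi^{0}_{1}$, hence closed, subset of Cantor space, and it is nonempty iff $\neg(\forall X)\phi(X,n,m)$.

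Next I define the pair of sets. Put $C_{n}:=\{0\}$ and let $D_{n}$ be the union of scaled copies $2^{-m-1}+2^{-m-2}\cdot\r[P_{n,m}]$ placed in disjoint intervals $[2^{-m-1},2^{-m-1}+2^{-m-2}]$, together with the point $1$ so that $D_{n}$ is nonempty. Both are sets given by characteristic functions via $(\exists^{2})$. Closedness of $D_{n}$ holds because the copies lie in disjoint intervals shrinking to $0$, and $0\notin D_{n}$ forces all but finitely many copies to be empty. This is exactly where the hypothesis enters: there is some $m_{0}$ with $P_{n,m_{0}}=\emptyset$. The trouble is that failures may occur for infinitely many $m$, which would make $0$ a limit point of $D_{n}$. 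To repair this, I will instead place copies only for $m<\mu$, where $\mu$ is the least $m$ with $(\forall X)\phi$. This makes ``copy $m$ present'' depend on all $m'<m$ failing, i.e.\ I use $\bigcap_{m'\le m}(\ldots)$ via a product coding: point $m$ carries a tuple of witnesses of failure for every $m'\le m$. The resulting set is closed (a finite product of $\Pi^{0}_{1}$ classes), and only finitely many copies are nonempty, so $0\notin\overline{D_{n}}$ and $C_{n}\cap D_{n}=\emptyset$. Closedness near $0$ in the sense of Definition \ref{char} can be checked pointwise using $\ACAo$.

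Apply (c.2) to obtain $g$ with $d(C_{n},D_{n})>2^{-g(n)}$. Then no copy with index $m\ge g(n)$ is nonempty, so the least good $m$ is at most $g(n)$, which yields $\BNC$. Theorem \ref{hofff} then gives $\ATR_{0}$; note that its proof only uses $\BNC$ for arithmetical matrices, so the restriction is harmless. The main obstacle is verifying, in $\ACAo$ and without $\QFAC^{0,1}$, that $D_{n}$ is closed and disjoint from $C_{n}$. The key points are that the Skolemised $\Pi^{0}_{1}$ presentation makes each copy topologically closed, and that the cumulative ``all earlier $m$ fail'' construction keeps the number of nonempty copies finite. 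Both facts are provable using sequential compactness of Cantor space in $\ACA_{0}$.
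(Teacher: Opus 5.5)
\textbf{Verdict: genuine gap.} The outer shape of your argument works. You take $C_n=\{0\}$ and build $D_n$ from copies accumulating at $0$, indexed cumulatively so that only finitely many copies are non-empty, and read the bound off $d(C_n,D_n)>2^{-g(n)}$. But the step everything rests on fails. For general arithmetical $\phi$ there is no $\Pi^0_1$ matrix $\theta$ with $\neg\phi(X,n,m)\leftrightarrow(\exists Y\in 2^{\N})\theta(X,Y,n,m)$, and $P_{n,m}$ is not non-empty iff $(\exists X)\neg\phi(X,n,m)$.

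\begin{itemize}
\item \emph{Why Skolemisation fails here.} Skolem witnesses for number quantifiers are unbounded naturals, so $Y$ must range over $\N^{\N}$, not $2^{\N}$.
\item \emph{Compactness kills the Cantor-space version.} In $\ACA_{0}$, which has $\WKL$, $P_{n,m}\neq\emptyset$ holds iff the binary tree of strings not yet refuted by $\theta$ is infinite. That is a $\Pi^0_1$ condition on $(n,m)$.
\item \emph{Consequence.} Your construction only captures instances where $(\forall X)\phi$ is equivalent to a $\Sigma^0_1$ formula. Those instances of $\BNC$ are already provable in $\ACA_0$, far below $\ATR_0$. For the same reason, even $(\exists k)(X(k)=1)$ is not of your form, since projections of closed subsets of Cantor space are closed.
\item \emph{Baire space does not rescue it.} If you let $Y$ range over $\N^{\N}$, the image in $[0,1]$ of a closed subset of Baire space is not closed. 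Membership in its closure is $\Sigma^1_1$, so $(\exists^{2})$ does not provide that closure as a set in $\ACAo$, and you cannot feed it to (c.2).
\end{itemize}

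\textbf{What is missing: the uniqueness trick.} The paper does not aim at full $\BNC$. It derives item \eqref{kuhl} of Theorem \ref{hofff}, which isolates the one instance the $\BNC\to\ATR_{0}$ argument needs. In that instance the second-order witness $X$, the jump hierarchy along $\prec$ below $a$, is unique and is recognised by an arithmetical test. Hence $f$ is non-zero at most once per unit interval. The region under its graph over $[n,n+1]$ is then, up to a trivial closure, a segment plus at most one vertical segment, so it is closed and available via $(\exists^{2})$. After a one-point compactification and coding into $[0,1]$, let $D_n$ be the point at infinity; a lower bound on $d(C_n,D_n)$ then bounds $f$.

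\textbf{How to repair your construction.} Replace $P_{n,m}$ by the set of $X$ that are the hierarchy below $a$ and for which machine $e$ halts on oracle $(Z,X)$ but not within $m$ steps.
\begin{itemize}
\item Such a set has at most one element, so it is trivially closed.
\item It is decreasing in $m$, so your product coding becomes unnecessary.
\item It is empty from some $m$ on. This is the hypothesis established in $\ACA_0$ in the proof of Theorem \ref{hofff} by comparing hierarchies.
\end{itemize}
With this change you obtain $\BNC$ for exactly the instance that proof uses, which suffices for $\ATR_{0}$.
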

\begin{proof}
We derive item \eqref{kuhl} from Theorem \ref{hofff} and hence $\ATR_0$.  
Let $f:\R\di \R$ be as in the former item \eqref{kuhl} and consider the set 
\[
A_{n}:=\{ (x, y)\in \R: x\in [n, n+1]\wedge 0\leq y\leq f(x)   \}  .
\]  
Now perform a one-point compactification, and then use reasonable coding (or a space-filling curve) to transfer $A_{n}$ to $[0,1]$.  Then take the closure of the resulting set, finally yielding a closed set $C_{n}$.  
We let $D_{n}$ be the singleton point that was the point at infinity (before coding).  
A lower bound on the closeness of $C_{n}$ and $D_{n}$ gives an upper bound for $f$, as required. 
\end{proof}
In conclusion, $\open$ is a surprisingly strong natural third-order principle not provable in $\Z_{2}^{\omega}+\QFAC^{0,1}$. 
By Theorem \ref{finalt}, $\ENC^{\omega}$ actually implies $\open$, assuming the rather weak principle $\open^{\dagger}$, i.e.\ that open sets are $\bf F_{\sigma}$. 
Moreover, numerical choice and $\open$ now seem similar, if not intimately related, in light of the former.

\section{Conclusion}\label{conculs}
In this final section, we provide a graphical overview with some foundational discussion (Section \ref{grafo}), and suggestions for future research (Section \ref{dasfutur}).

\subsection{Future research}\label{dasfutur}
We briefly discuss possible future research. 

\smallskip

First of all, we wish to generalise the conservation results in Theorem \ref{hunterX} to other principles, including the following:
\begin{itemize}
\item existence of arbitrary $\mathbf{\Delta^1_1}$ (i.e. Borel, assuming $\ATR_0$ ) sets of reals,
\item $\ENC$ and $\BNC$ generalised to $(\forall x\in \R)(\exists n\in \N)\varphi(x, n)$ with $\varphi\in \Pi_{1}^{1}$,
\item the generalisation of $\WWKL$ from Remark \ref{trixie}.
\end{itemize}
Secondly, we wish to explore the connection between $\BNC^{\omega}$ and e.g.\ the statement that countable sets of reals have measure zero.
The same for the theorems mentioned in Remarks \ref{bipoli} and \ref{doooonggg}.

\smallskip

Thirdly, we wish to generalise our results from metric spaces to (second-countable) topological spaces (\cite{samHYP, samHARD}).   
Since the latter are more general than the former, can we use more basic properties?

\smallskip

Fourth, the RM of $\WKL_{0}$, $\ACA_{0}$, and $\ATR_{0}$ should yield equivalences for $\Phi_{1}$-$\WKL$ and $\Phi_{1}$-$\KL$ following Remark \ref{wassenaar}.  
While the examples in the latter are somewhat natural, results not mentioning $\Pi$-classes are preferrable.  

\smallskip

Fifth, the splitting of $\ENC^{\omega}$ in \eqref{coolsplit} is quite pleasing and we would welcome similar results, especially for the other principles from Figure \ref{xxp}. 

\smallskip

Sixth, what is the role of the principle $\open^{\dagger}$ in RM?  Is it somehow connected to the Baire category theorem or Tao's pigeon hole principle for measure (\cite{samBIG2})?

\subsection{A graphical overview}\label{grafo}
The diagram in Figure \ref{xxp} provides an overview of the relations among the principles that have been studied the most in the above.  
Proofs may be found in the relevant references and \cite{samBOOK}. 

\smallskip

The higher-order principles of a given row plus $\ACAo$ are conservative extensions of the systems in the final column.  
Implications sometimes take place assuming extra induction or choice. 
\begin{figure}[H]
\begin{tikzpicture}
  \matrix (m) [matrix of math nodes,row sep=3em,column sep=4em,minimum width=2em]
  {  
  ~ & \BOOT  & ~ &  ~&\qquad\qquad\FIVE \\
  \open & \cocode_{0}  & \HBT & \ENC^{\omega} &\BNC^{\omega}\quad  \ATR_{0} \\
    \PHP_{[0,1]} & \NIN_{[0,1]}  & \BCT_{[0,1]} & ~& \qquad\qquad\ACA_{0} \\            
        };
  \path[-stealth]
     (m-2-4) edge[bend right=15] node [above] {$(+\open^{\dagger})$} (m-2-1)
    (m-1-2) edge node [left] {} (m-2-1)
    (m-1-2) edge[bend left=20] node [left] {} (m-2-4)
    (m-2-4) edge node [left] {} (m-2-3)
    (m-2-3) edge node [left] {} (m-2-2)
    (m-2-1) edge node [left] {} (m-2-2)
    (m-1-2) edge node [left] {} (m-2-2)
    (m-2-4) edge node [left] {} (m-3-3)
    (m-3-3) edge node [left] {} (m-3-2)
    (m-3-1) edge node [left] {} (m-3-2)
    (m-2-1) edge node [left] {} (m-3-1)
    (m-2-1) edge node [left] {} (m-3-3)
    (m-2-2) edge node [left] {} (m-3-2)
    (m-2-4) edge node [left] {} (m-3-1)
        (m-2-5) edge node [left] {} (m-3-2)
                            (m-2-5) edge node [below] {$(+\open^{\dagger})$} (m-3-3)
        (m-2-4) edge node [left] {} (m-2-5)
;
\end{tikzpicture}
\caption{Some relations among our principles}
\label{xxp}
\end{figure}
\noindent
The various principles from Figure \ref{xxp} are defined as in the following list.
\begin{itemize}
\item $\BOOT$ (Feferman's projection principle, \cite{littlefef}): comprehension for $\Sigma$-formulas. 
\item $\open$ (\cite{dagsamVII}): an open set of reals can be written as a countable union of open intervals, 
\item $\cocode_{0}$: a countable set of reals can be enumerated as a sequence. 
\item $\PHP_{[0,1]}$ (Tao's pigeon hole principle for measure, \cite{samBIG2, taomes}): for a sequence of closed sets of reals of measure zero, the union also has measure zero. 
\item $\BCT_{[0,1]}$ (Baire category theorem, \cite{samBIG2}): for a sequence of open and dense sets of reals, the intersection is non-empty. 
\item $\open^{\dagger}$: an open set of reals is $\bf F_{\sigma}$. 
\end{itemize}
We also discuss some foundational matters as follows. 
\begin{rem}[Foundational musings]\rm
In higher-order RM, there are at least two key parameters that govern logical strength.  
\begin{itemize}  
\item[(A)] The complexity of the sets of reals that are guaranteed to exist
\item[(B)] Given a set of reals $X$, the complexity relative to $X$ of reals that are guaranteed to exist.  
\end{itemize}
The system $\Z_{2}^{\omega}$ is strong regarding (A) and very weak regarding (B), hence yielding unprovability of $\NIN_{[0,1]}$ and the like.  
Adding strength to both parameters (A) and (B) can lead to what we call explosive results, like $\ACAo+\HBU$ implying $\ATR_{0}$ while both $\ACAo$ and $\RCAo+\HBU$ do not exceed $\ACA_{0}$ qua second-order theorems  --  or  from Theorem $\ref{slif}$ , $\FIVE^{\omega}+\ENC^{\omega}$ proves $\SIX$ .   

\smallskip

Now principles like $\BNC^{\omega}$ and $\ENC^{\omega}$ are part of the RM of (B).  
Having both (A) and (B) (plus weakness in (B)) prevents third-order RM from having a small number of systems to which most theorems are equivalent.  In addition, even with strong (A), descriptive properties of the most complicated sets of reals included in a model can vary widely, hence relationships often form a complex mesh, like $[C \wedge D] \asa E$, instead of just a natural linear progression in complexity strengths.
\end{rem}

\begin{ack}\rm
We thank Anil Nerode for pushing us in the direction of the necessary and sufficient conditions for Riemann integrability, which was the starting point of this paper.  
Parts of this paper are motivated by the Habilitation of the first author at TU Darmstadt (\cite{samhabil}).
 We thank Ulrich Kohlenbach for his guidance in this regard. 
\end{ack}

\end{document}